\def\del  {\partial}
\def\eps{\varepsilon}
\def\R{\mathbb{R}}
\def\N{\mathbb{N}}
\def\dt{{\rm d}t}
 \def\dx{{\rm d}x}
 \def\dy{{\rm d}y}
\def\ds{{\rm d}s}
\theoremstyle{plain}
\def\del  {\partial}
\def\eps{\varepsilon}
\def\R{\mathbb{R}}
\def\N{\mathbb{N}}
\def\dt{{\rm d}t}
 \def\dx{{\rm d}x}
 \def\dy{{\rm d}y}
\def\ds{{\rm d}s}
\newtheorem{assumption}{Assumption}
\newtheorem{proposition}{\textbf{Proposition}}
\newtheorem{conjecture}{\textbf{Conjecture}}
\newtheorem{corollary}{\textbf{Corollary}}
\newtheorem{remark}{\textbf{Remark}}
\newtheorem{theorem}{\textbf{Theorem}}
\newtheorem{lemma}{\textbf{Lemma}}
\newtheorem{definition}{\textbf{Definition}}
\newtheorem{example}[theorem]{Example}
\author{
  {\normalsize Adrien Dekkers}\thanks{CentraleSup\'elec, Universit\'e Paris-Saclay, France.}
  \and
  {\normalsize Anna Rozanova-Pierrat}\thanks{CentraleSup\'elec, Universit\'e Paris-Saclay, France
    (correspondence, anna.rozanova-pierrat@centralesupelec.fr).}
	\and
	{\normalsize Alexander Teplyaev}\thanks{University of Connecticut, USA
(correspondence, teplyaev@uconn.edu).}
  		}
\title{Mixed boundary valued problem for linear and nonlinear wave equations in domains with fractal boundaries}
\date{}
\begin{document}
\maketitle
\thispagestyle{fancy}

\begin{abstract}
\noindent The weak well-posedness, with the mixed
boundary conditions, of the strongly damped linear wave equation and of the non
linear Westervelt equation is proved in a large natural class of Sobolev admissible non-smooth domains. In the framework of uniform domains in $\R^2$ or $\R^3$ we also validate the approximation of the solution of the Westervelt
equation on a fractal domain by the solutions on the prefractals using the Mosco convergence of the corresponding variational forms.  
\end{abstract}

\begin{keywords}
Fractals; Wave equation; Westervelt equation; Quasilinear second-order hyperbolic equations; Mosco convergence.
\end{keywords}

\section{Introduction}
We study the weak well-posedness of wave equations, such as the strongly damped wave equation and the nonlinear Westervelt equation, in the largest possible class of bounded domains with the mixed boundary conditions. This class of domains contains the irregular case of boundaries, including fractals.
 
 Actually the Westervelt equation is a well-known model~\cite{Westervelt} of non-linear acoustics,
 \begin{equation}\label{EqWestIn}
 \partial^2_t \phi-c^2\Delta \phi-\nu \Delta\partial_t \phi=\alpha \del_t \phi \partial^2_t \phi+f
 \end{equation}
 describing the nonlinear propagation of acoustical waves in the thermo-viscous media~\cite{Roz1,DEKKERS-2020-3,DEKKERS-2020-1}, for instance of the ultrasounds. Here, $c>0$ is the sound speed in the unperturbed homogeneous medium, $\nu$ is the viscosity of the medium (a strictly positive constant) and $\alpha>0$ is a nonlinearity constant. 
 The model comes from the compressible Navier-Stokes system by small perturbations of a constant medium state in the assumptions that the motion is potential and the viscosity properties of the medium are small (of the same order as the perturbations of the density, the pressure, and velocity). Typically, the unknown function $\phi$ in~\eqref{EqWestIn} is the velocity potential: $\mathbf{v}=-\nabla \phi$, but in this article, we consider its time derivative, allowing us to rewrite the nonlinear term in a more convenient for the mathematical analysis form:
 \begin{equation}\label{EqWest}
 \partial^2_t u-c^2\Delta u-\nu \Delta\partial_t u=\alpha u \partial^2_t u+\alpha (\partial_t u)^2+f.
 \end{equation}
 So, this time the velocity is related to solutions of~\eqref{EqWest} by the formula $\del_t \mathbf{v}=-\nabla u$.
 
 Irregular shapes, different to $C^2$-boundaries, are common to various geometry observed in nature~\cite{MANDELBROT-1983} and one of their typical models are fractal boundaries. Von Koch-like structures appear in nature as in the famous example~\cite{coast} of the coast of Britain.
 There are many other appearances of fractal domains in mathematics and 
physics, including 
the following papers most relevant to our work: 
\cite{SapovalGobron,BARDOS-2016,PhysRevLett.83.726,PhysRevLett.108.240602,HLTV,LNRG,FLV95,vdB,GS-C,Capitanelli,Capitanelli2,HinzMeinert}. For instance, it is known that the irregularity aspects are characteristical for cancer tumors, providing in addition an important vascularisation around. It makes possible to consider them as objects with boundaries of a higher ``thickness'', hence, mathematically, with a higher boundary dimension. Different ultrasound medical therapies and imaging thus could be areas of applications of the analytical studies of models of the nonlinear acoustics in bounded domains with irregular and possibly fractal boundaries. Fractals model naturally objects with a multi-scaling structure iterated up to infinity. This makes the fractal boundaries the most efficient in the heat exchanges~\cite{DE_GENNES-1982,ROZANOVA-PIERRAT-2012,BARDOS-2016}, optimal in the wave absorbtion~\cite{HDR,HINZ-2021-1} and the most stable structures under loads~\cite{HINZ-2021}. 
 Recently, it was shown that Lipschitz boundaries are not able to fulfill the minimum of the acoustical energy in the framework of a boundaries absorption problems~\cite{MAGOULES-2021} and that this minimum exists for more irregular, possibly, fractal shapes~\cite{HINZ-2021}.
 
 
%
The existence and regularity of the solutions of the Westervelt equation and their linear parts on regular domains ($i.e.$ of the wave and the strongly damped wave equations), typically with a $C^2$ boundary, are well known.
In addition, the solutions become more regular up to the boundary if the initial data 
are more regular. 
We can cite Evans~\cite{EVANS-2010} for the linear wave equation and Refs.~\cite{Kalt3,Kalt2,Kalt1,Kaltwer,Meyer} and the references therein for the strongly damped wave equation and the Westervelt equation with the Dirichlet boundary conditions. This approach, to use the regularity of the boundary, is not helpful even for the Lipschitz case. 
To be able to solve mixed boundary valued problems of partial differential equations (here the strongly damped wave equation and the non-linear Westervelt equation) in domains with non smooth or fractal boundaries it is important to describe a functional framework in which it is possible to consider the weak-well posedness of elliptic equations, in particular of the simplest one, the Poisson equation:
\begin{equation}\label{PoissonDir1}
\left\lbrace
\begin{array}{l}
-\Delta u=f \hbox{ in }\Omega,\\
u=0\hbox{ on }\Gamma_{D,\Omega},\\
\frac{\partial u}{\partial n}=0 \hbox{ on }\Gamma_{N,\Omega},\\
\frac{\partial u}{\partial n}+ a u=0 \hbox{ on }\Gamma_{R,\Omega},
\end{array}
\right.
\end{equation}
with $\partial\Omega=\Gamma_{D,\Omega}\cup \Gamma_{N,\Omega} \cup \Gamma_{R,\Omega}$.
 The results of Jones~\cite{JONES-1981} on $d-$sets and domains admitting $W^{k,p} $ extensions allow saying that, in dimension $2$, $(\epsilon,\delta)$-domains are the most general domains on which we can define traces and extensions of the Sobolev spaces and then solve the Poisson problem. However, it is not the case in $\R^3$ and higher dimensions. For this reason, thanks to optimal Sobolev extension results in $\R^n$ for $p>1$ found by~Haj\l{}as, Koskela and Tuominen~\cite{HAJLASZ-2008}, Arfi and Rozanova-Pierrat introduced in Ref.~\cite{ARFI-2017} a new type of domains with a possibly non-smooth boundary described by a $d$-set preserving Markov's local inequality called the admissible domains. The idea is to work in the class of domains, optimal by the Sobolev extension, and for which it is possible to define a surjective and continuous trace operator on the boundary, especially from $W^{1,2}(\Omega)$. 
 
 As in~\cite{HINZ-2021}, we use this concept (see also~\cite{ROZANOVA-PIERRAT-2020} for more detailed discussion) for boundaries described by the support of a finite upper regular Borel measure. 
 It allows us to consider not only $d$-set boundaries as in Ref.~\cite{ARFI-2017} but also boundaries consisting of different dimensional parts and which do not have a fixed dimension~\cite{JONSSON-2009,BIEGERT-2009}. As in review~\cite{ROZANOVA-PIERRAT-2020}, we call this class of domains the Sobolev admissible domains (see Section~\ref{secfirstresult}) and work on them to study the well-posedness of the Westervelt problem. 

 The most common examples of Sobolev admissible domains are domains with regular or Lipschitz boundaries, with a $d$-set boundaries such as von Koch fractals or with a ``mixed'' boundary presented, for instance, by a three-dimensional cylindrical domain constructed on a base of a two-dimensional domain with a $d$-set boundary~\cite{LANCIA-2010,ARXIV-CREO-2018}. 
 For instance, it could also be a uniform or $(\epsilon,\delta)$-domain with a boundary, which could be described by the support of a finite upper regular Borel measure.

Another important question is whether the solutions of the Poisson problem~(\ref{PoissonDir1}) belong to $C(\Omega)\cap L^{\infty}(\Omega) $ with an estimate of the form:
$$\Vert u\Vert_{L^{\infty}(\Omega)}\leq C \Vert f\Vert_{L^p(\Omega)}.$$
We generalize~\cite{Daners} and show this result for $(\varepsilon,\delta)$-domains and the Sobolev admissible domains.
This estimate is a key point to show that the solutions of our wave-type models are in $C(\Omega)\cap L^{\infty}(\Omega)$ but also to treat the nonlinear term in the Westervelt equation.
 We make attention to the fact that even for a Lipschitz boundary if the domain is not convex, the weak solution of the Poisson equation never belongs to $H^2(\Omega)$, but only to $H^1(\Omega)$, which restricts a lot the study of the Westervelt equation. The famous regularity result of Nyström~\cite{Nystrom,Nystromthese} illustrates the situation. Even for a positive source $f\in\mathcal{C}^\infty_0(\Omega)$ ($0$ means the compact support) the week solution of the Poisson homogeneous Dirichlet problem on the von Koch snowflake domain does not belong to $H^2(\Omega)$~\cite{Nystrom,Nystromthese}, but only to $H^2_{loc}(\Omega)$. This phenomenon is also called the interior regularity~\cite{EVANS-2010}. Therefore, we do not have the global $H^2(\Omega)$ regularity in the general framework of Sobolev admissible domains. To handle this difficulty, we start to study the linear part of the Westervelt equation, $i.e.$, the strongly damped wave equation.
Thus Section~\ref{secwpdampwavmix} is dedicated to the strongly damped wave equation and its weak well-posedness for mixed boundary conditions
using the Galerkin method~\cite{EVANS-2010}. 
A key point is the Poincar\'e inequality which we update for our case in Subsection~\ref{ApPoinc}. 
To obtain more regular solutions, we work in a subspace of $H^1(\Omega)$ defined by the domain of the Laplacian in the sense of $L^2$ or $L^p$. In particular, it means that in the absence of the global $H^2(\Omega)$-regularity~\cite{Nystrom,Nystromthese} of a weak solution $u\in H^1(\Omega)$, it is possible to ensure that $\Delta u\in L^2$ or $L^p$. This additional information is crucial to be able to treat in Section~\ref{secwpWesdirmix} the weak well-posedness of the Westervelt equation with mixed boundary conditions on three or two-dimensional Sobolev admissible domains. The control of the nonlinearity of a quadratic type does not allow to consider dimensions with $n\ge 4$. The proof method consists of applying an abstract theorem of Sukhinin~\cite{Sukhinin} as soon as possible to define an isomorphism between the space of the source term and the space of weak solutions of the linear problem. See also Ref.~\cite{Perso} for a similar application of Ref.~\cite{Sukhinin} in the framework of the strong well-posedness of the Cauchy problem for the Kuznetsov equation. A similar technique was used in~\cite{ARXIV-DEKKERS-2020} for the Dirichlet homogeneous and nonhomogenous boundary problems for the Westervelt equation on arbitrary and admissible domains, respectively. %

In Section~\ref{secconvprefrfr} we consider the question of the approximation of the weak solution of the Westervelt equation on a domain $\Omega$ with a fractal boundary by a sequence of weak solutions on the domains $\Omega_m$ with polyhedral boundaries converging to the fractal boundary in the limit. This time we work in the class of $(\eps,\infty)$ or uniform domains in $\mathbb{R}^n$.

We start in sub-Section~\ref{subsecOSC} by defining Assumptions \ref{a-face} and \ref{a-uniuni} on $\Omega$ 
and $\Omega_m$ so that they are all $(\eps,\infty)$-domains with a fixed $\eps$ independent on $m$. 
This property to be $(\eps,\infty)$ domain with the same $\eps$ is crucial to have the extension 
operators from $\Omega_m$ to $\R^n$ with norms independent on $m$ (see Subsection~\ref{subsectrace} 
and also~\cite[Thm~3.4]{Capitanelli2}). This uniform on $m$ boundness is important to be able to pass to the limit for $m\to +\infty$ in the Mosco convergence of the functionals corresponding to the weak formulations of the Westervelt mixed problem (see Subsection~\ref{subsecMosco}). %
In this way, 
Assumption~\ref{a-uniuni} 
ensures that for a fixed self-similar boundary of a domain in $\R^n$ the existence of a polyhedral boundary sequence of domains with the same $\eps$ as $\Omega$ itself. This generalizes the known two-dimensional approximation results for von Koch mixtures (for the definition, see Appendix~\ref{subsecexKoch}) of Refs.~\cite{Capitanelli,Capitanelli2}.
Thus, we introduce the trace and extension properties for the fixed $\Omega$ and $(\Omega_m)_{m\in \N^*}$ defined in Subsection~\ref{subsecOSC}.
In Subsection~\ref{subsecMosco} we establish the Mosco convergence of the functionals defined by the variational formulation for the Westervelt equation. In the presence of the nonlinear terms, the Mosco convergence result holds only in $\R^2$ or $\R^3$. Nevertheless, the Mosco convergence of the linear part holds in $\R^n$ for all $n\ge 2$. Finally, we finish by proving that the weak solutions $u_m$ on the prefractal approximate domains $\Omega_m$ converge weakly to the weak solution $u$ on the fractal domain (see Theorem~\ref{thmconv}), a method often uses in the case of shape optimization~\cite{MAGOULES-2021}. We notice that since our proof does not require any monotone assumption on $\Omega_m$, our approximation result works in particular for the so-called Minkowski fractal domain~\cite{SapovalGobron,PhysRevLett.83.726,PhysRevLett.108.240602},
and their $3$-dimensional analog.

To summarize, the rest of the paper is organized as follows. 
Section~\ref{secfirstresult} introduces the general functional framework of Sobolev admissible domains on which we update the Poincar\'e inequality (see sub-Section~\ref{ApPoinc}). In Section~\ref{SecPoisson}, noticing the well-posedness of the Poisson mixed problem and the properties of its spectral problem on the Sobolev admissible domains, we introduce the domain of the Laplacian in the sense of $L^2$ and of $L^p$ and generalize Daners' estimate for the Sobolev admissible domains (the proof is given for the completeness in Appendix~\ref{AppDaners}). In Section~\ref{secwpdampwavmix} we consider the weak well-posedness firstly of the mixed initial-boundary value problem for the strongly damped linear wave equation (sub-Section~\ref{subsecweakwpdampwavemix}) and then of the Westervelt equation (sub-Section~\ref{secwpWesdirmix}) both in the $L^2$ and $L^p$ frameworks on the Sobolev admissible domains. In Section~\ref{secconvprefrfr} we consider the approximation of the fractal problem for the Westervelt equation by prefractal problems with Lipschitz boundaries. In sub-Section~\ref{subsecOSC} we define the conditions on $\Omega$ and $\Omega_m$ in $\R^n$ such that they are all $(\eps,\infty)$-domains with a fixed $\eps$ independently on $m$. In sub-Section~\ref{subsectrace} we give the main trace and extension theorems with the uniform on $m$ estimates allowing to pass to the limit. In sub-Section~\ref{subsecMosco} we give the Mosco convergence result (Theorem~\ref{Mconv}) and the weak convergence of the prefractal weak solutions of the Westervelt equation to the fractal one for domains in $\R^2$ or $\R^3$ (Theorem~\ref{thmconv}). The example of a fractal boundary given by Koch mixtures is detailed in Appendix~\ref{subsecexKoch}.

 \section{Functional analysis framework and notations}\label{secfirstresult}
\subsection{Sobolev extension domains}\label{SubsFA1}
Let us start to define the Sobolev extension domains:
\begin{definition}[$W^{k,p}$-extension domains]\label{DefExtD}
 A domain $\Omega\subset \R^n$ is called a $W^{k,p}$-extension domain ($k\in \N^*$) if there exists a bounded linear extension operator $E: W^{k,p}(\Omega) \to W^{k,p}(\R^n)$. This means that for all $u\in W^{k,p}(\Omega)$ there exists a $v=Eu\in W^{k,p}(\R^n)$ with $v|_\Omega=u$ and it holds
 $$\|v\|_{W^{k,p}(\R^n)}\le C\|u\|_{W^{k,p}(\Omega)}$$
 with a constant $C>0$ independent of $u$ (depending only on $k$, $p$, $n$ and $\Omega$).
\end{definition}
It is known~\cite{JONES-1981} that the results of Calderon and Stein~\cite{CALDERON-1961,STEIN-1970} about Sobolev extension domains for domains with Lipschitz boundaries can be improved by the class of $(\eps,\delta)$-domains, or locally uniform domains, which in the bounded case are simply called uniform domains~\cite{HERRON-1991}.
\begin{definition}[$(\eps,\delta)$-domain~\cite{JONES-1981}]\label{DefEDD}
An open connected subset $\Omega$ of $\R^n$ is an $(\eps,\delta)$-domain, $\eps > 0$, $0 < \delta \leq \infty$, if whenever $(x, y) \in \Omega^2$ and $|x - y| < \delta$, there is a rectifiable arc $\gamma\subset \Omega$ with length $\ell(\gamma)$ joining $x$ to $y$ and satisfying
\begin{enumerate}
 \item[(i)] $\ell(\gamma)\le \frac{|x-y|}{\eps}$ and
 \item[(ii)] $d(z,\del \Omega)\ge \eps |x-z|\frac{|y-z|}{|x-y|}$ for $z\in \gamma$. 
\end{enumerate}
\end{definition}
The $(\eps,\delta)$-domains give the optimal class of Sobolev extension domains in $\R^2$ (see~\cite{JONES-1981} Theorem~3), but not in $\R^3$, where there exist Sobolev extension domains which are not $(\eps,\delta)$-domains. The extension constant in Definition~\ref{DefExtD} for an $(\eps,\infty)$-domain depends only on $k$, $p$, $n$ and $\eps$~\cite{JONES-1981}, and thus it is uniform for all $(\eps,\infty)$-domains with the same $\eps$.

Recently, the question about the optimal class of Sobolev extension domains in $\R^n$ was solved in terms of $n$-sets by~\cite{HAJLASZ-2008} for $W^{k,p}$-extension domains with $1<p<\infty$ and $k\in \N$ for domains in $\R^n$.
 For the completeness of the paper, we explain the notion of $d$-sets:
\begin{definition}[Ahlfors $d$-regular set or $d$-set~\cite{JONSSON-1984,JONSSON-1995,WALLIN-1991,TRIEBEL-1997}]\label{defdset}
Let $F$ be a Borel non-empty subset of $\R^n$. The set $F$ is is called a $d$-set ($0<d\le n$) if there exists a $d$-measure $\mu$ on $F$, $i.e.$ a positive Borel measure with support $F$ ($\operatorname{supp} \mu=F$) such that there exist constants 
$c_1$, $c_2>0$,
\begin{equation*}
 c_1r^d\le \mu(F\cap\overline{B_r(x)})\le c_2 r^d, \quad \hbox{ for } ~ \forall~x\in F,\; 0<r\le 1,
 \end{equation*}
where $B_r(x)\subset \R^n$ denotes the Euclidean ball centered at $x$ and of radius~$r$.
\end{definition}
 As~\cite[Prop.~1, p~30]{JONSSON-1984} all $d$-measures on a fixed $d$-set $F$ are equivalent, it is also possible to consider in Definition~\ref{defdset} the $d$-measure $\mu$ equal to the restriction of $d$-dimensional Hausdorff measure $m_d$ to $F$ ($m_d|_F$ is a $d$-measure on $F$ by~\cite[Thrm~1]{JONSSON-1984}).
%
 This in particular implies that a $d$-set $F$ has Hausdorff dimension $d$ in the neighborhood of each point of $F$~\cite[p.33]{JONSSON-1984}.
Definition~\ref{defdset} includes the case $d=n$, $i.e.$ $n$-sets. In $\mathbb{R}^n$ Lipschitz domains and domains with more regular boundaries are $n-$sets and their boundaries are $(n-1)-$sets. Using~\cite{JONSSON-1984,WALLIN-1991}, the $(\varepsilon,\delta)$ domains in $\mathbb{R}^n$ are $n-$sets: 
 \begin{equation}\label{EqNset}
 \exists c_\delta>0\quad \forall x\in \overline{\Omega}, \; \forall r\in]0,\delta[\cap]0,1] \quad \lambda(B_r(x)\cap \Omega)\ge C\lambda(B_r(x))=c_\delta r^n,
 \end{equation}
 where $\lambda(A)$ denotes the $n$-dimensional Lebesgue measure of a set $A$. This property is also called the measure density condition~\cite{HAJLASZ-2008}. Let us notice that an $n$-set 
$\Omega$ cannot be ``thin'' close to its boundary $\del \Omega$.
 At the same time~\cite{WALLIN-1991}, if $\Omega$ is an $(\eps,\delta)$-domain and $\del \Omega$ is a $d$-set ($d<n$), then $\overline{\Omega}=\Omega\cup \del \Omega$ is an $n$-set. A typical example of a $d$-set boundary it is the self-similar fractals as the von Koch fractals. 

To describe the optimal class of Sobolev extension domains in $\R^n$ for $1<p <\infty$, we cite the following result~\cite{HAJLASZ-2008}:
\begin{theorem}[Sobolev extension~\cite{HAJLASZ-2008}]\label{ThSExHaj}
 For $1<p <\infty$, $k=1,2,...$ a domain $\Omega\subset \R^n$ is a $W^k_p$-extension domain if and only if $\Omega$ is an $n$-set and $W^{k,p}(\Omega)=C_p^k(\Omega)$ (in the sense of equivalent norms).
\end{theorem}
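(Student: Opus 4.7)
The theorem asserts an equivalence, so the plan is to split the argument into a necessity and a sufficiency direction. For the forward direction, I would first extract the $n$-set condition from the existence of a bounded extension operator $E$: testing $E$ on families of cutoff functions concentrated near a candidate boundary point $x_0$ shows that if the Lebesgue density of $\Omega$ at some scale $r$ around $x_0$ were arbitrarily small, one could build $f\in W^{k,p}(\Omega)$ of small norm whose extension is nevertheless forced to carry a fixed mass on $B(x_0,r)$, contradicting the boundedness of $E$. Once $\Omega$ is known to be an $n$-set, the identity $W^{k,p}(\Omega)=C^k_p(\Omega)$ with equivalent norms follows from the classical Calder\'on characterisation of Sobolev spaces on $\R^n$: given $f\in W^{k,p}(\Omega)$, I apply Calder\'on's theorem to $Ef\in W^{k,p}(\R^n)=C^k_p(\R^n)$ and restrict the $k$-th sharp maximal function back to $\Omega$; the reverse inequality $\|f\|_{W^{k,p}(\Omega)}\le C\|f\|_{C^k_p(\Omega)}$ is valid on any measurable set and requires no extension hypothesis.

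For the backward direction I would build the extension by a Whitney--Calder\'on reflection construction. Take a Whitney decomposition $\{Q_j\}$ of $\R^n\setminus\overline{\Omega}$ with a subordinate smooth partition of unity $\{\varphi_j\}$, and associate to each cube $Q_j$ a reflected ball $B_j\subset\Omega$ of diameter comparable to $\ell(Q_j)$ and to $\dist(Q_j,\partial\Omega)$. The $n$-set hypothesis enters here in an essential way, since it gives $|B_j\cap\Omega|\simeq\ell(Q_j)^n$, which turns averaging of polynomials of degree $k-1$ against $f$ on $B_j$ into a stable operation with constants independent of $j$. Given $f\in W^{k,p}(\Omega)=C^k_p(\Omega)$, I then set $Ef=f$ on $\Omega$ and $Ef=\sum_j\varphi_j P_j$ on $\R^n\setminus\overline{\Omega}$, where $P_j$ is the best $L^1(B_j)$-polynomial approximation of $f$ of degree less than $k$.

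The main obstacle is then to control $\|\partial^\alpha(Ef)\|_{L^p(\R^n\setminus\Omega)}$ for $|\alpha|\le k$ by $\|f\|_{C^k_p(\Omega)}$. Derivatives landing on the cutoffs $\varphi_j$ produce differences $P_i-P_j$ on overlapping Whitney cubes, and these polynomial differences must be estimated in terms of the Calder\'on sharp maximal function of order $k$ of $f$ on $\Omega$; this amounts to a Poincar\'e-type inequality along chains of reflected balls, whose proof combines the doubling structure implied by the $n$-set condition with a discrete Hardy inequality along the Whitney chains connecting $Q_i$ to $Q_j$. After summing the inside estimate and the outside estimate and checking that $Ef$ and its relevant distributional derivatives match across $\partial\Omega$, the continuity of $E\colon W^{k,p}(\Omega)\to W^{k,p}(\R^n)$ is established. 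The genuinely technical point is the combinatorial bookkeeping of Whitney chain lengths and the polynomial comparison step; it is precisely here that the proof departs from the Lipschitz case of Calder\'on--Stein and requires both hypotheses to play together.
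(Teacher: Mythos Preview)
The paper does not prove this statement at all: Theorem~\ref{ThSExHaj} is quoted verbatim as a result of Haj\l asz, Koskela and Tuominen~\cite{HAJLASZ-2008} and is used as a black box throughout (see, for instance, the proof of Theorem~\ref{thmtradmissdom}, which simply invokes Theorems~\ref{ThSExHaj} and~\ref{ThGBesov}). There is therefore no ``paper's own proof'' to compare your sketch against.

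That said, your outline is a faithful reconstruction of the strategy in the original reference~\cite{HAJLASZ-2008}. The forward implication---that a $W^{k,p}$-extension domain is necessarily an $n$-set and that Calder\'on's characterisation $W^{k,p}(\R^n)=C^k_p(\R^n)$ then transfers to $\Omega$ via $E$---is exactly the argument used there. For the converse, the Whitney decomposition of the complement together with polynomial averages over reflected sets of comparable measure is likewise the correct mechanism, and you have identified the right place where the $n$-set hypothesis is indispensable (it guarantees $|B_j\cap\Omega|\simeq\ell(Q_j)^n$, which stabilises the averaging). One small point of care: you write ``a reflected ball $B_j\subset\Omega$'', but the $n$-set condition does not by itself supply balls contained in $\Omega$; what it gives is $|B_j\cap\Omega|\gtrsim\ell(Q_j)^n$ for a ball $B_j$ centred near $\partial\Omega$, and the polynomial $P_j$ must be taken as the best approximation of $f$ on $B_j\cap\Omega$ rather than on $B_j$. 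Your subsequent sentence suggests you already have this in mind, but the phrasing should be tightened. Otherwise the sketch is sound and matches the source the paper is citing.
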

In Theorem~\ref{ThSExHaj} the spaces $C_p^k(\Omega)$, $1< p<+\infty$, $k=1,2,...$ are the spaces of fractional sharp maximal functions,
\begin{multline*}
 C_p^k(\Omega)=\{f\in L^p(\Omega)|\\
 f_{k,\Omega}^\sharp(x)=\sup_{r>0} r^{-k}\inf_{P\in \mathcal{P}^{k-1}}\frac{1}{\lambda(B_r(x))}\int_{B_r(x)\cap \Omega}|f-P|\dy\in L^p(\Omega)\}
 \end{multline*}
with the norm $\|f\|_{C_p^k(\Omega)}=\|f\|_{L^p(\Omega)}+\|f_{k,\Omega}^\sharp\|_{L^p(\Omega)}$ and with the notation $\mathcal{P}^{k-1}$ for the space of polynomials on $\mathbb{R}^n$ of degree less or equal $k-1$.

From~\cite{JONES-1981} and~\cite{HAJLASZ-2008} we directly have~\cite{ARFI-2017}
\begin{corollary}
 Let $\Omega$ be a bounded finitely connected domain in $\R^2$ and $1<p<\infty$, $k\in \N^*$. The domain $\Omega$ is a $2$-set with $W_p^k(\Omega)=C_p^k(\Omega)$ (with norms' equivalence) if and only if $\Omega$ is an $(\eps,\delta)$-domain and its boundary $\del \Omega$ consists of a finite number of points and quasi-circles.
\end{corollary}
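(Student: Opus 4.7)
The plan is to deduce this corollary as a packaging of two results already recalled in this section: Theorem~\ref{ThSExHaj} of Haj\l{}asz--Koskela--Tuominen and Jones' planar characterization~\cite{JONES-1981} of Sobolev extension domains. Jones (Theorem~3 of~\cite{JONES-1981}) shows that for a bounded finitely connected planar domain the $W^{k,p}$-extension property holds if and only if $\Omega$ is an $(\eps,\delta)$-domain, and that in the finitely connected planar setting the $(\eps,\delta)$-property is equivalent to $\partial\Omega$ being a finite union of points and quasi-circles. Once these two statements are on the table, the corollary is obtained by chaining biconditionals.

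For the forward implication, assume $\Omega$ is a $2$-set satisfying $W^{k,p}(\Omega)=C_p^k(\Omega)$ with equivalent norms. Theorem~\ref{ThSExHaj}, specialized to $n=2$ and the given $p$ and $k$, then produces a bounded linear extension operator $E:W^{k,p}(\Omega)\to W^{k,p}(\R^2)$, so that $\Omega$ is a $W^{k,p}$-extension domain. Applying Jones' planar characterization (which uses only bounded finite connectedness together with the extension property) yields that $\Omega$ is an $(\eps,\delta)$-domain and that $\partial\Omega$ consists of finitely many points and quasi-circles.

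For the converse, start from a bounded finitely connected $(\eps,\delta)$-domain $\Omega\subset\R^2$ whose boundary has the described form. Jones' theorem immediately supplies a bounded $W^{k,p}$-extension, so $\Omega$ is a $W^{k,p}$-extension domain. I would then combine this with the reminder (already given in the text after Definition~\ref{defdset}) that every $(\eps,\delta)$-domain satisfies the measure density condition and is therefore an $n$-set; in our situation this says $\Omega$ is a $2$-set. Feeding both pieces into the ``only if'' half of Theorem~\ref{ThSExHaj} delivers the identity $W^{k,p}(\Omega)=C_p^k(\Omega)$ with equivalence of norms, closing the equivalence.

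There is no genuine analytical obstacle here: the content of the corollary lies entirely inside Theorem~\ref{ThSExHaj} and Jones' planar theorem. The only point needing care is hypothesis matching, namely checking that Jones' description of $\partial\Omega$ via points and quasi-circles is available under the lone assumption of bounded finite connectedness, and that the $2$-set hypothesis is exactly the measure density condition required by Theorem~\ref{ThSExHaj}.
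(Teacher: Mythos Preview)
Your proposal is correct and follows exactly the approach the paper intends: the corollary is stated as a direct consequence of Jones' planar characterization~\cite{JONES-1981} together with Theorem~\ref{ThSExHaj}, and you have simply spelled out the two chained biconditionals. One minor redundancy: in the converse direction, once Jones gives you the $W^{k,p}$-extension property, the ``only if'' half of Theorem~\ref{ThSExHaj} already yields \emph{both} that $\Omega$ is a $2$-set and that $W^{k,p}(\Omega)=C_p^k(\Omega)$, so the separate verification of the $2$-set property via the measure density condition is not needed.
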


\subsection{Trace operator}\label{SubsFA2}
Once we know the optimal class of the Sobolev extension domains, we need to define the trace operator for elements of $W^{k,p}(\Omega)$ on the boundaries of these domains~\cite{ARFI-2017,ROZANOVA-PIERRAT-2020,HINZ-2021,HINZ-2021-1}.
As in~\cite{HINZ-2021,HINZ-2021-1}, we consider the Sobolev extension domains with compact boundaries defined by the support of a positive Borel measure $\mu$ on $\R^n$ ($\del \Omega=\operatorname{supp} \mu$), which in addition is \emph{upper $d$-regular} for a fixed $d>0$~\cite{AH96,FALCONER-1985}, $d\in]n-2,n[$: 
there is a constant $c_d>0$ such that 
\begin{equation}\label{EqMuUP}
\mu(B_r(x))\leq c_d r^d,\quad x\in \del \Omega,\quad 0<r\leq 1.
\end{equation}
Condition~\eqref{EqMuUP} implies that the Hausdorff dimension of the boundary $\dim_H \del \Omega \geq d$. If $\mu$ satisfies both bounds of Definition~\ref{defdset}, then 
$d\le \dim_H \del \Omega \le d$ and hence $\dim_H\del \Omega=d$ (the boundary is the $d$-set). 

This kind of boundary measure is rather general and could be, in particular cases, the Jonsson measures~\cite{JONSSON-1994,ROZANOVA-PIERRAT-2020}, the $d$-measures or a union of different measures of these types. For this general measure $\mu$, supported on a closed subset $\del \Omega\subset \mathbb{R}^n$, we define the corresponding Lebesgue spaces $L^p(\del \Omega,\mu)$~\cite{JONSSON-1994}.

In this general case of $\del \Omega$, $i.e.$ without the classical assumption of the regularity of the boundary, we cannot ensure that $C(\overline{\Omega})$ is dense in $W^{1,2}(\Omega)$ (only $C^\infty(\Omega)$ is still dense). 
Thus, we need to define what is a trace of $u\in W^{1,2}(\Omega)$ on the boundary in the general case for $n\ge 2$.


%
\begin{definition}[Pointwise trace]\label{deftrace}
Let $\Omega$ be a $W^{1,2}$-extension domain and $u\in W^{1,2}(\Omega)$. The trace operator $\operatorname{Tr}: W^{1,2}(\Omega)\to L^2(\del \Omega,\mu)$ on $\del \Omega$ is defined $\mu$-everywhere by $\operatorname{Tr} u:=\widetilde{g}$, where 
\begin{equation}\label{E:pointwiseredef}
\widetilde{g}(x)=\lim_{r\to 0}\frac{1}{\lambda(B_r(x))}\int_{B_r(x)}g(y)dy,\quad x\in \del \Omega,
\end{equation}
is the pointwise redefinition of an extension $g\in W^{1,2}(\mathbb{R}^n)$ of $u$.
\end{definition}
As it is noticed in~\cite[Remark~2(i)]{HINZ-2021-1} and~\cite[Section~5.1]{HINZ-2021}, since the boundary measure $\mu$ satisfies \eqref{EqMuUP} with $d\in]n-2,n[$, the set of points of $\partial\Omega$ where this limit exists is of full $\mu_{\partial\Omega}$-measure, \cite[Section 7]{AH96}. The independence of the chosen extension is proved in \cite[Theorem 6.1]{BIEGERT-2009}, another proof is given in \cite[Theorem 1]{WALLIN-1991}. In 
\cite[Theorem 1]{WALLIN-1991} it is shown that \eqref{E:pointwiseredef} is equivalent to
\begin{equation}\label{EqDefTraceM}
 \hbox{Tr} u(x):=\lim_{r\rightarrow 0} \frac{1}{\lambda(\Omega\cap B_r(x))}\int_{\Omega\cap B_r(x)} u(y)\;d\lambda.
\end{equation}
We can also notice that formally~\eqref{EqDefTraceM} is well-defined for $u\in L^1_{loc}(\Omega)$~\cite{WALLIN-1991}.

We give a trace result~\cite[Section~3]{HINZ-2021-1}, that for elements of $W^{1,2}(\Omega)$, or equivalently, of $H^1(\Omega)$, this limit exists $\mu$-a.e.. It follows, as for \cite[Theorem 5.1]{HINZ-2021}, from~\cite[Corollaries 7.3 and 7.4]{BIEGERT-2009} and the finiteness of the measure on $\partial\Omega$. The result uses \cite[Theorems 7.2.2 and 7.3.2]{AH96}.

\begin{theorem}\textbf{(\cite[Theorem~1]{HINZ-2021-1},\cite[Theorem 5.1]{HINZ-2021})}\label{ThGBesov}
Let $\Omega\subset \mathbb{R}^n$ be a $W^{1,2}$-extension domain. Suppose that $\mu$ is a Borel measure with $\operatorname{supp}\mu=\partial\Omega$ compact in $\mathbb{R}^n$ and such that \eqref{EqMuUP} holds with some $d\in ]n-2,n[$. 
 \begin{enumerate}
 \item[(i)]
 There are a compact linear operator $\operatorname{Tr}:W^{1,2}(\Omega) \to L^2(\partial\Omega,\mu)$ and a constant $c_{\mathrm{Tr}}>0$, depending only on $n$, $\Omega$, $d$ and $c_d$, such that 
 \[\left\|\operatorname{Tr} f\right\|_{L^2(\partial\Omega,\mu)}\leq c_{\operatorname{Tr}}\left\|f\right\|_{W^{1,2}(\Omega)},\quad f\in W^{1,2}(\Omega).\]
 If $\Omega$ is $(\eps,\infty)$-domain, then the constant $c_{\mathrm{Tr}}>0$ depends only on $n$, $\eps$, $d$ and $c_d$.
 Endowed with the norm 
 \[\left\|\varphi\right\|_{\operatorname{Tr}(W^{1,2}(\Omega))}:=\inf\{ \left\|g\right\|_{W^{1,2}(\Omega)}|\ \varphi=\mathrm{Tr}\:g\}\]
 the image $\operatorname{Tr}(W^{1,2}(\Omega))$ becomes a Hilbert space. 
 The embedding $$\operatorname{Tr}(W^{1,2}(\Omega))\subset L^2(\partial\Omega,\mu)$$ is compact.
 \item[(ii)] There is a linear operator $H_{\partial\Omega}:\operatorname{Tr}(W^{1,2}(\Omega)) \to W^{1,2}(\Omega)$ of norm one such that $\operatorname{Tr}(H_{\partial\Omega}\varphi)=\varphi$ for all $\varphi\in \operatorname{Tr}(W^{1,2}(\Omega))$.
 \item[(iii)] If $\Omega$ is bounded, then the norm $\|u\|_{W^{1,2}(\Omega)}$ on $W^{1,2}(\Omega)$ is equivalent to $$\|u\|_{\mathrm{Tr}}=\left( \int_\Omega |\nabla u|^2\dx +\int_{\del \Omega} |\mathrm{Tr}u|^2 d\mu \right)^\frac{1}{2} .$$ 
 \end{enumerate}
\end{theorem}
Actually, the linearity of $H_{\partial\Omega}$ follows from the linearity of $1$-harmonic extension operator on $L^2$-based spaces~\cite[Section~5.1]{HINZ-2021}. Since $\mu$ is Borel regular then $\operatorname{Tr}_{\del \Omega}(W^{1,2}(\Omega))$ is dense in $L^2(\del \Omega,\mu)$. For the proof of (iii) see for example~\cite[Proposition~3]{ARFI-2017} or Theorem 21A and Step 3 in its proof on pp.~247--248 of~\cite{ZEIDLER}.

In what following we denote the space of the image of the trace $\operatorname{Tr}_{\del \Omega}(W^{1,2}(\Omega))$ by $B(\del \Omega,\mu)$.
Knowing more information on the measure $\mu$, $i.e.$ on the regular properties of the boundary $\del \Omega$, it is possible to give the following caracterization of the space $B(\del \Omega,\mu)$:
\begin{enumerate}
 \item if $\del \Omega$ is a Lipschitz boundary, then $B(\del \Omega,\mu)=H^\frac{1}{2}(\del \Omega)$ and $\mu$ is $(n-1)$-dimensional Lebesgue measure on $\del \Omega$~\cite{LIONS-1972,MARSCHALL-1987};
 \item if $\mu$ is a $d$-dimensional measure with $n-2<d<n$, $i.e.$ $\del \Omega$ is a $d$-set, then $B(\del \Omega,\mu)$ is the Besov space $B^{2,2}_\beta(\del \Omega)$ with $\beta=1-\frac{n-d}{2}>0$~\cite{WALLIN-1991,JONSSON-1984};
 \item if $\mu$ in addition to~\eqref{EqMuUP} also satisfies for $n-2<d\le s<n$ for some constants $c_s>0$, $c_d>0$ the following conditions for all $x\in \del \Omega, \; r>0,\quad k\ge 1, \; 0<k r\le 1$
 \begin{align}
&\mu(B(x,kr))\le c_s k^s \mu(B(x,r)), \quad \mu(B(x,kr))\ge c_d k^d \mu(B(x,r)),\label{EqMuDs}
\end{align}
and for constants $c_1,\,c_2>0$ independent of $x$
\begin{align}
&c_1\leq \mu(B(x,1))\leq c_2,\quad x\in \del \Omega, \label{Eqnormalized}
\end{align}
$i.e.$ if $\del \Omega$ is more general as a fixed dimensional $d$-set, then $B(\del \Omega,\mu)$ is the Besov space $B_1^{2,2}(\del \Omega)$~\cite{JONSSON-1994}. This type of measures are also called Jonsson measures.
\end{enumerate}


In what follows we also use the generalization of Rellich-Kondrachov theorem on Sobolev extension domains~\cite{ARFI-2017}:
\begin{theorem}[Sobolev's embeddings]\label{thmsobolembadm}
Let $\Omega\subset\mathbb{R}^n$ be a bounded $W^{k,p}$-extension domain, $1<p<+\infty$, $k$, $l\in\mathbb{N}^*$. Then there hold the following compact embeddings
\begin{enumerate}
\item $W^{k+l,p}(\Omega)\subset\subset W^{l,p}(\Omega)$,
\item $W^{k,p}(\Omega)\subset \subset L^q(\Omega)$, 
\end{enumerate}
with $q\in [1,+\infty[$ if $kp=n$, $q\in [1,+\infty]$ if $kp>n$, and with $q\in \left[1,\frac{pn}{n-kp}\right[$ if $kp<n$.
Moreover if $kp<n$ we have the continuous embedding
$$W^{k,p}(\Omega) \subset L^{\frac{pn}{n-kp}}(\Omega).$$
\end{theorem}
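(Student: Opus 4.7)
The plan is to reduce every claim to the corresponding classical Sobolev embedding and Rellich--Kondrachov theorems on $\mathbb{R}^n$ by exploiting the global extension property. Since $\Omega$ is an $n$-set with $W^{k,p}(\Omega)=C^k_p(\Omega)$, Theorem~\ref{ThSExHaj} provides a bounded linear extension operator $E:W^{k,p}(\Omega)\to W^{k,p}(\mathbb{R}^n)$, and the analogous statement for $W^{k+l,p}$ used in part~(1) follows because the underlying geometric hypothesis ($n$-set) is independent of the Sobolev order.

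For the continuous embedding, given $u\in W^{k,p}(\Omega)$ with $kp<n$, I would form $\tilde u:=Eu\in W^{k,p}(\mathbb{R}^n)$ and apply the classical Gagliardo--Nirenberg--Sobolev inequality on $\mathbb{R}^n$ to obtain $\|\tilde u\|_{L^{pn/(n-kp)}(\mathbb{R}^n)}\le C\|\tilde u\|_{W^{k,p}(\mathbb{R}^n)}$. Restriction to $\Omega$, combined with $\|Eu\|_{W^{k,p}(\mathbb{R}^n)}\le \|E\|\,\|u\|_{W^{k,p}(\Omega)}$, yields $W^{k,p}(\Omega)\hookrightarrow L^{pn/(n-kp)}(\Omega)$ with the expected bound.

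For the two compactness statements, fix a ball $B\subset\mathbb{R}^n$ containing either $\overline{\Omega}$ (when $\Omega$ is bounded) or a prescribed relatively compact subdomain of $\Omega$ (in the local case), together with a cutoff $\chi\in C_c^\infty(\mathbb{R}^n)$ equal to $1$ on that set and supported in $B$. Given a bounded sequence $(u_m)$ in $W^{k+l,p}(\Omega)$ (or $W^{k,p}(\Omega)$), the functions $v_m:=\chi\,Eu_m$ are, by the Leibniz rule and boundedness of $E$, uniformly bounded in $W^{k+l,p}(B)$ (or $W^{k,p}(B)$) and supported in $\overline{B}$. Applying the classical Rellich--Kondrachov theorem on the smooth bounded domain $B$ extracts a subsequence converging strongly in $W^{l,p}(B)$ for part~(1), and in $L^q(B)$ for $q$ in the stated range for part~(2), splitting the subcases $kp<n$, $kp=n$, $kp>n$. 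Restricting to the chosen subset of $\Omega$ delivers the claimed compactness in $W^{l,p}(\Omega)$ or in $L^q_{loc}(\Omega)$, and in $L^q(\Omega)$ when $\Omega$ is bounded.

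The main obstacle is essentially a book-keeping issue: the hypothesis is stated for the single pair $(k,p)$ while statement~(1) requires extension of $W^{k+l,p}$, so one must confirm that the $n$-set property (which is purely geometric) passes the higher-order extension through Theorem~\ref{ThSExHaj}. A subordinate technical check is that $\chi\,Eu_m$ remains uniformly controlled in the relevant Sobolev space; this follows from the Leibniz rule with constants depending only on $\chi$ and $\|E\|$, and is not a genuine difficulty.
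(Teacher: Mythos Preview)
The paper does not give its own proof of this statement; it is quoted from \cite{ARFI-2017} as a known generalization of Rellich--Kondrachov to Sobolev extension domains. Your overall strategy---extend to $\mathbb{R}^n$ via Theorem~\ref{ThSExHaj}, cut off and apply the classical Sobolev and Rellich--Kondrachov theorems on a large ball, then restrict back---is the standard route and is certainly what the cited reference does. For part~(2) and for the continuous critical embedding your argument is complete.

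For part~(1) you correctly identify the only real issue: the hypothesis $W^{k,p}(\Omega)=C^k_p(\Omega)$ is assumed at order $k$, whereas your argument needs a bounded extension operator on $W^{k+l,p}(\Omega)$. Your proposed resolution, however, is not correct as stated. Theorem~\ref{ThSExHaj} is an \emph{equivalence}: being an $n$-set is necessary but not sufficient, and the second condition $W^{k+l,p}(\Omega)=C^{k+l}_p(\Omega)$ does not follow merely from the ``purely geometric'' $n$-set property. What actually closes the gap is a further result contained in \cite{HAJLASZ-2008}: for $1<p<\infty$ the $W^{k,p}$-extension property is independent of the order $k\ge 1$, so that the hypothesis at level $k$ automatically yields the extension at level $k+l$. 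You should invoke that fact explicitly rather than appeal to the $n$-set condition alone; otherwise the argument for~(1) is incomplete.
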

\subsection{Sobolev admissible domains}
To simplify the notations, we use the notion of (Sobolev) admissible domains~\cite{ARFI-2017,
ROZANOVA-PIERRAT-2020,HINZ-2021}, allowing to ensure the continuity of the extension/trace operators from/to a domain and its boundary at the same time (see subsections~\ref{SubsFA1} and~\ref{SubsFA2}): 
\begin{definition}[Sobolev admissible domain]\label{defadmisdomain}
A domain $\Omega\subset \R^n$ is called a Sobolev admissible domain if it is a Sobolev extension domain, with a compact boundary $\del \Omega$ which is the support of a Borel measure $\mu$ satisfying \eqref{EqMuUP} for some $d$, $0\le n-2< d<n$.
 \end{definition}
By Theorem~\ref{ThGBesov} (see also~\cite[Theorem~5.1]{HINZ-2021}), the trace operator $\operatorname{Tr}:W^{1,2}(\Omega) \to L^2(\partial\Omega,\mu)$ is compact. By Sobolev extension domain properties (see for the geometrical caracteriszation Theorem~\ref{ThSExHaj}), for fixed $1<p<\infty$ and $k\in \N^*$ the extension operator $\mathrm{E}: W^{k,p}(\Omega)\to W^{k,p}(\R^n)$ with the right inverse $\mathrm{T}: W^{k,p}(\R^n)\to W^{k,p}(\Omega)$ are linear continuous (by~\cite{HAJLASZ-2008} the continuity constant depends only on $k$, $p$, $n$ and the constant $c_\delta$ from the definition of $n$-set~\eqref{EqNset}).

In what follows, we are interesting in the case $k=1$ and we write $H^1(\Omega)$ instead of $W^{1,2}(\Omega)$.
 \begin{example}
 An example of a Sobolev admissible domain could be a bounded domain of $\R^n$ with a boundary $\del \Omega$ equal to a finite disjoint union of parts $\Gamma_j$ which are $d_j$-sets respectively for $n-1\le d_j<n$ ($j=1,\ldots,m$). For instance, it is the case of a three-dimensional cylindrical domain constructed on a base of two-dimensional domain with a $d$-set boundary as considered for the Koch snowflake base in~\cite{LANCIA-2010,ARXIV-CREO-2018}. 
 \end{example}

Once $B(\del \Omega,\mu)$ is a Hilbert space, independently on the chosen boundary measure $\mu$ satisfying \eqref{EqMuUP}, it is possible to work with his topological dual space $B'(\del \Omega,\mu)$ and to understand the normal derivative on $\del \Omega$ as an element of $B'(\del \Omega,\mu)$ using the usual Green formula~\cite{LANCIA-2002,LANCIA-2003,ARXIV-CREO-2018,ROZANOVA-PIERRAT-2020,HINZ-2021}.

\begin{proposition}\label{PropGreen}\textbf{(Green formula)}
Let $\Omega$ be a Sobolev admissible domain in $\R^n$ ($n\ge 2$) with boundary $\del \Omega=\operatorname{supp} \mu$ satisfying \eqref{EqMuUP} with $n-2<d<n$. Then for all $u,\;v\in H^1(\Omega)$ with $\Delta u\in L^2(\Omega)$ it holds the Green formula
\begin{equation}\label{EqFracGreen}
 \langle \frac{\del u}{\del \nu}, 
 \mathrm{Tr}v\rangle _{(B'(\del \Omega,\mu), B(\del \Omega,\mu))}:=\int_\Omega v\Delta u\dx+\int_\Omega \nabla v \nabla u \dx.
 \end{equation}
 
 Equivalently, for any Sobolev admissible domain $\Omega$ the normal derivative of $u\in H^1(\Omega)$ with $\Delta u\in L^2(\Omega)$ 
 is defined by~Eq.~\eqref{EqFracGreen} as a linear and continuous functional on $B(\del \Omega,\mu)$.
\end{proposition}
The statement of Proposition~\ref{PropGreen} follows, from the surjective property of the continuous trace operator $\mathrm{Tr}_{\del \Omega}:H^1(\Omega)\to B(\del \Omega,\mu)$.

\subsection{Poincaré inequality}\label{ApPoinc}
As it is known that the boundary regularity does not important to have the Poincar\'e inequality in $W^{1,p}_0(\Omega)$ spaces, it also holds on bounded (at least in one direction, $i.e.$ for domains containning in a domain of the form $]a,b[\times \R^{n-1}$ for $a<b$) Sobolev admissible domains: %
\begin{theorem}[Poincar\'e inequality]\label{inegPoinc}
Let $\Omega\subset\mathbb{R}^n$ with $n\geq 2$ be a bounded (at least in one direction) 
 domain. For all $u\in W^{1,p}_0(\Omega)$ with $1\leq p<+\infty$, there exists $C>0$ depending only on $\Omega$, $p$ and $n$ such that
$$\Vert u\Vert_{L^p(\Omega)}\leq C \Vert \nabla u\Vert_{L^p(\Omega)} .$$
Therefore the semi-norm $\Vert .\Vert_{W^{1,p}_0(\Omega)}$, defined by $\Vert u\Vert_{W^{1,p}_0(\Omega)}:=\Vert \nabla u\Vert_{L^p(\Omega)}$, is a norm which is equivalent to $\Vert .\Vert_{W^{1,p}(\Omega)}$ on $W^{1,p}_0(\Omega)$.

Moreover, if $\Omega$ is a bounded Sobolev extension domain and $1<p<+\infty$, for all $u\in W^{1,p}(\Omega)$ there exists $C>0$ depending only on $\Omega$, $p$ and $n$ such that
$$\left\Vert u-\frac{1}{\lambda(\Omega)}\int_{\Omega} u\;d\lambda\right\Vert_{L^p(\Omega)}\leq C \Vert \nabla u\Vert_{L^p(\Omega)} .$$
\end{theorem}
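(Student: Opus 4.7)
The plan for the first inequality is to exploit the boundedness in one direction alone; no boundary regularity is needed. After a rotation, assume $\Omega\subset S:=\{x\in\mathbb{R}^n: a\le x_1\le b\}$ for some $a<b$. For $u\in C^\infty_c(\Omega)$, extend by zero to a function $\tilde u\in C^\infty_c(\mathbb{R}^n)$; the fundamental theorem of calculus in the first coordinate gives
$$\tilde u(x_1,x_2,\ldots,x_n)=\int_a^{x_1}\partial_1\tilde u(t,x_2,\ldots,x_n)\,dt.$$
H\"older's inequality yields $|\tilde u(x)|^p\le(b-a)^{p-1}\int_a^b|\partial_1\tilde u|^p\,dt$, and integrating over the slab $S$ produces $\|u\|_{L^p(\Omega)}^p\le(b-a)^p\|\partial_1 u\|_{L^p(\Omega)}^p\le (b-a)^p\|\nabla u\|_{L^p(\Omega)}^p$. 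Since $C^\infty_c(\Omega)$ is dense in $W^{1,p}_0(\Omega)$ by definition, the inequality passes to the limit, proving the first assertion. Equivalence of the two norms on $W^{1,p}_0(\Omega)$ is then immediate.

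For the Poincar\'e--Wirtinger-type inequality in the bounded case, I would argue by contradiction using compactness. Suppose no such $C$ exists; then there is a sequence $(u_m)\subset W^{1,p}(\Omega)$ with
$$\int_\Omega u_m\,dx=0,\qquad \|u_m\|_{L^p(\Omega)}=1,\qquad \|\nabla u_m\|_{L^p(\Omega)}\to 0.$$
In particular $(u_m)$ is bounded in $W^{1,p}(\Omega)$. Because $\Omega$ is a Sobolev admissible domain, it is an $n$-set with $W^{1,p}(\Omega)=C^1_p(\Omega)$, so Theorem~\ref{thmsobolembadm} applies (taking $q=p$, which is in the admissible range in each of the cases $kp<n$, $kp=n$, $kp>n$) and yields the compact embedding $W^{1,p}(\Omega)\hookrightarrow\hookrightarrow L^p(\Omega)$. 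Extracting a subsequence, $u_m\to u$ strongly in $L^p(\Omega)$; combined with $\nabla u_m\to 0$ in $L^p(\Omega)$ this shows $u\in W^{1,p}(\Omega)$ with $\nabla u=0$. Connectedness of $\Omega$ forces $u$ to be a constant, and passing to the limit in the zero-mean condition makes that constant equal to $0$, contradicting $\|u\|_{L^p(\Omega)}=1$.

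The first inequality is elementary and uses nothing about $\partial\Omega$; the real structural input occurs only in the second statement, where the main (indeed only) obstacle is the compact embedding $W^{1,p}(\Omega)\hookrightarrow\hookrightarrow L^p(\Omega)$. For general non-smooth $\Omega$ this fails, but for Sobolev admissible domains it is delivered directly by the Haj{\l}asz--Koskela--Tuominen characterization combined with the extension $E_\Omega:W^{1,p}(\Omega)\to W^{1,p}(\mathbb{R}^n)$ of Theorem~\ref{thmtradmissdom}, as reformulated in Theorem~\ref{thmsobolembadm}. Thus the entire argument reduces to an extension-plus-slab computation for part one and a textbook compactness-contradiction argument for part two, with the fractal/non-Lipschitz nature of $\partial\Omega$ entering only through the availability of a bounded $W^{1,p}$-extension operator.
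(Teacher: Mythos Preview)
Your proof is correct and follows essentially the same approach as the paper: the paper's proof merely sketches that the $W^{1,p}_0$ case ``comes from the boundness of $\Omega$'' (your slab/FTC computation) and that the $W^{1,p}$ case follows from the compact embedding of Theorem~\ref{thmsobolembadm} via the standard Evans compactness-contradiction argument, which is exactly what you wrote out. You have simply supplied the details the paper leaves implicit.
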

\begin{proof}
The result for $u\in W^{1,p}_0(\Omega)$ comes from the boundness of $\Omega$. The result for $u\in W^{1,p}(\Omega)$ comes from the compactness of the embedding $W^{1,p}(\Omega)\subset\subset L^p(\Omega)$ from Theorem~\ref{thmsobolembadm} and following for instance the proof in Ref.~\cite{EVANS-2010} (see section 5.8.1 Theorem 1).
\end{proof}
Let us denote by $\mathcal{H}^{n-1}$ the $(n-1)$-dimensional Hausdorff measure.
We introduce the space $V_{\Gamma}(\Omega)$ for a domain $\Omega$ with a non trivial closed part of boundary $\Gamma\subset \del \Omega$ ($i.e.$ $\mathcal{H}^{n-1}(\Gamma)>0$):
\begin{equation}\label{eqVOmega}
 V_\Gamma(\Omega):=\lbrace u\in H^1(\Omega)\vert\; Tr_{\Gamma} u=0\rbrace.
\end{equation}

 Let us give two results on the Poincar\'e's inequality on $(\varepsilon,\delta)$-domain which we use in Section~\ref{secconvprefrfr}.
 
 \begin{figure}[!ht]
\begin{center}
\psfrag{aa}{$a)$}
 \psfrag{bb}{$b)$}
 \psfrag{G}{$\Gamma$}
 \psfrag{Ge}{$\Gamma^*$}
 \psfrag{O}{$\Omega$}
 \psfrag{Oe}{$\Omega^*$}
 \psfrag{V}{$V$}
\includegraphics[scale=0.35]{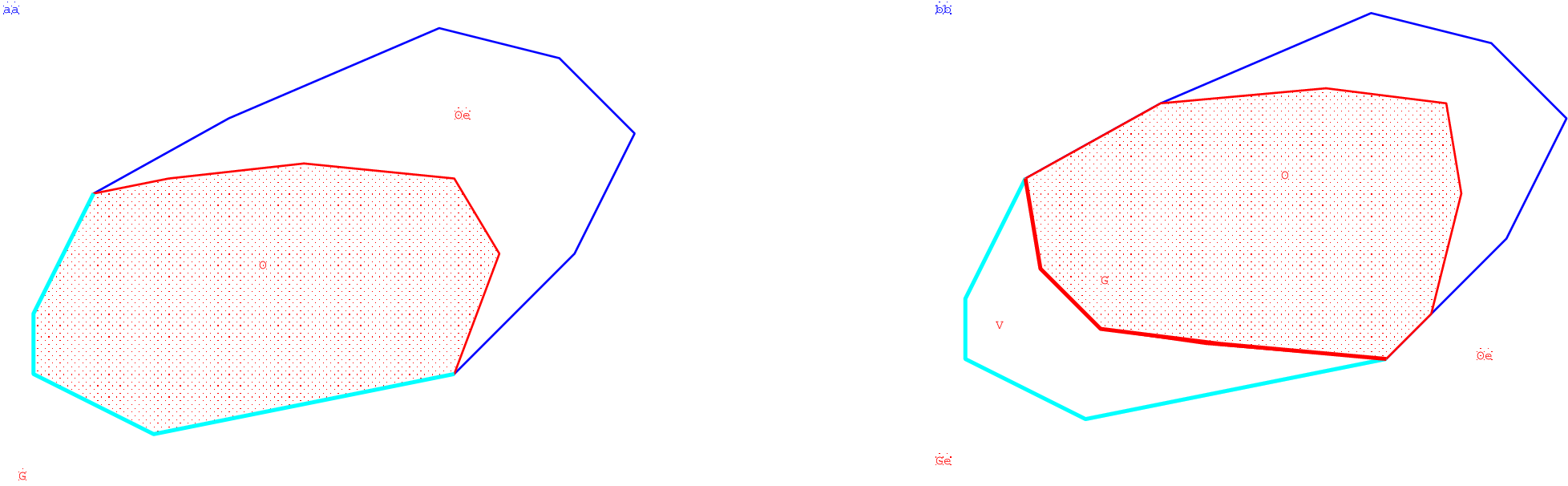} %
\vspace*{8pt}
\caption{\label{FigIlDom} Illustration for two possible cases treated in Theorem~\ref{thmPoinc2}. The case $a)$ corresponds to the case $\Gamma=\Gamma^*\subset\del \Omega \cap \partial\Omega^*$ and the case $b)$ to the case when $\Gamma$ and $\Gamma^*$ are the same the starting and the ending points. Each time $\Omega$ is the dots-filled area.}
\end{center}
\end{figure}

 \begin{theorem}\label{thmPoinc2}
 Let $\Omega\subset\Omega^*\subset\mathbb{R}^n$ be two bounded $(\varepsilon,\delta)$-domains (uniform domains) such that either 
 $$\Gamma=\Gamma^*\subset \partial\Omega\cap\partial\Omega^* \quad \hbox{with} \quad \mathcal{H}^{n-1}(\Gamma)>0,$$
 or $$ \Gamma \subset \partial\Omega,\quad \Gamma^*\subset\partial\Omega^*\quad \hbox{with} \quad \partial\Gamma=\partial \Gamma^*$$ 
 and $\Gamma\cup \Gamma^*$ defines the closed boundary of an open bounded set $V$ ($\partial V=\Gamma\cup \Gamma^*$) which $V \subset \Omega^*\setminus \Omega$ (see~Fig.~\ref{FigIlDom}).
 Then it holds the Poincar\'e inequality for all $u\in V_\Gamma(\Omega)$
 $$\Vert u\Vert_{L^2(\Omega)}\leq C \Vert \nabla u\Vert_{L^2(\Omega)}$$
 with $C>0$ depending only on $\varepsilon$, $\delta$ and the constant of the Poincar\'e inequality on $V_{\Gamma^*}(\Omega^*)$.
 \end{theorem}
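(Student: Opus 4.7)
My plan is to reduce the Poincar\'e inequality on $V_\Gamma(\Omega)$ to the hypothesized inequality on $V_{\Gamma^*}(\Omega^*)$ by constructing a suitable extension. Given $u \in V_\Gamma(\Omega)$, I will build $\tilde u \in V_{\Gamma^*}(\Omega^*)$ with $\tilde u|_\Omega = u$, and then use $\|u\|_{L^2(\Omega)} \leq \|\tilde u\|_{L^2(\Omega^*)}$ together with the assumed Poincar\'e inequality on $V_{\Gamma^*}(\Omega^*)$ and a bound on $\|\nabla \tilde u\|_{L^2(\Omega^*)}$ in terms of $\|\nabla u\|_{L^2(\Omega)}$.

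For case~(b), where $\Gamma \cup \Gamma^*$ bounds the intermediate region $V \subset \Omega^* \setminus \Omega$, extension by zero is geometrically natural: set $\tilde u = u$ on $\Omega$ and $\tilde u = 0$ on $\Omega^* \setminus \Omega$. The hypothesis $\operatorname{Tr}_\Gamma u = 0$ makes the inner and outer traces across $\Gamma$ agree, so by the standard gluing lemma $\tilde u \in H^1(\Omega^*)$; vanishing of $\tilde u$ on $V$ gives $\operatorname{Tr}_{\Gamma^*} \tilde u = 0$, hence $\tilde u \in V_{\Gamma^*}(\Omega^*)$. Since $\nabla \tilde u \equiv 0$ outside $\Omega$, one has $\|\nabla \tilde u\|_{L^2(\Omega^*)} = \|\nabla u\|_{L^2(\Omega)}$, and the Poincar\'e inequality on $V_{\Gamma^*}(\Omega^*)$ yields $\|u\|_{L^2(\Omega)} \leq C^* \|\nabla u\|_{L^2(\Omega)}$ with $C = C^*$; no $(\eps,\delta)$-dependence enters here.

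For case~(a), where $\Gamma = \Gamma^* \subset \partial\Omega \cap \partial\Omega^*$, the zero extension generally fails because $u$ is not required to vanish on the interior portion $\partial\Omega \setminus \Gamma$ of $\partial\Omega$. I would instead invoke the Jones extension $E_\Omega : H^1(\Omega) \to H^1(\R^n)$ for the $(\eps,\delta)$-domain $\Omega$, provided by Theorem~\ref{ThSExHaj}, whose norm $C_E$ depends only on $\eps,\delta,n$, and set $\tilde u = (E_\Omega u)|_{\Omega^*}$. Since $\Omega \subset \Omega^*$ and $\Gamma \subset \partial\Omega \cap \partial\Omega^*$, the two domains coincide on a local neighborhood of $\Gamma$, so the $\Omega^*$-trace of $\tilde u$ on $\Gamma^*$ agrees with $\operatorname{Tr}_\Gamma u = 0$, placing $\tilde u$ in $V_{\Gamma^*}(\Omega^*)$. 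Combining the Poincar\'e inequality on $V_{\Gamma^*}(\Omega^*)$ with the extension bound produces an estimate of the form $\|u\|_{L^2(\Omega)} \leq C^*\|\nabla u\|_{L^2(\Omega)} + C^*C_E \|u\|_{H^1(\Omega)}$.

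The main obstacle is closing case~(a) without circularity, since $\|u\|_{H^1(\Omega)}$ reintroduces $\|u\|_{L^2(\Omega)}$ on the right. To bypass this I would refine the Whitney-cube construction underlying the Jones extension: on each Whitney cube $Q^* \subset \Omega^* \setminus \Omega$, $\tilde u$ is a polynomial approximation of $u$ on a reflected cube $Q \subset \Omega$, and its gradient on $Q^*$ is controlled by $L^2$ oscillations of $u$ on $Q$, which local Poincar\'e--Wirtinger bounds by $\|\nabla u\|_{L^2(Q)}$. Summing these local bounds and using $\operatorname{Tr}_\Gamma u = 0$ to chain Poincar\'e--Wirtinger estimates along Whitney-cube sequences terminating at $\Gamma$ should yield $\|\nabla \tilde u\|_{L^2(\Omega^* \setminus \Omega)} \leq C(\eps,\delta)\|\nabla u\|_{L^2(\Omega)}$ with no $L^2$ remainder, producing the claimed $C = C(\eps,\delta,C^*)$. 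This gradient-only refinement of the Jones bound is the technical heart of the proof.
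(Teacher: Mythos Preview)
Your diagnosis of the circularity in case~(a) is exactly right, and your proposed resolution---a gradient-only extension bound---is the correct idea. But you do not need to build it from scratch via Whitney cubes: the Jones extension theorem already applies to the \emph{homogeneous} space $W(\Omega)=\{u\in\mathcal D'(\Omega):\nabla u\in L^2(\Omega)\}$ and yields an extension $\Lambda:W(\Omega)\to W(\R^n)$ with
\[
\|\nabla \Lambda u\|_{L^2(\R^n)}\le C(\eps,\delta,n)\,\|\nabla u\|_{L^2(\Omega)},
\]
with no $\|u\|_{L^2}$ term on the right. This is stated in \cite{JONES-1981} (see also \cite{Rogers,AHMNT}) and is precisely what the paper invokes. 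With this in hand, case~(a) is immediate: restrict $\Lambda u$ to $\Omega^*$, observe that its trace on $\Gamma=\Gamma^*$ vanishes, and apply the Poincar\'e inequality on $V_{\Gamma^*}(\Omega^*)$.

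For case~(b), the paper also uses the homogeneous extension $\Lambda$, then modifies it by setting it to zero on $V\cup\Gamma$ (not on all of $\Omega^*\setminus\Omega$). Your zero-extension on $\Omega^*\setminus\Omega$ is cleaner when it works, but it tacitly assumes $\partial\Omega\cap\Omega^*=\Gamma$ (equivalently $\Omega^*\setminus\overline\Omega=V$), which the hypotheses do not guarantee: the statement only says $V\subset\Omega^*\setminus\Omega$. If there are other portions of $\partial\Omega$ interior to $\Omega^*$ on which $u$ need not vanish, the gluing lemma fails. The paper's route---extend by $\Lambda$, then zero out only on $V$---avoids this issue at the cost of invoking the $(\eps,\delta)$-extension in both cases.
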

 \begin{proof}
 By Theorem~\ref{inegPoinc} the Poincar\'e inequality holds on $V_\Gamma(\Omega)$ (see also \cite[Lemma~1 (ii)]{HINZ-2021-1}).
 Let us consider the following space
 $$u\in W(\Omega):=\lbrace u\in \mathcal{D}'(\Omega)\vert \;\Vert\nabla u\Vert_{L^2(\Omega)}<+\infty\rbrace.$$
 If $u\in V_\Gamma(\Omega)$ then obviously $u\in W(\Omega)$. In addition, according to Ref.~\cite{JONES-1981} (see also Refs.~\cite{AHMNT,Rogers}) $W(\Omega)$ admits a linear continuous extension to $W(\mathbb{R}^n)$, denoted by $\Lambda$, whose norm only depends of $\varepsilon$, $\delta$ and of $n$: 
 $$\Vert \nabla \Lambda u\Vert_{L^2(\mathbb{R}^n)}\leq C(\varepsilon,\delta,n) \Vert\nabla u\Vert_{L^2(\Omega)}$$
 and, as a consequence,
$$ \Vert \nabla \Lambda u\vert_{\Omega^*}\Vert_{L^2(\Omega^*)}\leq C(\varepsilon,\delta,n) \Vert\nabla u\Vert_{L^2(\Omega)}.$$
Let us start to consider the first case corresponding to the point a) on Figure~\ref{FigIlDom}.
By the definition of the extension $\Lambda u=u$ on $\Omega$, and by the analogous argument as in~\cite[Appendix~B]{HINZ-2021-1}, we have
$$Tr_{\Gamma} \Lambda u\vert_{\Omega^*}=Tr_{\Gamma} u=0.$$
Thus we can consider $\Lambda u\vert_{\Omega^*}\in V_\Gamma(\Omega^*)$ and by the Poincar\'e inequality on $V_\Gamma(\Omega^*)$ we have
$$\Vert \Lambda u\vert_{\Omega^*}\Vert_{L^2(\Omega^*)}\leq C(\Omega^*) \Vert \nabla \Lambda u\vert_{\Omega^*}\Vert_{L^2(\Omega^*)}.$$
To conclude we just notice that, as $\Omega$ and $\Omega^*$ are bounded, it holds
$$\Vert u\Vert_{L^2(\Omega)}\leq \Vert \Lambda u\vert_{\Omega^*}\Vert_{L^2(\Omega^*)}.$$
In the case b) of Figure~\ref{FigIlDom} we define $v$ on $\Omega^*$ by
$$v\vert_{\Omega^*\setminus \overline{V}}=\Lambda u\vert_{\Omega^*\setminus \overline{V}}\quad \hbox{and} \quad v\vert_{V\cup\Gamma}=0.$$ 
Then $v\in V_{\Gamma^*}(\Omega^*)$ and 
$$ \Vert \nabla v \Vert_{L^2(\Omega^*)}\leq C(\varepsilon,\delta,n) \Vert\nabla u\Vert_{L^2(\Omega)}.$$
By the Poincar\'e inequality on $V_{\Gamma^*}(\Omega^*)$ we have
$$\Vert v\Vert_{L^2(\Omega^*)}\leq C(\Omega^*) \Vert \nabla v \Vert_{L^2(\Omega^*)}$$
and as
$$\Vert u\Vert_{L^2(\Omega)}\leq \Vert v\Vert_{L^2(\Omega^*)},$$
 this finishes the proof.
 \end{proof}
 \section{Remarks on the Poisson equation with the mixed boundary conditions}\label{SecPoisson} 
 We start now to apply the introduced framework of Sobolev admissible domains for the mixed boundary valued problem for the Poisson equation. These preliminary discussion is crucial for the properties of the waves problems constructed on it.
 
 Let $\Omega$ be a bounded Sobolev admissible domain in $\mathbb{R}^n$, $n\ge 2$ with a boundary $\del \Omega=\operatorname{supp} \mu$. 
In all the sequel of this article, we suppose that its boundary $\partial\Omega=\Gamma_D\cup\Gamma_N\cup\Gamma_R$ is a disjoint union of three types of boundaries (corresponding to the Dirichlet, the Neumann, and the Robin boundary conditions respectively), each a Borel set and, at least $\Gamma_D$ and $\Gamma_R$, of positive measure $\mu$.
 We denote by $V(\Omega)$ the Hilbert subspace of $H^1(\Omega)$ (in Subsection~\ref{ApPoinc} it corresponds to $V_{\Gamma_D}(\Omega)$, but here we simplify the notation) 
 \begin{equation}\label{EqV}
 V(\Omega)=\{u\in H^1(\Omega)|\; \operatorname{Tr}u|_{\Gamma_D}=0\}
 \end{equation}
endowed with the following norm
\begin{equation}\label{normeqVOmega}
\Vert u\Vert_{V(\Omega)}^2=\int_{\Omega}|\nabla u|^2\;dx+a\int_{\Gamma_R}|Tr_{\partial\Omega}u|^2 {\rm d} \mu,
\end{equation}
associated to the inner product
$$(u,v)_{V(\Omega)}=\int_{\Omega}\nabla u\; \nabla v\;dx+a\int_{\Gamma_R}Tr_{\partial\Omega}u\;Tr_{\partial\Omega}v {\rm d} \mu.$$
 Thanks to Theorem~\ref{ThGBesov} 
 the norm $\Vert.\Vert_{V(\Omega)}$ is equivalent to the usual norm $\Vert .\Vert_{H^1(\Omega)}$ on $V(\Omega)$. 

 On $\Omega$ we consider the mixed boundary problem for the Poisson equation~\eqref{PoissonDir1} with a fixed $a>0$
 in the following weak sense:
 \begin{equation}\label{EqVFPoisson}
 \forall v\in V(\Omega) \quad (u,v)_{V(\Omega)}=(f,v)_{L^2(\Omega)}.
 \end{equation}
 Then (see for more details~\cite{ARFI-2017}) for all $f\in L^2(\Omega)$ and $a>0$ the Poisson problem~(\ref{PoissonDir1})
 has a unique weak solution $u\in V(\Omega)$. Furthermore, the mapping $ f\mapsto u$ is a compact linear operator from $L^2(\Omega)$ to $V(\Omega)$ with the estimate
 $$\Vert u\Vert_{V(\Omega)}\leq C(\Omega)\Vert f\Vert_{L^2(\Omega)} .$$

%

%

 Let us consider the corresponding spectral problem. We say that $\lambda\in \mathbb{C}$ is an eigenvalue of the Poisson problem~(\ref{PoissonDir1}) associated to the eigenfunction $u\in V(\Omega)$ with $\|u\|_{L^2(\Omega)}=1$, which is a weak solution of the following variational formulation
\begin{equation}\label{EqVFPoissonVP}
 \forall v\in V(\Omega)\quad \int_{\Omega}\nabla u\nabla v\dx+a \int_{\Gamma_R}Tr_{\partial\Omega}u\;Tr_{\partial\Omega}v {\rm d} \mu=\int_{\Omega} \lambda u v\dx. 
\end{equation}
 Thanks to the compactness by Theorem~\ref{ThGBesov} of the trace $\operatorname{Tr}: V(\Omega)\to L^2(\del \Omega)$ and of the inclusion $V(\Omega)\to L^2(\Omega)$ and by the assumption that $a>0$ is real, 
 we have the usual properties of the spectral problem associated with~(\ref{PoissonDir1}).
\begin{theorem}[Spectral Poisson mixed problem]\label{thmeigenfuncLaprob}
Let $\Omega$ be a bounded Sobolev admissible domain in $\mathbb{R}^n$ ($n\geq2$) and $a>0$. Then the operator $-\Delta$ associated with the spectral problem~\eqref{EqVFPoissonVP} is self-adjoint positive operator on the Hilbert space $V(\Omega)$ with a countable
%
number of real, strictly positive, eigenvalues of finite multiplicity, which can be ordered in a sequence
$$0<\lambda_1\leq\lambda_2\leq\lambda_3\leq\cdots \lambda_k\leq \ldots, \quad \lambda_k\rightarrow +\infty\hbox{ when }k\rightarrow +\infty,$$
 and the corresponding eigenfunctions $(w_k)_{k\in \N^*}\subset V(\Omega)$ form a basis of $V(\Omega)$ and an orthonormal basis of $L^2(\Omega)$.
\end{theorem}
\begin{remark}
For the problems with mixed-type boundary conditions involving the Robin type part of the boundary, it is crucial to work in the class of (bounded) Sobolev admissible domains. Indeed, the Sobolev extension property ensures the compactness of the embedding of $V(\Omega)$ into $L^2(\Omega)$, and the compactness of the trace operator $\operatorname{Tr}: V(\Omega) \to L^2(\Gamma_R,\mu)$ follows from the boundary properties, $i.e.$ from the properties of the Borel measure $\mu$. However, in the case $\del \Omega=\Gamma_D$ with the homogeneous Dirichlet boundary condition, as usual, it is possible to consider arbitrary bounded domains. 
\end{remark}
As $\Omega$ is a bounded domain, we have $L^p(\Omega)\subset L^2(\Omega)$ if $p\geq 2$, and consequently it is also possible to take $f\in L^p(\Omega)$ and consider the weak solutions in $V(\Omega)\cap L^p(\Omega)$ in the sense of~(\ref{EqVFPoisson}). Let us also notice~\cite{Daners} that for $p\ge 2>\frac{2n}{n+1}$ the space $V(\Omega)\cap L^p(\Omega)$ is dense in $L^p(\Omega)$. Therefore, there is the following generalization of the domain of the Laplacian in the $L^p$ framework~\cite{Daners}: 
\begin{definition}[Laplacian domain in $L^p$~\cite{Daners}]\label{defdomLapl}
Let $\Omega$ be a bounded Sobolev admissible domain and $p\geq 2$. We define the Laplacian operator associated with the mixed boundary problem~\eqref{EqVFPoisson}
\begin{align*}
-\Delta:\mathcal{D}_p(-\Delta)\subset V(\Omega)&\rightarrow L^p(\Omega) \\
u&\mapsto -\Delta u
\end{align*}
with the dense domain 
$$\mathcal{D}_p(-\Delta)=\{u\in V(\Omega)\cap L^p(\Omega)|\; -\Delta u\in L^p(\Omega), \hbox{ i.e. } \exists f\in L^p(\Omega) \hbox{ such that it holds~(\ref{EqVFPoisson})}\}$$
and introduce the notation $\Vert u\Vert_{ \mathcal{D}_p(-\Delta)}:=\Vert \Delta u\Vert_{L^p(\Omega)}=\|f\|_{L^p(\Omega)}$ for $u\in \mathcal{D}_p(-\Delta)$.
\end{definition}
By~\cite{Daners} the operator $-\Delta$ on $\mathcal{D}_p(-\Delta)$ is the $L^p$-realisation of the Laplacian for mixed boundary conditions, which can also be viewed as the generator of the associated heat semigroup on $L^p(\Omega)$~\cite[Theorem~6.1]{Daners}. In particular~\cite[Corollary~5.5]{Daners}, since $\Omega$ is a bounded Sobolev admissible domain, the spectrum of the operator $-\Delta$ on $\mathcal{D}_p(-\Delta)$ does not depend on the choice of $p\ge 1$.
%
%

The $L^p$-framework for the Poisson problem~(\ref{PoissonDir1}) is in particular important for the study of the continuity of its solution~\cite{Daners}. We directly update the result from Ref.~\cite{Daners} for the bounded Sobolev admissible domains with an $(n-1)-$set boundary:
\begin{theorem}\label{thmDaners}
Let $p>n$, $a>0$ and $\Omega$ be a bounded Sobolev admissible domain in $\mathbb{R}^n$ ($n=2$ or $3$) with a closed $(n-1)-$set boundary $\partial\Omega$. Let $u\in \mathcal{D}_p(-\Delta)$ be the unique solution of the Poisson problem~(\ref{PoissonDir1}) for $f\in L^p(\Omega)$. Then 
$$\Vert u\Vert_{L^{\infty}(\Omega)}\leq C \max\left(1,\frac{1}{a}\right) \Vert f\Vert_{L^p(\Omega)}.$$
\end{theorem}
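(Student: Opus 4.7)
I would use the Stampacchia truncation and iteration technique, adapting Daners' original argument to the Sobolev admissible framework. For $k\ge 0$ set $\phi_k := (u-k)^+$; since $\operatorname{Tr}_{\Gamma_D}u=0$ and $k\ge 0$, this function lies in $V(\Omega)$, and the chain rule gives $\nabla\phi_k = \chi_{A(k)}\nabla u$ where $A(k):=\{x\in\Omega:u(x)>k\}$. Inserting $\phi_k$ as a test function in the weak formulation~(\ref{EqVFPoisson}) and using the pointwise inequality $\operatorname{Tr}u\cdot\operatorname{Tr}\phi_k \ge (\operatorname{Tr}\phi_k)^2$ on $\{\phi_k>0\}\cap\Gamma_R$ (which holds because $\operatorname{Tr}u\ge k\ge 0$ there), I obtain the basic energy estimate
$$\|\phi_k\|_{V(\Omega)}^2 \;\le\; \int_{A(k)} f\,\phi_k\,dx.$$

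Next I bound the right-hand side by H\"older's inequality with exponents $p$, $2^*$, and $r$, where $2^*=2n/(n-2)$ is the Sobolev exponent for $n=3$ (for $n=2$ any sufficiently large finite exponent is admissible by Theorem~\ref{thmsobolembadm}) and $1/r:=1-1/p-1/2^*$. The hypothesis $p>n$ gives $1/r-1/2^* = 2/n-1/p>0$, which is the strict positivity needed to close the iteration. Combining the extension property (Theorem~\ref{thmtradmissdom}), the Sobolev embedding on $\R^n$, and the Poincar\'e inequality in $V(\Omega)$ (available because $m_{n-1}(\Gamma_D)>0$, see Theorem~\ref{inegPoinc}), one derives
$$\|\phi_k\|_{L^{2^*}(\Omega)} \;\le\; C_S\,\max(1,a^{-1/2})\,\|\phi_k\|_{V(\Omega)},$$
where the factor $a^{-1/2}$ arises when the Robin part of the $V$-norm dominates the gradient part. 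Plugging this into the energy estimate gives
$$\|\phi_k\|_{V(\Omega)} \;\le\; C\,\max(1,a^{-1/2})\,\|f\|_{L^p(\Omega)}\,|A(k)|^{1/r}.$$

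Finally, the elementary inequality $(h-k)\chi_{A(h)}\le\phi_k$ for $h>k\ge 0$ combined with the preceding bounds yields
$$|A(h)| \;\le\; \frac{\bigl(C\,\max(1,1/a)\,\|f\|_{L^p(\Omega)}\bigr)^{2^*}}{(h-k)^{2^*}}\,|A(k)|^{2^*/r},$$
and the inequality $2^*/r>1$ puts this into the scope of the classical Stampacchia iteration lemma (e.g. Kinderlehrer-Stampacchia, Lemma~B.1). Applied from $k_0=0$ with $|A(0)|\le\lambda(\Omega)$, it supplies a level $k_\star\le C\max(1,1/a)\|f\|_{L^p(\Omega)}$ with $|A(k_\star)|=0$, hence $u\le k_\star$ almost everywhere in $\Omega$. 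Applying the same argument to the Poisson problem solved by $-u$ with source $-f$ gives the matching lower bound and completes the proof.

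\textbf{Main obstacle.} The delicate point is to make the Sobolev embedding $V(\Omega)\hookrightarrow L^{2^*}(\Omega)$ effective with an explicit constant whose dependence on the Robin weight $a$ is captured exactly by $\max(1,1/a)$. This requires coordinating the Sobolev extension of Theorem~\ref{thmtradmissdom}, which is available because $\Omega$ is an $n$-set with $(n-1)$-set boundary, the classical Sobolev embedding on $\R^n$, and the Poincar\'e inequality of Theorem~\ref{inegPoinc}, while carefully tracking how the passage from the $V(\Omega)$-norm to the plain gradient norm weights the Robin contribution. Once this embedding is in hand with the correct $a$-dependence, the Stampacchia iteration is routine, and the restriction $n\in\{2,3\}$ together with $p>n$ serves only to guarantee the positivity $2/n-1/p>0$ that drives the geometric decay of the super-level sets.
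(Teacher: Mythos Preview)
Your Stampacchia truncation argument is correct and delivers the stated bound. Note, however, that the paper does not actually prove Theorem~\ref{thmDaners}: it is simply quoted as a direct transcription of Daners' result to the Sobolev admissible setting with $(n-1)$-set boundary. What the paper does prove in detail (Appendix~\ref{AppDaners}) is the sharper Theorem~\ref{thmlinfestL2}, where $f\in L^2(\Omega)$ suffices and the constant carries no $a$-dependence at all.

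That proof uses a Moser-type iteration rather than your Stampacchia level-set truncation: one tests the variational formulation with $v=G_{q-1,m}(u)$ (a truncated power of $u$), obtains $\|G_{q/2,m}(u)\|_{L^6}^2\le Cq\|f\|_{L^2}\|v\|_{L^2}$ via the embedding of Proposition~\ref{propapl6}, and then iterates along $q_{n+1}=1+\tfrac{3}{2}q_n$ to reach $L^\infty$. The two techniques are the classical alternatives for $L^\infty$ bounds on weak solutions; your Stampacchia route is arguably more direct and makes the role of the hypothesis $p>n$ transparent through the exponent $2^*/r>1$, while the paper's Moser route has the advantage of working already with $f\in L^2$ in dimensions $2$ and $3$. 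One minor remark on your bookkeeping: since $m_{n-1}(\Gamma_D)>0$, the Poincar\'e inequality on $V(\Omega)$ gives $\|\phi_k\|_{L^{2^*}}\le C\|\nabla\phi_k\|_{L^2}\le C\|\phi_k\|_{V(\Omega)}$ with $C$ independent of $a$, so the factor $\max(1,a^{-1/2})$ you insert in the embedding step is harmless but not actually needed here---precisely the observation the paper exploits to drop the $a$-dependence in Theorem~\ref{thmlinfestL2}.
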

Moreover, for all bounded Sobolev admissible domains, we improve Theorem~\ref{thmDaners} using the following result:
\begin{proposition}\label{propapl6}
Let $\Omega$ be a bounded Sobolev admissible domain in $\mathbb{R}^n$ ($n=2$ or $3$), then for all $u\in V(\Omega)$ the following estimate holds
\begin{equation}\label{L6est}
\Vert u\Vert_{L^6(\Omega)}\leq C \Vert \nabla u\Vert_{L^2(\Omega)},
\end{equation}
where $C>0$ is a constant depending only on $\Omega$. In addition, if $\Omega$ is an $(\varepsilon,\delta)-$domain, then $C>0$ depends only on $\varepsilon$, $\delta$, $n$ and the constant in the Poincar\'e inequality on $V(\Omega)$.
\end{proposition}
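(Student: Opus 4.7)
The plan is to combine a Sobolev embedding $H^1(\Omega)\hookrightarrow L^6(\Omega)$ with a Poincaré-type inequality on $V(\Omega)$ in order to drop the $L^2$ term that naturally appears on the right-hand side of the Sobolev estimate.

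First I would apply Theorem~\ref{thmsobolembadm} with $k=1$ and $p=2$. For $n=3$ one has $kp=2<n$ and the critical exponent $\frac{pn}{n-kp}=6$, so $H^1(\Omega)\hookrightarrow L^6(\Omega)$ continuously. For $n=2$ one has $kp=n$, hence $H^1(\Omega)\hookrightarrow L^q(\Omega)$ for every $q\in[1,\infty)$, in particular $q=6$. This gives a constant $C_1=C_1(\Omega)$ such that
$$\|u\|_{L^6(\Omega)}\leq C_1\bigl(\|\nabla u\|_{L^2(\Omega)}+\|u\|_{L^2(\Omega)}\bigr).$$

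Next I would establish the Poincaré-type inequality $\|u\|_{L^2(\Omega)}\leq C_P\|\nabla u\|_{L^2(\Omega)}$ on $V(\Omega)$. Since Theorem~\ref{inegPoinc} is stated only for $W^{1,p}_0(\Omega)$ and for mean-free functions, I would prove it by the standard compactness-contradiction argument: if it fails, extract a sequence $u_k\in V(\Omega)$ with $\|u_k\|_{L^2}=1$ and $\|\nabla u_k\|_{L^2}\to 0$; the compact embedding $H^1(\Omega)\subset\subset L^2(\Omega)$ and the compactness of $\operatorname{Tr}\colon H^1(\Omega)\to L^2(\partial\Omega)$ from Proposition~\ref{PropComp} produce (after extraction) a limit $u\in H^1(\Omega)$ which is locally constant, with vanishing trace on $\Gamma_D$; assuming $\Omega$ connected, the positivity $m_{n-1}(\Gamma_D)>0$ forces $u\equiv 0$, contradicting $\|u\|_{L^2}=1$. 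Combining this with the Sobolev step yields $\|u\|_{L^6(\Omega)}\leq C_1(1+C_P)\|\nabla u\|_{L^2(\Omega)}$ with $C=C_1(1+C_P)$ depending only on $\Omega$.

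For the refined assertion on $(\varepsilon,\delta)$-domains I would follow the strategy used in the proof of Theorem~\ref{thmPoinc2}: apply Jones's extension $\Lambda\colon W(\Omega)\to W(\R^n)$, which provides $\|\nabla\Lambda u\|_{L^2(\R^n)}\leq C(\varepsilon,\delta,n)\|\nabla u\|_{L^2(\Omega)}$. Fixing a ball $B\supset\overline{\Omega}$ and combining with the Poincaré inequality on $V(\Omega)$ then yields $\|\Lambda u\|_{H^1(B)}\leq C(\varepsilon,\delta,n,C_P)\|\nabla u\|_{L^2(\Omega)}$. Since $B$ is smooth, a classical Sobolev embedding gives $\|\Lambda u\|_{L^6(B)}\leq C\|\Lambda u\|_{H^1(B)}$, and restricting to $\Omega$ produces the claimed bound with constant depending only on $\varepsilon$, $\delta$, $n$, and the Poincaré constant on $V(\Omega)$.

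The main obstacle is ensuring that the Poincaré inequality on $V(\Omega)$ really is available in the Sobolev admissible framework (the statement in Theorem~\ref{inegPoinc} does not cover mixed boundary traces), but this is handled cleanly by the compactness ingredients already proved in Proposition~\ref{PropComp}. A minor additional care is needed in dimension $n=2$ for the $(\varepsilon,\delta)$ statement, since no homogeneous Sobolev inequality is available on $\R^2$; working on a fixed bounded ball $B$ containing $\Omega$ (as above) avoids this difficulty and keeps the constants explicit.
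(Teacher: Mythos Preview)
Your argument follows the same two-step template as the paper: Sobolev embedding $H^1(\Omega)\hookrightarrow L^6(\Omega)$ followed by the Poincar\'e inequality on $V(\Omega)$. The paper simply invokes ``the Poincar\'e inequality on $V(\Omega)$'' without proof, so your compactness--contradiction justification is a welcome addition.

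For the $(\varepsilon,\delta)$ refinement the paper takes a shorter route than yours. Instead of the gradient-only extension $\Lambda:W(\Omega)\to W(\mathbb{R}^n)$ together with an auxiliary ball $B$, it uses Jones's full $H^1$-extension $E_\Omega:H^1(\Omega)\to H^1(\mathbb{R}^n)$, whose operator norm already depends only on $\varepsilon,\delta,n$, then the embedding $H^1(\mathbb{R}^n)\hookrightarrow L^6(\mathbb{R}^n)$ and the trivial restriction to $\Omega$; this gives $\|u\|_{L^6(\Omega)}\le C(\varepsilon,\delta,n)\|u\|_{H^1(\Omega)}$ directly, and Poincar\'e on $V(\Omega)$ finishes. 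Your worry about $n=2$ is therefore unnecessary: the \emph{inhomogeneous} embedding $H^1(\mathbb{R}^2)\hookrightarrow L^6(\mathbb{R}^2)$ does hold (e.g.\ via Gagliardo--Nirenberg), so there is no need to localize to a ball. Your detour through $B$ is workable but quietly brings in the diameter of $\Omega$ via the size of $B$ and the Poincar\'e constant on $B$, which muddies the claimed dependence of $C$ on $(\varepsilon,\delta,n,C_P)$ only; the paper's route avoids this bookkeeping entirely.
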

\begin{proof}
If $\Omega$ is a Sobolev admissible domain, then by Theorem~\ref{thmsobolembadm}, as $n=2$ or $3$, we have by the Sobolev embedding for $u\in V(\Omega)$
$$\Vert u\Vert_{L^6(\Omega)}\leq C\Vert u\Vert_{H^1(\Omega)}$$
and by the Poincar\'e inequality on $V(\Omega)$
$$\Vert u\Vert_{H^1(\Omega)}\leq C(\Omega)\Vert \nabla u\Vert_{L^2(\Omega)}.$$

Now let us treat the case when $\Omega$ is a $(\varepsilon,\delta)-$domain.
According to Ref.~\cite{JONES-1981}, as $\Omega$ is an $(\varepsilon,\delta)$-domain, we have a continuous extension operator
$E_{\Omega}:H^1(\Omega)\rightarrow H^1(\mathbb{R}^n),$
whose norm depends only on $\varepsilon$, $\delta$ and on $n$.
As $n=2$ or $3$ we have the continuous Sobolev embedding
$H^1(\mathbb{R}^n)\subset L^6(\mathbb{R}^n)$,
whose norm only depends on $n$.
Considering the continuous restriction (of norm equal to $1$)
$L^6(\mathbb{R}^n)\subset L^6(\Omega),$
we finally have the estimate
$$\Vert u\Vert_{L^6(\Omega)}\leq C \Vert u\Vert_{H^1(\Omega)}$$
where $C>0$ depends only on $\varepsilon$, $\delta$ and on $n$.
But $u\in V(\Omega)$, so the application of the Poincar\'e inequality allows to conclude.
\end{proof}
Thus we prove the general case
\begin{theorem}\label{thmlinfestL2}
Let $\Omega$ be a bounded Sobolev admissible domain in $\mathbb{R}^n$ ($n=2$ or $3$), $a>0$, $f\in L^2(\Omega)$ and $u\in V(\Omega)$ be the weak solution of~(\ref{PoissonDir1}) in the sense of the variational formulation~\eqref{EqVFPoisson}. Then it holds the estimate
$$\Vert u\Vert_{L^{\infty}(\Omega)}\leq C \Vert f\Vert_{L^2(\Omega)},$$
where the constant $C>0$ depends only on $\Omega$. If in addition $\Omega$ is an $(\varepsilon,\delta)-$domain, then the constant $C$ depends only on $\varepsilon$, $\delta$, $n$ and on the constant from the Poincar\'e inequality on $V(\Omega)$, but not on $a$.
\end{theorem}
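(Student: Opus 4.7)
The plan is to bound $u$ pointwise via Stampacchia's truncation method, with Proposition~\ref{propapl6} serving as the Sobolev-type embedding that drives the iteration. The dimensional restriction $n\leq 3$ enters precisely at the estimate $V(\Omega)\hookrightarrow L^6(\Omega)$, which is what allows us to replace the $L^p$ hypothesis ($p>n$) of Theorem~\ref{thmDaners} by the weaker $L^2$ hypothesis.

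For $k\geq 0$ fixed, I would use $u_k:=(u-k)_+$ as test function in~(\ref{EqVFPoisson}). Since $\operatorname{Tr}u=0$ on $\Gamma_D$, the same holds for $\operatorname{Tr}u_k$ when $k\geq 0$, so $u_k\in V(\Omega)$. Writing $\operatorname{Tr}u=(\operatorname{Tr}u-k)+k$, on $\Gamma_R$
\begin{equation*}
\operatorname{Tr}u\,\operatorname{Tr}u_k=((\operatorname{Tr}u-k)_+)^2+k(\operatorname{Tr}u-k)_+\geq 0,
\end{equation*}
so the Robin term is nonnegative and may be discarded. Setting $\Omega_k:=\{u>k\}$ and applying H\"older's inequality with exponents $2$, $3$, $6$,
\begin{equation*}
\|\nabla u_k\|_{L^2(\Omega)}^2\leq \int_{\Omega_k}f\,u_k\,dx\leq \|f\|_{L^2(\Omega)}\,|\Omega_k|^{1/3}\,\|u_k\|_{L^6(\Omega)}.
\end{equation*}
Combining with $\|u_k\|_{L^6(\Omega)}\leq C\|\nabla u_k\|_{L^2(\Omega)}$ from Proposition~\ref{propapl6} applied to $u_k\in V(\Omega)$ yields
\begin{equation*}
\|u_k\|_{L^6(\Omega)}\leq C^2\|f\|_{L^2(\Omega)}\,|\Omega_k|^{1/3}.
\end{equation*}

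Chebyshev's inequality then gives, for all $h>k\geq 0$,
\begin{equation*}
\phi(h)(h-k)^6\leq \int_{\Omega_h}(u-k)^6\,dx\leq \|u_k\|_{L^6(\Omega)}^6\leq C^{12}\|f\|_{L^2(\Omega)}^6\,\phi(k)^2,
\end{equation*}
where $\phi(k):=|\Omega_k|$ is non-increasing. This is Stampacchia's inequality with exponent $\beta=2>1$, so Stampacchia's lemma produces a finite level $k^*\leq C_0\|f\|_{L^2(\Omega)}|\Omega|^{1/6}$ with $\phi(k^*)=0$; hence $u\leq k^*$ a.e. Applying the same argument with $f$ replaced by $-f$ and $u$ by $-u$ yields the two-sided bound $\|u\|_{L^\infty(\Omega)}\leq C_0\|f\|_{L^2(\Omega)}|\Omega|^{1/6}$.

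The main subtleties I anticipate are twofold. First, justifying $u_k\in V(\Omega)$ requires the trace to commute with the positive part; this extends from smooth functions by density and the continuity of the trace given in Theorem~\ref{thmtradmissdom}. Second, the independence of the constant from $a$ is automatic because the Robin contribution was discarded with a favorable sign, which is precisely where the present result strengthens Theorem~\ref{thmDaners}. For the $(\varepsilon,\delta)$ statement one invokes the refined dependence in Proposition~\ref{propapl6}, so that $C_0$ reduces to a constant depending only on $\varepsilon$, $\delta$, $n$ and the Poincar\'e constant on $V(\Omega)$.
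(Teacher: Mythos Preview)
Your argument is correct and complete: the Stampacchia level-set iteration you outline goes through exactly as written, and the two subtleties you flag (trace commuting with the positive part, and the sign of the Robin term) are the right ones to check.

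The paper, however, takes a different but equally classical route: a Moser-type iteration on exponents rather than on levels. It tests~(\ref{EqVFPoisson}) with truncated powers $v=G_{q-1,m}(u)$, obtains (after discarding the nonnegative Robin term, as you do) the recursive bound
\[
\Vert w^{2/q}\Vert_{L^{3q}(\Omega)}^{q}\leq C\,q\,\Vert f\Vert_{L^2(\Omega)}\,\Vert w^{2/q}\Vert_{L^{2(q-1)}(\Omega)}^{q-1},
\]
and then iterates along the sequence $q_{n+1}=1+\tfrac{3}{2}q_n$ to push the Lebesgue exponent to infinity. Both methods hinge on the same ingredient, Proposition~\ref{propapl6}, and both give the same dependence of the constant (on $\Omega$ only, and in the $(\varepsilon,\delta)$ case on $\varepsilon$, $\delta$, $n$ and the Poincar\'e constant, independently of $a$). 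Your Stampacchia approach is somewhat shorter and more self-contained, since it avoids the bookkeeping of the exponent recursion; the paper's Moser iteration follows Daners~\cite{Daners} more closely and would generalize more readily if one wanted sharper quantitative information on the constant.
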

The proof is a simplified variant of the proof of Theorem~4.1 of Ref.~\cite{Daners}. It is given for the completeness of the article in Appendix~\ref{AppDaners}.

\section{Well posedness of the damped linear wave equation}\label{secwpdampwavmix}
\subsection{Well posedness and $L^2$ regularity}\label{subsecweakwpdampwavemix}
In this subsection we suppose that $\Omega$ is a bounded Sobolev admissible domain in $\mathbb{R}^n$ on which we consider the following linear strongly damped wave equation in the previous framework of mixed boundary conditions:
\begin{equation}\label{dampedwaveeqmix}
\left\lbrace
\begin{array}{l}
u_{tt}-c^2 \Delta u- \nu \Delta u_t=f\;\hbox{ on }\;]0,+\infty[\times\Omega,\\
u=0 \hbox{ on }\Gamma_D\times[0,+\infty[,\\
\frac{\partial}{\partial n}u=0\hbox{ on }\Gamma_N\times[0,+\infty[,\\
\frac{\partial}{\partial n}u+a u=0\hbox{ on }\Gamma_R\times[0,+\infty[,\\
u(0)=u_0,\;\;u_t(0)=u_1 \hbox{ in } \Omega.
\end{array}
\right.
\end{equation}
We are looking for weak solutions of system~(\ref{dampedwaveeqmix}) in the following sense:
\begin{definition}\label{defweakdampwav}
For $f\in L^2([0,+\infty[;L^2(\Omega))$, $u_0\in V(\Omega)$, and $u_1\in L^2(\Omega)$, where $V(\Omega)$ defined in~(\ref{eqVOmega}),
we say that a function $u\in L^2([0,+\infty[;V(\Omega))$ with $\partial_t u \in L^2([0,+\infty[;V(\Omega))$ and $\partial^2_t u\in L^2([0,+\infty[;H^{-1}(\Omega)$ is a weak solution of problem~(\ref{dampedwaveeqmix}) if 
for all $v\in L^2([0,+\infty[;V(\Omega))$
\begin{equation}\label{varformdampedwaveeqdirhom}
\int_0^{+\infty}\langle u_{tt},v\rangle_{(H^{-1}(\Omega),V(\Omega))} +c^2 (u,v)_{V(\Omega)}+\nu ( u_t, v)_{V(\Omega)} ds=\int_0^{+\infty} (f,v)_{L^2(\Omega)}ds,
\end{equation}
with $ u(0)=u_0$ and $u_t(0)=u_1.$
\end{definition}
To prove the existence and uniqueness of such a weak solution, we use the Galerkin method and follow~\cite[p.~379--387]{EVANS-2010} using the fact that the Poincar\'e inequality stays true on $V(\Omega)$. 
To perform the Galerkin method we select functions $w_k=w_k(x)$, $k\in \N^*$ as the normalized eigenfunctions of the operator $-\Delta$ on $\Omega$ with the mixed boundary conditions, defined in Theorem~\ref{thmeigenfuncLaprob}: 
\begin{equation*}\label{eigenfuncDelta}
-\Delta w_k=\lambda_k w_k \hbox{ in a weak sense as }\forall w\in V(\Omega) \quad ( w_k, w)_{V(\Omega)}=\lambda_k (w_k,w)_{L^2(\Omega)}
\end{equation*}
and define then for a fixed $m\in \N^*$ the finite approximation of $u$ by
\begin{equation}\label{Galerksol}
u_m(t):=\sum_{i=1}^m d^k_m(t) w_k,
\end{equation}
where the coefficients $d^k_m(t)\in H^2(]0,+\infty[)$, $t \geq 0,\;k=1,...,m$ satisfy
\begin{align}
d^{k}_m(0)&=(u_0,w_k)_{L^2(\Omega)}\in\mathbb{R}\; k=1,...,m,\label{Galerkinit1}\\
\partial_t d^{k}_m(0)&=(u_1,w_k)_{L^2(\Omega)}\in\mathbb{R}\;k=1,...,m\label{Galrkinit2}
\end{align}
 and $u_m$ for $t \geq 0,\;k=1,...,m$ solves
\begin{equation}\label{Galerkdampedeq}
(\partial^2_t u_m,w_k)_{L^2(\Omega)}+c^2 ( u_m, w_k)_{V(\Omega)}+\nu ( \partial_t u_m, w_k)_{V(\Omega)}=(f,w_k)_{L^2(\Omega)}.
\end{equation}
As the rest of the proof is standard and repeat a lot~\cite{EVANS-2010} it is omitted but can be found in~\cite{dekthese}. Let us focus now on the regularity of such a solution.
For the weak solution of the damped wave equation problem~(\ref{dampedwaveeqmix}), satisfying Definition~\ref{defweakdampwav}, we have the following regularity results (for the proof see~\cite{dekthese}):
\begin{theorem}\label{dampedwaveeqregint1}
Let $\Omega$ be a Sobolev admissible bounded domain in $\mathbb{R}^n$ ($n\geq2$). Then there exists 
the unique weak solution $u$ of the strongly damped wave equation problem~(\ref{dampedwaveeqmix}) in the sense of Definition~\ref{defweakdampwav}.
Moreover,
\\(i) in addition $u$ has the following regularity
$$u\in L^{\infty}([0,+\infty[;V(\Omega)),\;\;\partial_t u\in L^{\infty}([0,+\infty[;L^2(\Omega))$$
and satisfies the estimate 
\begin{eqnarray}
\operatorname{ess} \sup_{t \geq 0}(\Vert u(t)\Vert_{V(\Omega)} && +\Vert\partial_t u(t)\Vert_{L^2(\Omega)})+\int_0^{+\infty} \Vert \partial_t u(s)\Vert_{V(\Omega)}\;ds+ \Vert\partial^2_t u\Vert_{ L^2([0,+\infty[;H^{-1}(\Omega))}\nonumber\\
&&\leq C ( \Vert f\Vert_{L^2([0,+\infty[;L^2(\Omega))}+\Vert u_0\Vert_{H^1_0(\Omega)}+\Vert u_1\Vert_{L^2(\Omega)}).\nonumber
\end{eqnarray}
(ii) If the initial data are taken more regular
$$ u_0\in \mathcal{D}_2(-\Delta),\;\;u_1\in V(\Omega),$$ 
 where $\mathcal{D}_2(-\Delta)$ comes from Definition~\ref{defdomLapl} with $p=2$, then in addition to the previous point the weak solution satisfies 
\begin{eqnarray}
&&\partial_t u\in L^{\infty}([0,+\infty[;V(\Omega)), \quad \partial^2_t u\in L^{2}([0,+\infty[;L^2(\Omega)),\nonumber\\
&&\Delta u \in L^{\infty}([0,+\infty[;L^2(\Omega))\cap L^2( [0,+\infty[;L^2(\Omega)),\nonumber\\
&&\Delta \partial_t u \in L^2( [0,+\infty[;L^2(\Omega))\nonumber
\end{eqnarray}
with the estimates
\begin{eqnarray}
\operatorname{ess}\sup_{t\geq 0}(\Vert\Delta u(t)\Vert_{L^2(\Omega)}^2&&+ \Vert \partial_t u(t)\Vert_{V(\Omega)}^2 )+\int_0^{\infty}\Vert\Delta\partial_t u(s)\Vert_{L^2(\Omega)}^2 \;ds \nonumber\\
\leq && C ( \Vert f\Vert_{L^2([0,+\infty[;L^2(\Omega))}^2+\Vert\Delta u_0\Vert_{L^2(\Omega)}^2+\Vert u_1\Vert_{V(\Omega)}^2)\label{aprioriglobdampedwave}
\end{eqnarray}
and 
\begin{equation}\label{aprioriglobdampedwavebis}
\int_0^{+\infty} \Vert \Delta u(s)\Vert_{L^2(\Omega)}^2\;ds\leq C ( \Vert f\Vert_{L^2([0,+\infty[;L^2(\Omega))}^2+\Vert\Delta u_0\Vert_{L^2(\Omega)}^2+\Vert u_1\Vert_{V(\Omega)}^2),
\end{equation}
where the constants $C>0$ depend only on $\Omega$ and more precisely of the constant in the Poincar\'e inequality on $V(\Omega)$.
\end{theorem}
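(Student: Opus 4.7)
The plan is a Faedo--Galerkin scheme in the orthonormal basis $(w_k)_{k\in\N^*}$ of eigenfunctions of $-\Delta$ with the mixed boundary conditions provided by Theorem~\ref{thmeigenfuncLaprob}, following~\cite{EVANS-2010} but adapted to the fact that no $H^2$-regularity up to $\partial\Omega$ is available on a Sobolev admissible domain. System~(\ref{Galerkinit1})--(\ref{Galerkdampedeq}) is a linear second-order ODE system with constant coefficients in the $d_m^k(t)$, hence admits a unique global solution $d_m^k\in H^2_{\mathrm{loc}}([0,+\infty[)$. Multiplying~(\ref{Galerkdampedeq}) by $\partial_t d_m^k$ and summing in $k$ produces the basic energy identity
\begin{equation*}
\tfrac{1}{2}\tfrac{d}{dt}\bigl(\|\partial_t u_m\|_{L^2(\Omega)}^2 + c^2\|u_m\|_{V(\Omega)}^2\bigr) + \nu\|\partial_t u_m\|_{V(\Omega)}^2 = (f,\partial_t u_m)_{L^2(\Omega)},
\end{equation*}
from which Young's inequality and integration yield uniform bounds on $u_m$ in $L^\infty(0,+\infty;V(\Omega))$, on $\partial_t u_m$ in $L^\infty(0,+\infty;L^2(\Omega))\cap L^2(0,+\infty;V(\Omega))$, and, by reading~(\ref{Galerkdampedeq}) as an identification of $\partial_t^2 u_m$ with a linear form on $V(\Omega)$, on $\partial_t^2 u_m$ in $L^2(0,+\infty;H^{-1}(\Omega))$. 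Standard weak and weak-$*$ compactness combined with the Aubin--Lions lemma (applied through the compact embedding of Proposition~\ref{PropComp}) yield a limit $u$ satisfying Definition~\ref{defweakdampwav} together with the estimates of part~(i). Uniqueness follows by testing the variational formulation for the difference of two solutions against its own time derivative and invoking Gronwall.

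For the stronger regularity of part~(ii) the crucial point is that although $u$ itself does not admit $H^2$-regularity, each Galerkin iterate $u_m$ lies in the finite-dimensional span of the eigenfunctions $w_k\in\mathcal{D}(-\Delta)$, so that $-\Delta u_m=\sum_k\lambda_k d_m^k w_k$ is a legitimate element of $V(\Omega)$ and an admissible multiplier. Using $(w_k,v)_{V(\Omega)}=\lambda_k(w_k,v)_{L^2(\Omega)}$ together with orthonormality of $(w_k)$ in $L^2(\Omega)$, multiplying~(\ref{Galerkdampedeq}) by $\lambda_k\partial_t d_m^k$ and summing yields the higher-order identity
\begin{equation*}
\tfrac{1}{2}\tfrac{d}{dt}\bigl(\|\partial_t u_m\|_{V(\Omega)}^2 + c^2\|\Delta u_m\|_{L^2(\Omega)}^2\bigr) + \nu\|\Delta\partial_t u_m\|_{L^2(\Omega)}^2 = (f,-\Delta\partial_t u_m)_{L^2(\Omega)},
\end{equation*}
which, after absorbing the right-hand side by Young's inequality and integrating in time, delivers~(\ref{aprioriglobdampedwave}). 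A complementary estimate obtained by testing~(\ref{Galerkdampedeq}) with $\partial_t^2 d_m^k$ and using the already-established bounds produces $\partial_t^2 u_m\in L^2(0,+\infty;L^2(\Omega))$ uniformly in $m$, and~(\ref{aprioriglobdampedwavebis}) then follows by reading~(\ref{Galerkdampedeq}) as $c^2\Delta u_m=\partial_t^2 u_m-\nu\Delta\partial_t u_m-f$ and taking $L^2(0,+\infty;L^2(\Omega))$-norms. Compatibility of the initial data, $u_m(0)\to u_0$ in $\mathcal{D}(-\Delta)$ and $\partial_t u_m(0)\to u_1$ in $V(\Omega)$, follows from $u_0\in\mathcal{D}(-\Delta)$ and $u_1\in V(\Omega)$ by a spectral truncation argument, the $(w_k)$ being simultaneously $L^2$-orthonormal and $V$-orthogonal.

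The main obstacle is precisely the absence of $H^2$-regularity on a general Sobolev admissible (in particular, fractal) domain, which rules out the usual multiplier $-\Delta\partial_t u$ at the level of the limiting equation. I sidestep it by performing all higher-order integrations by parts exclusively at the Galerkin level, where they reduce to the spectral identity above and therefore require no boundary regularity whatsoever; the resulting bounds are then transferred to $u$ by weak lower semicontinuity of the norms involved, and the Green formula of Proposition~\ref{Green} is invoked only in the form it actually applies, namely to functions whose Laplacian lies in $L^2(\Omega)$, which is exactly the regularity recovered for $u$ in (ii).
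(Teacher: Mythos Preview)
Your proposal is correct and coincides with the paper's approach: the paper sets up the Galerkin scheme in the eigenbasis of Theorem~\ref{thmeigenfuncLaprob} and then omits the remainder as standard, referring to~\cite{EVANS-2010} and~\cite{dekthese}. The energy identities you write (testing with $\partial_t u_m$ for~(i) and with $-\Delta\partial_t u_m=\sum_k\lambda_k\partial_t d_m^k w_k$ for~(ii)), together with the observation that all higher-order manipulations are carried out at the spectral level and hence bypass the lack of $H^2$-regularity, are exactly the omitted details.
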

An important corollary of this theorem is that it is possible to define a functional space of solutions of~(\ref{varformdampedwaveeqdirhom}) for the homogeneous initial data, which is isomorphic to space $L^2([0,+\infty[;L^2(\Omega))$ of the source terms.

\begin{theorem}\label{thmeqivcaudampwav}
For $\Omega$ a bounded Sobolev admissible domain in $\mathbb{R}^n$, let $-\Delta$ be the Laplacian operator associated with the mixed boundary conditions by~\eqref{EqVFPoisson}. Let
\begin{equation}\label{solspacedirhom}
X:= H^1(]0,+\infty[;\mathcal{D}_2(-\Delta))\cap H^2(]0,+\infty[;L^2(\Omega))
\end{equation}
be endowed with the norm (see Definition~\ref{defdomLapl} for $p=2$)
\begin{equation}
 \|u\|^2_X=\int_0^{+\infty}(\|\Delta u(t)\|^2_{L^2(\Omega)}+\|\del_t \Delta u(t)\|^2_{L^2(\Omega)}+\|\del_t u(t)\|^2_{V(\Omega)}+\|u(t)\|^2_{V(\Omega)}+\|\del_t^2u(t)\|^2_{L^2(\Omega)})\dt,
\end{equation}
and $X_0=\lbrace u\in X\vert u(0)=0,\;\partial_t u(0)=0\rbrace.$
Then there exists a unique weak solution $u\in X_0$ in the sense of formulation~(\ref{varformdampedwaveeqdirhom})
 of the boundary-valued problem~(\ref{dampedwaveeqmix}) with $u_0=u_1=0$
if and only if
$f\in L^2([0,+\infty[;L^2(\Omega))$.
Moreover the following estimate holds
$$\Vert u\Vert_{X}\leq C \Vert f\Vert_{L^2([0,+\infty[;L^2(\Omega))},$$
where $C>0$ depends only on $\Omega$.
\end{theorem}
\begin{proof}
As functions on time $t\mapsto u(t) \in H^1(]0,+\infty[)$ and $t\mapsto u'(t) \in H^1(]0,+\infty[)$ by the definition of $X$, we have, by Sobolev embedding theorem in one dimensional case, that for all $T>0$ $H^1(]0,T[)\subset C([0,T])$ and consequently for all $u\in X$ the trace values $u(0)$ and $\partial_t u(0)$ are well-defined.

 Theorem \ref{dampedwaveeqregint1} gives us directly one side of the equivalence. If $f\in L^2([0,+\infty[;L^2(\Omega))$, $u_0=0$ and $u_1=0$ there exists a unique $u$ weak solution of~(\ref{varformdampedwaveeqdirhom}) with $\partial^2_t u$, $\Delta u$ and $\Delta\partial_t u$ in $L^2([0,+\infty[;L^2(\Omega))$, along with the estimates~(\ref{aprioriglobdampedwave}) and~(\ref{aprioriglobdampedwavebis}) which implies $u\in X_0$ with the desired estimate.

 Now let us consider a weak solution $u\in X_0$ satisfying Definition~\ref{defweakdampwav}. By linearity $u$ is unique and, by regularity of $u$, from~(\ref{varformdampedwaveeqdirhom}) we have
$f\in L^2([0,+\infty[;L^2(\Omega)).$ 
\end{proof}
\subsection{$L^p$ regularity}
For $p \geq2$ we have $L^p(\Omega)\subset L^2(\Omega)$ and, by Theorem~\ref{thmeigenfuncLaprob} and by~\cite[Corollary~5.5]{Daners}, the spectrum of $-\Delta$ in $L^p(\Omega)$ is contained in $\mathbb{R}^*_+$. 
We give a result on maximal $L^p$ regularity, which is a direct application of Theorem~$4.1$ in Ref.~\cite{Srivastava} to the linear system for the strongly damped wave equation with mixed boundary conditions and homogeneous initial data~(\ref{dampedwaveeqmix}):
\begin{theorem}\label{thmmaxlpreg1}
Let $-\Delta$ be defined on $\mathcal{D}_p(-\Delta))$ as the $L^p$-realisation of the Laplacian for mixed boundary conditions by Definition~\ref{defdomLapl}. 
For $p\geq 2$ and $T>0$, there exits a unique weak solution $u\in X^p_0$ with 
\begin{equation}\label{solsparobadmdom}
X^p:=W^{1,p}([0,T];\mathcal{D}_p(-\Delta))\cap W^{2,p}([0,T];L^p(\Omega))
\end{equation}
endowed with the norm
\begin{equation}\label{EqNormXp}
 \|u\|^p_{X^p}=\int_0^{T}(\|\Delta u(t)\|^p_{L^p(\Omega)}+\|\del_t \Delta u(t)\|^p_{L^p(\Omega)}+\|\del_t u(t)\|^p_{L^p(\Omega)}+\|\del_t^2u(t)\|^p_{L^p(\Omega)})\dt,
\end{equation}
and
$$X^p_0:=\lbrace u\in X^p\vert u(0)=0,\;\partial_t u(0)=0\rbrace $$
 of the mixed boundary-valued problem~(\ref{dampedwaveeqmix}) with $u_0=u_1=0$
if and only if $f\in L^p([0,T];L^p(\Omega))$.
Moreover it holds the estimate
$$\Vert u\Vert_{X^p}\leq C \Vert f\Vert_{L^p([0,T];L^p(\Omega))}.$$
\end{theorem}
\begin{proof}
We observe that~\eqref{EqNormXp} is a norm thanks to the uniqueness of the weak solution of the Poisson mixed problem in the $L^p$ framework. In addition, let us notice that the time traces $u(0)$ and $\del_t u(0)$ are well-defined as, by their regularity in $X^p$, these functions are continuous on time.

The statement is a result of maximal $L^p$ regularity. It can be proved by different general methods~\cite[Section~4]{Batty} for abstract evolutive problems. It is also possible to use the abstract general result~\cite[Theorem~4.1]{Srivastava} involving theory of UMD spaces. By~\cite[Theorem~4.1]{Srivastava}, as $-\Delta$ is a sectorial operator on $L^p(\Omega)$ which admits a bounded $RH^{\infty}$ functional calculus of angle $\beta$ with $0<\beta<\frac{\pi}{2}$, then
system~(\ref{dampedwaveeqmix}) considered with $u_0=u_1=0$ has $L^p$-maximal regularity. 

UMD spaces have been introduced in~\cite{Bourgain}. By~\cite{Kalton}, if $A$ is a sectorial operator on an UMD space $X$ with property $(\alpha)$ and admits a bounded $H^{\infty}$ calculus of angle $\beta$, then $A$ admits a $RH^{\infty}$ calculus of angle $\beta$. For the definition of Banach spaces having property $(\alpha)$ see~\cite{Pisier}. For $p>1$, $L^p(\Omega)$ is an UMD space having property $(\alpha)$ according to~\cite[p.~752]{Srivastava}. 

 Thanks to~\cite[Thm.~5.6]{Arendt3}, the operator $-\Delta$ is a sectorial operator on $L^p(\Omega)$ which admits a bounded $H^{\infty}$ calculus of angle $\beta$ with $\beta<\frac{\pi}{2}$. The key point according to Theorem~\ref{thmeigenfuncLaprob}, holding on Sobolev admissible domains, is that we have for $z\in \mathbb{C}$ such that $\vert arg(z)\vert <\frac{\pi}{2}$
$$\Vert e^{z\Delta}\Vert_{L^2\rightarrow L^2}\leq e^{-\lambda_1 \vert z\vert} $$
with $\lambda_1>0$. The estimate in Theorem~\ref{thmmaxlpreg1} is a consequence of the closed graph theorem.
\end{proof}
Now we consider the non-homogeneous damped wave problem~(\ref{dampedwaveeqmix}):
\begin{theorem}\label{thmmaxlpreg2}
For $p\geq 2$ and $T>0$, let $X^p$ be defined by~(\ref{solsparobadmdom}). Moreover let us consider all elements of $L^p(\Omega)\times L^p(\Omega)$ which could be viewed as the traces at $t=0$ of an element of $X^p$ and of its time derivative:
\begin{equation}\label{tracesolsparobadmdom}
(L^p(\Omega),\mathcal{D}_p(-\Delta))_p=\lbrace (u_0,u_1)\in L^p(\Omega)\times L^p(\Omega)\vert \;\exists v\in X^p \hbox{ with }v(0)=u_0, v_t(0)=u_1\rbrace.
\end{equation}
Then there exits a unique weak solution $u\in X^p$
 of the damped wave equation problem~(\ref{dampedwaveeqmix})
if and only if $f\in L^p([0,T];L^p(\Omega))$ and $(u_0,u_1)\in (L^p(\Omega),\mathcal{D}_p(-\Delta))_p$.
Moreover we have the estimate
$$\Vert u\Vert_{X^p}\leq C (\Vert f\Vert_{L^p([0,T];L^p(\Omega))}+\Vert (u_0,u_1)\Vert_{(L^p(\Omega),\mathcal{D}_p(-\Delta))_p}).$$
\end{theorem}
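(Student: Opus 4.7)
The plan is to reduce the inhomogeneous initial data problem to the homogeneous one already solved in Theorem~\ref{thmmaxlpreg1} via a standard lifting argument. By the very definition~(\ref{tracesolsparobadmdom}) of the space $(L^p(\Omega),\mathcal{D}(-\Delta))_p$, the data $(u_0,u_1)$ lie in this space if and only if there exists a lift $w\in X^p$ with $w(0)=u_0$ and $w_t(0)=u_1$, and one equips this trace space with the quotient norm $\|(u_0,u_1)\|_{(L^p(\Omega),\mathcal{D}(-\Delta))_p}=\inf\{\|w\|_{X^p}:w\in X^p,\ w(0)=u_0,\ w_t(0)=u_1\}$. This is precisely the structure which makes the reduction available ``for free''.

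For the sufficiency, given such a lift $w$, I would set $v:=u-w$ and look for $v\in X^p_0$ solving the shifted equation
$$v_{tt}-c^2\Delta v-\nu\Delta v_t=\tilde f,\qquad \tilde f:=f-w_{tt}+c^2\Delta w+\nu\Delta w_t,$$
with the same mixed boundary conditions and zero initial data. The key verification is that $\tilde f\in L^p([0,T];L^p(\Omega))$: the term $f$ is assumed so; $w_{tt}\in L^p([0,T];L^p(\Omega))$ because $w\in W^{2,p}([0,T];L^p(\Omega))$; and $\Delta w,\ \Delta w_t\in L^p([0,T];L^p(\Omega))$ because $w\in W^{1,p}([0,T];\mathcal{D}(-\Delta))$ forces both $w$ and $w_t$ to lie in $L^p([0,T];\mathcal{D}(-\Delta))$, where by Definition~\ref{defdomLapl} the norm is $\|\Delta\cdot\|_{L^p(\Omega)}$. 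Theorem~\ref{thmmaxlpreg1} then yields a unique $v\in X^p_0$ with $\|v\|_{X^p}\le C\|\tilde f\|_{L^p([0,T];L^p(\Omega))}$, and $u:=v+w$ is the desired element of $X^p$.

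The necessity is immediate from the definition of $X^p$: if $u\in X^p$ is a weak solution, then by the continuous embedding $W^{1,p}([0,T];L^p(\Omega))\hookrightarrow C([0,T];L^p(\Omega))$ the traces $u(0)$ and $u_t(0)$ are well defined in $L^p(\Omega)$ and the pair $(u(0),u_t(0))$ lies in $(L^p(\Omega),\mathcal{D}(-\Delta))_p$ with $u$ itself serving as the witnessing lift, while the equation rearranged as $f=u_{tt}-c^2\Delta u-\nu\Delta u_t$ has each right-hand term in $L^p([0,T];L^p(\Omega))$. Uniqueness of the weak solution follows by linearity: two solutions differ by an element of $X^p_0$ that solves the homogeneous problem with zero source, hence vanishes by Theorem~\ref{thmmaxlpreg1}.

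For the estimate, given any lift $w$ of $(u_0,u_1)$, the triangle inequality together with the bound for $v$ gives
$$\|u\|_{X^p}\le \|v\|_{X^p}+\|w\|_{X^p}\le C\bigl(\|f\|_{L^p([0,T];L^p(\Omega))}+\|w\|_{X^p}\bigr)+\|w\|_{X^p}.$$
Taking the infimum over admissible lifts $w$ yields the claimed inequality with $\|(u_0,u_1)\|_{(L^p(\Omega),\mathcal{D}(-\Delta))_p}$. I expect the only genuinely delicate point to be the fact that the lifting $w\in X^p$ of prescribed traces exists and can be chosen with norm essentially equal to the trace norm; but this is built into the definition of the trace space~(\ref{tracesolsparobadmdom}), so the argument reduces to the homogeneous maximal regularity of Theorem~\ref{thmmaxlpreg1} and requires no further spectral input.
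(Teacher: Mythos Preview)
Your proof is correct and follows essentially the same lifting argument as the paper: pick $w\in X^p$ realizing the initial data, solve the zero-data problem for $v=u-w$ via Theorem~\ref{thmmaxlpreg1}, and deduce uniqueness and necessity directly from the definitions. The only minor difference is that the paper obtains the final estimate by invoking the closed graph theorem, whereas you derive it explicitly by the triangle inequality and infimizing over lifts; both are valid and yield the same conclusion.
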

\begin{proof}
For $(u_0,u_1)\in (L^p(\Omega),\mathcal{D}_p(-\Delta))_p$, we have by definition $w\in X^p$ such that 
$$w(0)=u_0\quad\hbox{and}\quad w_t(0)=u_1.$$
 Thus, the set $(L^p(\Omega),\mathcal{D}_p(-\Delta))_p$ gives us all situable $u_0$ and $u_1$ in $L^p(\Omega)$ for the initial data of~(\ref{dampedwaveeqmix}). 
 In particular, 
 $$\partial_t^2 w-c^2 \Delta w-\nu \Delta\partial_t w\in L^p([0,T];L^p(\Omega)) .$$ 
So in the sense of Theorem \ref{thmmaxlpreg1}, if we take $\tilde{w}$ the unique weak solution in $X^p$ of 
$$\left\lbrace
\begin{array}{l}
\partial_t^2 \tilde{w}-c^2 \Delta\tilde{w}-\nu \Delta\partial_t\tilde{w}=f-(\partial_t^2 w-c^2 \Delta w-\nu \Delta\partial_t w)\hbox{ on }[0,T]\times\Omega,\\
\frac{\partial }{\partial n}\tilde{w}+a \tilde{w}=0\hbox{ on }[0,T]\times\partial\Omega,\\
\tilde{w}(0)=\partial_t \tilde{w}(0)=0\hbox{ in }\Omega,
\end{array}
\right.
$$
we have by the linearity $u=w+\tilde{w}$ which is the weak solution of the damped wave equation problem~(\ref{dampedwaveeqmix}). The unicity comes from the unicity of the solution when $u_0=u_1=0$ by Theorem~\ref{thmmaxlpreg1}. The other side of the equivalence comes directly from the definition of $X^p$ and $(L^p(\Omega),\mathcal{D}_p(-\Delta))_p$.

The estimate is a consequence of the closed graph theorem. 
\end{proof}
\begin{remark}
Since 
$$\mathcal{D}_p(-\Delta)\times\mathcal{D}_p(-\Delta)\subset (L^p(\Omega),\mathcal{D}_p(-\Delta))_p$$
we have a similar estimate in Theorem \ref{thmmaxlpreg2} for the solutions of the damped wave equation problem~(\ref{dampedwaveeqmix}), when $(u_0,u_1)\in \mathcal{D}_p(-\Delta)\times\mathcal{D}_p(-\Delta)$ replacing $\Vert (u_0,u_1)\Vert_{(L^p(\Omega),\mathcal{D}_p(-\Delta))_p})$ by $\Vert u_0\Vert_{\mathcal{D}_p(-\Delta)}+\Vert u_1\Vert_{\mathcal{D}_p(-\Delta)}$.
\end{remark}
\section{Well-posedness of the Westervelt equation}\label{secwpWesdirmix}

In this section, $\Omega$ is a bounded Sobolev admissible domain in $\mathbb{R}^2$ or $\mathbb{R}^3$.

To be able to give a sharp estimate of the smallness of the initial data and at the same time to estimate the bound of the corresponding solution of the Westervelt equation~\eqref{EqWest}, we use the following theorem~\cite{Sukhinin}:
\begin{theorem}[Sukhinin]\label{thSuh}
 Let $X$ be a Banach space, let $Y$ be a separable
topological vector space, let $L : X \rightarrow Y$ be a linear
continuous operator, let $U$ be the open unit ball in $X$, let ${\rm
P}_{LU}:LX \to [0,\infty [$ be the Minkowski functional of the set
$LU$, and let $\Phi :X \to LX$ be a mapping satisfying the condition
\begin{equation*}
 {\rm P}_{LU} \bigl(\Phi (x) -\Phi (\bar{x})\bigr) \leq
\Theta (r) \left\|x -\bar{x} \right\|\quad \hbox{for} \quad \left\|x
-x_0 \right\| \leqslant r,\quad \left\|\bar{x} -x_0 \right\| \leq r
\end{equation*} for some $x_0 \in X,$ where $\Theta :[0,\infty [ \to [0,\infty [$ is a monotone
non-decreasing function. Set $b(r) =\max \bigl(1 -\Theta (r),0
\bigr)$ for $r \geq 0$.

 Suppose that $$w =\int\limits_0^\infty b(r)\,dr \in ]0,\infty ], \quad r_* =\sup \{ r
\geq 0|\;b(r) >0 \},$$

$$w(r) =\int\limits_0^r b(t)dt \quad (r \geq 0) \quad\hbox{and} \quad g(x) =Lx
+\Phi(x) \quad \hbox{for} \quad x \in X.$$
Then for any $r \in
[0,r_*[$ and $ y \in g(x_0) +w(r)LU$, there exists an
 $ x \in x_0 +rU$ such that $g(x) =y$.
\end{theorem}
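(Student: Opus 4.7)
The plan is to prove the theorem by a damped Newton-type iteration whose discrete dynamics approximate a scalar ODE whose integral is precisely $w$. The starting observation is that the Minkowski functional satisfies $P_{LU}(z) = \inf\{\|\xi\| : \xi \in X,\ L\xi = z\}$ for $z \in LX$, so for any such $z$ and any $\eta > 0$ there exists $\xi \in X$ with $L\xi = z$ and $\|\xi\| \leq P_{LU}(z) + \eta$. Since by hypothesis $y - g(x_0) \in w(r)\,LU$, I would start by choosing $\delta_0 \in X$ with $L\delta_0 = y - g(x_0)$ and $\|\delta_0\| < w(r)$, then inductively pick $\delta_n \in X$ with $L\delta_n = y - g(x_n)$ and $\|\delta_n\| \leq P_{LU}(y - g(x_n)) + \eta_n$, and set $x_{n+1} := x_n + \lambda \delta_n$ for a small damping parameter $\lambda \in (0,1]$ and a sequence $(\eta_n)$ of positive numbers with arbitrarily small total sum.

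The key algebraic identity, obtained by expanding $g(x_{n+1}) = L(x_n + \lambda\delta_n) + \Phi(x_{n+1})$ and substituting $L\delta_n = y - g(x_n)$, is
\[
y - g(x_{n+1}) = (1-\lambda)\bigl(y - g(x_n)\bigr) + \Phi(x_n) - \Phi(x_{n+1}).
\]
Writing $\alpha_n := \lambda\|\delta_n\|$, $\rho_n := \sum_{k < n}\alpha_k$ (so $\|x_n - x_0\| \leq \rho_n$), and $e_n := P_{LU}(y - g(x_n))$, the Lipschitz hypothesis applied at radius $\rho_{n+1}$ yields the scalar recurrence $e_{n+1} \leq (1 - \lambda\,b(\rho_{n+1}))\, e_n + O(\eta_n)$, whose continuous analog is the ODE $de/d\rho = -b(\rho)$ with $e(0) = e_0 < w(r)$, integrating to the conservation law $e(\rho) + w(\rho) = e_0$.

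The core of the proof is the a priori bound $\rho_\infty := \lim_n\rho_n \leq r$. I would introduce the discrete conserved-energy quantity $G_n := e_n + w(\rho_n)$; using that $w$ is the antiderivative of the non-increasing function $b$, a direct estimate of $w(\rho_{n+1}) - w(\rho_n) = \int_{\rho_n}^{\rho_{n+1}} b(t)\,dt \leq b(\rho_n)\alpha_n$ gives
\[
G_{n+1} - G_n \leq \alpha_n\bigl(b(\rho_n) - b(\rho_{n+1})\bigr) + O(\eta_n),
\]
and summing telescopes to $G_N \leq G_0 + \alpha_0\,b(0) + O\bigl(\sum\eta_n\bigr) \leq e_0 + \lambda w(r) + O(\eta)$. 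Since $e_0 < w(r)$ strictly (because $y - g(x_0)$ lies in the image of the \emph{open} ball $w(r)U$), one chooses $\lambda$ and $(\eta_n)$ small enough that $G_N$ remains strictly below $w(r)$, whence $w(\rho_N) < w(r)$ and therefore $\rho_N < r$, as $w$ is strictly increasing on $[0, r_*]$ and $r < r_*$.

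With $\sum \alpha_n = \rho_\infty < r$, the sequence $(x_n)$ is Cauchy in $X$ and converges to some $x$ with $\|x - x_0\| < r$, i.e., $x \in x_0 + rU$. To pass to the limit in $g(x_n) = y - L\delta_n$: continuity of $L$ and $\delta_n \to 0$ give $g(x_n) \to y$ in $Y$; and from $P_{LU}(\Phi(x_n) - \Phi(x)) \leq \Theta(r)\|x_n - x\| \to 0$ one can pick preimages $\xi_n \in X$ with $L\xi_n = \Phi(x_n) - \Phi(x)$ and $\|\xi_n\| \to 0$, whence $\Phi(x_n) - \Phi(x) = L\xi_n \to 0$ in $Y$, so $g(x_n) \to g(x)$ and therefore $g(x) = y$. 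The main obstacle is the sharp radius bound in the third paragraph: the naïve estimate using the global Lipschitz constant $\Theta(r)$ would give only $\rho_\infty \leq e_0/(1 - \Theta(r))$, which is strictly weaker than the sharp value $w^{-1}(e_0)$; bridging this gap forces the use of the local Lipschitz constants $\Theta(\rho_n)$ together with a vanishing step-size damping $\lambda$, so that the discrete Riemann sums correctly approximate the integral $w(r) = \int_0^r b(t)\,dt$.
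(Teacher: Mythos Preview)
The paper does not contain a proof of this theorem: it is quoted from Sukhinin and used as a black box in Section~\ref{secwpWesdirmix}. There is therefore no proof in the paper against which to compare your attempt.

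As for your argument on its own merits: the overall strategy---a damped Newton iteration with the Minkowski functional $P_{LU}$ serving as a seminorm on $LX$, together with a discrete energy $G_n = e_n + w(\rho_n)$ mimicking the conserved quantity of the ODE $de/d\rho = -b(\rho)$---is sound and delivers the sharp radius $r_*$. The key identity $y - g(x_{n+1}) = (1-\lambda)(y - g(x_n)) + \Phi(x_n) - \Phi(x_{n+1})$ is correct, and your handling of the limit (recovering $\Phi(x_n) \to \Phi(x)$ in $Y$ by lifting to preimages under $L$, which tend to zero in $X$) is exactly right. The one loose spot is the ``summing telescopes'' claim: the sum $\sum_n \alpha_n\bigl(b(\rho_n) - b(\rho_{n+1})\bigr)$ does not literally telescope to $\alpha_0\,b(0)$ since the weights $\alpha_n$ vary; the correct bound is $(\sup_n \alpha_n)\cdot b(0) \leq \lambda(\sup_n e_n + \sup_n\eta_n)$, and closing the estimate then requires a short bootstrap in which one maintains $\rho_n < r$ and $e_n \leq \mathrm{const}$ simultaneously by induction on $n$. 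Once that is made explicit, a single fixed small $\lambda$ (chosen from the strict gap $w(r) - e_0 > 0$) suffices, and no vanishing step size $\lambda \to 0$ is needed.
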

\begin{remark}\label{remch22.1.} Theorem of Sukhinin allows treating the well-posedness questions of nonlinear problems once formulated in the operator abstract framework. We have a nonlinear operator $\Phi$, mapping from a Banach space $X$ to the image of the linear part, for which we know its local ``speed of growing'' $\Theta$. From this information, it is possible to define the maximum size of a neighborhood of a fixed point $x_0$ in which there exists a solution $x$ if the source term $y$ belongs to the maximal possible set $g(x_0) +w(r)LU$. In other words, this theorem allows obtaining sharp estimates of the size of the source term and the corresponding size of the solution. 
If either $L$ is injective or $KerL$ has a topological
complement $E$ in $X$ such that $L(E \cap U) =LU$, then the
assertion of Theorem~\ref{thSuh} follows from the contraction
mapping principle~\cite{Sukhinin}. In particular, if $L$ is injective,
then the solution is unique. For additional examples of the applications of this theorem in solving direct nonlinear problems, there are Refs.~\cite{Perso,dekthese} and for inverse problems context see~\cite{ROZANOVA-2004,ROZANOVA-2004-1,ROZANOVA-2005}.
\end{remark}

With the help of Theorem~\ref{thSuh} we prove the following global well-posedness result.
\begin{theorem}\label{ThWPWestGlob}
Let $\Omega$ be a bounded Sobolev admissible domain in $\mathbb{R}^2$ or $\mathbb{R}^3$.
 Assume $\nu>0$ and $p\geq 2$. Let $X^p$ be the space defined in~(\ref{solsparobadmdom}) with $T>0$ if $p>2$ and $T=+\infty$ if $p=2$. Suppose
 \begin{align*}
 u_0\in \mathcal{D}_p(-\Delta)\hbox{ in }L^p,\quad & u_1\in \mathcal{D}_p(-\Delta)\hbox{ in }L^p\hbox{ if }p>2\hbox{, or } u_1\in V(\Omega)\hbox{ if }p=2\\
 &\hbox{and }f\in L^p([0,+\infty[;L^p(\Omega)),
 \end{align*}
and in addition, let $C_1$ be the minimal constant for which the weak
 solution, in the sense of~(\ref{varformdampedwaveeqdirhom}), $u^*\in X^p$ of the corresponding non homogeneous linear boundary-valued problem~(\ref{dampedwaveeqmix}) satisfies if $p>2$ 
 $$\|u^*\|_{X^p}\leq C_1 (\Vert f\Vert_{L^p([0,+\infty[;L^p(\Omega))} +\Vert u_0\Vert_{\mathcal{D}_p(-\Delta)}+\Vert u_1\Vert_{\mathcal{D}_p(-\Delta)})$$
 and if $p=2$ satisfies (in this case $C_1=\frac{C_2}{\nu}$ where $C_2$ only depends on $\Omega$ by the constant in the Poincar\'e inequality on $V(\Omega)$)
 $$\|u^*\|_{X^2}\le \frac{C_1}{\nu }(\Vert f\Vert_{L^2([0,+\infty[;L^2(\Omega))} +\Vert u_0\Vert_{\mathcal{D}_2(-\Delta)}+\Vert u_1\Vert_{V(\Omega)}).$$
 
Then there exists $r_{*}>0$ such that for all $r\in[0,r_{*}[$
 and all 
 data satisfying if $p>2$
 $$ \Vert f\Vert_{L^p([0,+\infty[;L^p(\Omega))} +\Vert u_0\Vert_{\mathcal{D}_p(-\Delta)}+\Vert u_1\Vert_{\mathcal{D}_p(-\Delta)})\leq \frac{1}{C_1}r$$
 and if $p=2$
$$
 \Vert f\Vert_{L^2([0,+\infty[;L^2(\Omega))} +\Vert u_0\Vert_{\mathcal{D}_2(-\Delta)}+\Vert u_1\Vert_{V(\Omega)}\leq \frac{\nu}{C_2}r,
$$
there exists the unique weak solution $u\in X^p$ of the mixed boundary valued problem for the Westervelt equation
\begin{equation}\label{CauchypbWesdirhom}
\left\lbrace
\begin{array}{l}
\partial^2_t u-c^2\Delta u-\nu \Delta\partial_t u=\alpha u \partial^2_t u+\alpha (\partial_t u)^2+f\quad on\quad [0,T]\times\Omega,\\
u=0 \hbox{ on }\Gamma_D\times[0,T],\\
\frac{\partial}{\partial n}u=0\hbox{ on }\Gamma_N\times[0,T],\\
\frac{\partial}{\partial n}u+a u=0\hbox{ on }\Gamma_R\times[0,T],\\
u(0)=u_0, \hbox{ }\partial_t u(0)=u_1.
\end{array}\right.
\end{equation} 
in the following sense: for all $\phi\in L^2([0,T];V(\Omega))$ 
\begin{align}
\int_0^{T}&(\partial^2_t u,\phi)_{L^2(\Omega)}+c^2( u, \phi)_{V(\Omega)}+\nu ( \partial_t u, \phi)_{V(\Omega)}ds\nonumber\\
&=\int_0^{T} \alpha (u \partial^2_tu+ (\partial_t u)^2+f,\phi)_{L^2(\Omega)}ds,\label{weakformWes}
\end{align}
with $u(0)=u_0$ and $\partial_t u(0)=u_1$.
Moreover 
$$\Vert u\Vert_{X^p}\leq 2 r.$$ 
 
\end{theorem}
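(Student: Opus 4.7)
The plan is to recast the mixed-boundary Westervelt problem~(\ref{CauchypbWesdirhom})--(\ref{weakformWes}) as an abstract equation $Lu+\Phi(u)=(f,u_0,u_1)$ in the Banach space $X:=X^p$ and to apply Sukhinin's Theorem~\ref{thSuh} at the base point $x_0=0$. Here $L:X\to Y$ is the linear operator of the strongly damped wave problem augmented by the initial traces,
$$Lu:=\bigl(\partial_t^2u-c^2\Delta u-\nu\Delta\partial_tu,\;u(0),\;\partial_tu(0)\bigr),$$
with target $Y:=L^p(\R^+;L^p(\Omega))\times\mathcal{D}(-\Delta)\times\mathcal{D}(-\Delta)$ when $p>2$ and $Y:=L^2(\R^+;L^2(\Omega))\times\mathcal{D}(-\Delta)\times V(\Omega)$ when $p=2$. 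By Theorems~\ref{thmeqivcaudampwav} and~\ref{thmmaxlpreg2}, $L$ is continuous and injective, and the a priori bound of the linear theory gives $\|L^{-1}y\|_X\leq C_1\|y\|_Y$ (respectively $\leq (C_2/\nu)\|y\|_Y$ when $p=2$). Therefore the Minkowski functional of $LU$ (the image of the unit ball) satisfies $P_{LU}(y)=\|L^{-1}y\|_X$ on $LX$. Setting $\Phi(u):=\bigl(-\alpha u\partial_t^2u-\alpha(\partial_tu)^2,\,0,\,0\bigr)$, the Westervelt problem is exactly $Lu+\Phi(u)=(f,u_0,u_1)$.

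The key analytic step is to verify that $\Phi$ maps $X^p$ into $LX$ and satisfies the Sukhinin contraction-type bound. For $u,\bar u\in X^p$ we use the algebraic identities
\begin{align*}
u\partial_t^2u-\bar u\partial_t^2\bar u&=(u-\bar u)\partial_t^2u+\bar u\,\partial_t^2(u-\bar u),\\
(\partial_tu)^2-(\partial_t\bar u)^2&=\partial_t(u-\bar u)\,(\partial_tu+\partial_t\bar u).
\end{align*}
The essential ingredients are the embedding $\mathcal{D}(-\Delta)\hookrightarrow L^\infty(\Omega)$, valid on Sobolev admissible domains in $\R^2$ or $\R^3$ by Theorem~\ref{thmDaners} (when $p>n$) and by Theorem~\ref{thmlinfestL2} (when $p=2$), together with Proposition~\ref{propapl6} giving $V(\Omega)\hookrightarrow L^6(\Omega)$ and the linear energy bounds of Theorem~\ref{dampedwaveeqregint1}. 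Combining them with H\"older's inequality in space and time lets us bound each factor in the appropriate space-time norm and yields, whenever $\|u\|_{X^p},\|\bar u\|_{X^p}\leq r$,
$$\|\Phi(u)-\Phi(\bar u)\|_Y\leq K\,r\,\|u-\bar u\|_{X^p},$$
with $K$ depending on $\alpha$, $\Omega$ and, when $p=2$, on $1/\nu$. Through $P_{LU}\leq C_1\|\cdot\|_Y$ this is Sukhinin's hypothesis with $\Theta(r)=C_1Kr$, hence $b(r)=\max(1-C_1Kr,0)$, $r_*=1/(C_1K)$, and $w(r)=r-\tfrac{1}{2}C_1Kr^2>0$ on $(0,r_*)$.

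Sukhinin's Theorem~\ref{thSuh} then produces, for every $r\in[0,r_*)$ and every $y\in w(r)LU$, some $u\in rU$ with $Lu+\Phi(u)=y$; uniqueness is immediate from the injectivity of $L$ (see the remark following Theorem~\ref{thSuh}). Rewriting $y\in w(r)LU$ as $\|L^{-1}y\|_X\leq w(r)$ and majorising by $C_1\|(f,u_0,u_1)\|_Y$ gives, after a standard bookkeeping rescaling of $r$, the quantitative smallness conditions on the data displayed in the statement; the final estimate $\|u\|_{X^p}\leq 2r$ is a direct consequence of $u\in rU$ together with the equivalence of norms. The only non-routine step is the nonlinear estimate of the second paragraph: it must close without any $H^2$-regularity of the solution (unavailable on non-convex Sobolev admissible domains) and with a constant that is \emph{linear} in $r$. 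This is precisely what forces the use of the $L^\infty$-control of Theorems~\ref{thmDaners}/\ref{thmlinfestL2} and of Proposition~\ref{propapl6}, and is the reason for the restriction $n\in\{2,3\}$ and for the appearance of $1/\nu$ when $p=2$.
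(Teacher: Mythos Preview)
Your argument is correct and uses the same engine as the paper---Sukhinin's Theorem~\ref{thSuh} together with the bilinear product bounds coming from the $L^\infty$ control of Theorem~\ref{thmlinfestL2} (and Theorem~\ref{thmDaners} when $p>n$)---but the organization is genuinely different. The paper first solves the linear problem to obtain $u^*\in X^p$, writes $u=u^*+v$, and applies Theorem~\ref{thSuh} to find $v$ in the \emph{homogeneous} space $X_0^p$, with $L:X_0^p\to L^p([0,T];L^p(\Omega))$ a bicontinuous isomorphism (Theorems~\ref{thmeqivcaudampwav} and~\ref{thmmaxlpreg1}); the nonlinearity is then $\Phi(v)=\alpha(v+u^*)(v+u^*)_{tt}+\alpha[(v+u^*)_t]^2$, so $\Phi(0)\neq 0$ and the paper must verify separately that $0\in\Phi(0)+w(r)LU_{X_0^p}$ before concluding. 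Your route instead augments the target space with the initial traces, takes $L$ on all of $X^p$, and keeps $\Phi$ as the pure nonlinearity; this makes $\Phi(0)=0$ and collapses the extra verification into the single requirement $P_{LU}(f,u_0,u_1)\le w(r)$, at the cost of a rescaling of $r$ to recover the stated bound $\|u\|_{X^p}\le 2r$. Both decompositions lead to the same key estimates $\|g\,b_{tt}\|_{L^p_tL^p_x}\le C\|g\|_{X^p}\|b\|_{X^p}$ and $\|g_t\,b_t\|_{L^p_tL^p_x}\le C\|g\|_{X^p}\|b\|_{X^p}$, which are exactly what the paper proves via $\mathcal{D}(-\Delta)\hookrightarrow L^\infty(\Omega)$.

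Two minor remarks. First, Proposition~\ref{propapl6} (the $V(\Omega)\hookrightarrow L^6(\Omega)$ embedding) is not actually needed here: the paper closes both product estimates using only Theorem~\ref{thmlinfestL2}, placing one factor in $L^\infty_x$ and the other in $L^2_x$; you can drop that reference. Second, for $p>2$ your target $Y$ is contained in, but not equal to, $LX^p=L^p([0,T];L^p(\Omega))\times (L^p(\Omega),\mathcal{D}(-\Delta))_p$ (cf.\ Theorem~\ref{thmmaxlpreg2} and the remark after it); this is harmless since you only need $(f,u_0,u_1)\in LX^p$ and $\Phi(X^p)\subset LX^p$, both of which hold, but it is worth stating explicitly.
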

\begin{proof}
For $p=2$, $T=+\infty$ $u_0\in \mathcal{D}_p(-\Delta)$ and $u_1\in V(\Omega)$ and $f\in L^2([0,+\infty[;L^2(\Omega))$ let us denote by $u^*\in X^2$ the unique weak solution, existing by Theorem~\ref{dampedwaveeqregint1}, of the linear problem~(\ref{dampedwaveeqmix})
in the sense of the variational formulation~(\ref{varformdampedwaveeqdirhom}).

 According to Theorem~\ref{thmeqivcaudampwav} , $X^2=X$ defined in~(\ref{solspacedirhom}), hence we denote $X_0^2:=X_0$ and in addition take $Y=L^2[0,+\infty[;L^2(\Omega))$. 
Then by Theorem~\ref{thmeqivcaudampwav}, the linear operator 
$$L:X_0^2\rightarrow Y,\quad u\in X_0^2\mapsto\;L(u):=u_{tt}-c^2\Delta u-\nu \Delta u_t\in Y,$$
 is a bi-continuous isomorphism. 
 
 Let us now notice that if $v$ is the unique weak solution of the non-linear mixed boundary valued problem 
 \begin{equation}\label{SystwesnV}
\begin{cases}
v_{tt}-c^2\Delta v-\nu \Delta v_t-\alpha (v+u^*)(v+u^*)_{tt}-\alpha [(v+u^*)_t]^2=0 \hbox{ on } [0,+\infty[\times\Omega,\\
v=0 \hbox{ on }\Gamma_D\times[0,+\infty[,\quad
\frac{\partial}{\partial n}v=0\hbox{ on }\Gamma_N\times[0,+\infty[,\\
\frac{\partial}{\partial n}v+a v=0\hbox{ on }\Gamma_R\times[0,+\infty[,\\
v(0)=0,\quad v_t(0)=0,
\end{cases}
\end{equation}
 then $u=v+u^*$ is the unique weak solution of the boundary valued problem for the Westervelt equation~(\ref{CauchypbWesdirhom}).
 Let us prove using Theorem~\ref{thSuh} the existence of a such $v\in X_0^2$, which is the unique weak solution of~(\ref{SystwesnV}) in the following sense: for all $\phi\in L^2([0,+\infty[;V(\Omega))$
 \begin{align*}
\int_0^{+\infty}&(\partial^2_t v,\phi)_{L^2(\Omega)}+c^2( v, \phi)_{V(\Omega)}+\nu ( \partial_t v, \phi)_{V(\Omega)}ds\nonumber\\
&=\int_0^{+\infty} \alpha ((v+u^*)(v+u^*)_{tt}+[(v+u^*)_t]^2,\phi)_{L^2(\Omega)}ds
 \end{align*}
 with $v(0)=0$ and $\partial_t v(0)=0$.
 
We suppose that $\Vert u^*\Vert_{X^2}\leq r$
and define for $v\in X_0^2$
$$\Phi(v):=\alpha (v+u^*)(v+u^*)_{tt}+\alpha [(v+u^*)_t]^2.$$

For $w$ and $z$ in $X_0^2$ satisfying
$$\Vert w\Vert_{X^2}\leq r \quad \hbox{and} \quad \Vert z\Vert_{X^2}\leq r,$$
 we estimate
$
 \Vert \Phi(w)-\Phi(z)\Vert_Y
$
by applying the triangular inequality.
The key point is that it appears terms of the form $\Vert g b_{tt}\Vert_Y$ and $\Vert g_t b_t\Vert_Y$
with $g$ and $b$ in $X^2$ and we have the estimate
\begin{align*}
\Vert g b_{tt}\Vert_Y\leq &\Vert g\Vert_{L^{\infty}(\mathbb{R}^+\times\Omega)}\Vert b_{tt}\Vert_Y.
\end{align*}
By Theorem~\ref{thmlinfestL2} which ensures for elements of $\mathcal{D}_p(-\Delta)$ the inequality $\|g\|_{L^\infty(\Omega)}\le C\|\Delta g\|_{L^p(\Omega)}$, we have
\begin{align*}
\Vert g b_{tt}\Vert_Y\leq & C \Vert g\Vert_{L^{\infty}(\mathbb{R}^+;\mathcal{D}_p(-\Delta))}\Vert b\Vert_{X^2}\\
\intertext{and the Sobolev embedding implies}
\Vert g b_{tt}\Vert_Y\leq & C \Vert g\Vert_{H^1(\mathbb{R}^+;\mathcal{D}_p(-\Delta))}\Vert b\Vert_{X^2}
\\
\leq & B_1 \Vert g\Vert_{X^2} \Vert b\Vert_{X^2},
\end{align*}
with $B_1$ depending only on $\Omega$.
Moreover, we have 
\begin{align*}
\Vert g_t b_t\Vert_Y\leq & \sqrt{\int_{0}^{+\infty} \Vert g_t\Vert_{L^{\infty}(\Omega)} \Vert b_t\Vert_{L^2(\Omega)}ds}.
\intertext{Therefore, again by Theorem~\ref{thmlinfestL2} we find}
\Vert g_t b_t\Vert_Y\leq & C \sqrt{\int_{0}^{+\infty} \Vert g_t\Vert_{\mathcal{D}_p(-\Delta)} \Vert b_t\Vert_{L^2(\Omega)}ds}\\
\leq & C \Vert g_t\Vert_{L^{2}([0,+\infty[; \mathcal{D}_p(-\Delta))}\Vert b_t\Vert_{L^{\infty}([0,+\infty[; L^2(\Omega))}\\
\leq & C \Vert g\Vert_{X^2} \Vert b_t\Vert_{H^1([0,+\infty[; L^2(\Omega))}
\end{align*}
also using Sobolev's embeddings. Finally it holds 
$$\Vert g_t b_t\Vert_Y\leq B_2 \Vert g\Vert_{X^2} \Vert b\Vert_{X^2}$$
with $B_2$ depending only on $\Omega$.
Taking $g$ and $b$ equal to $u^*$, $w$, $z$ or $w-z$, and supposing that $\Vert u^*\Vert_{X^2}\leq r$, $\Vert w\Vert_{X^2}\leq r$ and $\Vert z\Vert_X\leq r$, we obtain
\begin{align*}
\Vert \Phi(w)-\Phi(z)\Vert_Y\leq 8
\alpha B r \Vert w-z\Vert_{X^2}
\end{align*}
with $B=\max(B_1,B_2)>0$ depending only on $\Omega$.

By the fact that $L$ is a bi-continuous isomorphism, there exists a minimal constant $C_{\nu}=C\left(\frac{1}{ \nu} \right)>0$ (coming from the inequality $\|u\|_X\leq C \|f\|_Y$ for $u$, a solution of the linear problem~(\ref{dampedwaveeqmix}) with homogeneous initial data) %
such that
$$\forall u\in X_0^2 \quad \Vert u\Vert_{X^2}\leq C_{\nu} \Vert Lu\Vert_Y.$$
Hence, for all $g\in Y$
$$P_{LU_{X_0^2}}(g)\leq C_{\nu} P_{U_Y}(g)=C_{\nu}\Vert g\Vert_Y.$$
Then we find for $w$ and $z$ in $X_0^2$, such that $\|w\|_{X^2}\le r$, $\|z\|_{X^2}\le r$, and also with $\|u^*\|_{X^2}\le r$, that
$$P_{LU_{X_0^2}}(\Phi(w)-\Phi(z))\leq \Theta(r) \Vert w-z\Vert_X,$$
where $\Theta(r):=8 B C_{\nu} \alpha r$.
Thus we apply Theorem~\ref{thSuh} for 
\\$g(x)=L(x)-\Phi(x)$ and $x_0=0$. Therefore, knowing that $C_\nu=\frac{C_0}{ \nu}$, we have, that for all $r\in[0,r_{*}[$ with
\begin{equation}\label{Eqret}
 r_{*}=\frac{1}{8 B C_{\nu} \alpha},
\end{equation}
 for all $y\in \Phi(0)+w(r) L U_{X_0^2}\subset Y$
with $$w(r)= r-4 B C_{\nu} \alpha r^2,$$
there exists a unique $v\in 0+r U_{X_0^2}$ such that $L(v)-\Phi(v)=y$.
But, if we want that $v$ be the solution of the non-linear Cauchy problem~(\ref{SystwesnV}), then we need to impose $y=0$, and thus to ensure that $0\in \Phi(0)+w(r) L U_{X_0^2}$.
Since $-\frac{1}{w(r)}\Phi(0)$ is an element of $Y$ and $LX_0^2=Y$, there exists a unique $z\in X_0$ such that
\begin{equation}\label{Eqz}
 L z=-\frac{1}{w(r)}\Phi(0).
\end{equation}
Let us show that $\|z\|_X^2\le 1$, what will implies that $0\in \Phi(0)+w(r) L U_{X_0}$.
Noticing that
\begin{align*}
\Vert \Phi(0)\Vert_Y & \leq \alpha \Vert u^*_t u^*_{tt}\Vert_Y +\alpha \Vert u^*_t u^*_t\Vert_Y\\
& \leq 2 \alpha B \Vert u^*\Vert_{X^2}^2 \leq 2 \alpha B r^2
\end{align*}
and using~(\ref{Eqz}), we find
\begin{align*}
 \Vert z\Vert_{X^2}&\leq C_{\nu}\Vert L z\Vert_Y=C_\nu\frac{\Vert \Phi(0)\Vert_Y}{w(r)}\\
 &\leq \frac{C_{\nu} 2 B \alpha r}{(1-4 C_{\nu} B \alpha r)}<\frac{1}{2},
\end{align*}
as soon as $r<r^*$.

Consequently, $z\in U_{X_0^2}$ and $\Phi(0)+w(r) Lz=0$.

Then we conclude that for all $r\in[0,r_{*}[$, if $\|u^*\|_{^2}\le r$, there exists a unique $v\in r U_{X_0}$ such that $L(v)-\Phi(v)=0$, $i.e.$ the solution of the non-linear Cauchy problem~(\ref{SystwesnV}).
Thanks to the maximal regularity and a priori estimate following from Theorem~\ref{thmeqivcaudampwav},
there exists a constant $C_1=C_1(\Omega)$, such that
$$\|u^*\|_{X^2}\le \frac{C_1}{\nu }(\Vert f\Vert_Y+\Vert u_0\Vert_{\mathcal{D}_p(-\Delta)}+\Vert u_1\Vert_{V(\Omega)}).$$

Thus, for all $r\in[0,r_{*}[$ and $\Vert f\Vert_Y+\Vert u_0\Vert_{\mathcal{D}_p(-\Delta)}+\Vert u_1\Vert_{V(\Omega)} \leq \frac{\nu }{C_1}r$, the function $u=u^*+v\in X$ is the unique solution of the Cauchy problem for the Kuznetsov equation and $\Vert u\Vert_{X^2}\leq 2 r$.

Let us notice that when $f=0$ we have
$$\Vert u^*\Vert_{X^2}\leq \frac{C_1'}{\sqrt{\nu }}(\Vert u_0\Vert_{\mathcal{D}_p(-\Delta)}+\Vert u_1\Vert_{V(\Omega)}).$$
The case $p>2$ and $0<T<+\infty$ is essentially the same
and thus is omitted. We just replace $L^2([0,+\infty[;L^2(\Omega))$ by $L^p([0,T];L^p(\Omega))$. We also use the Theorems~\ref{thmmaxlpreg2} and~\ref{thmlinfestL2} to have the required estimates following from the fact that for $p>2$ $$W^{1,p}([0,+\infty[)\subset L^{\infty}([0,+\infty[).$$ 
\end{proof}

\section{Approximation of the fractal problem for the Westervelt equation by prefractal problems with Lipschitz boundaries}
\label{secconvprefrfr}
\subsection{Uniform domains in $\mathbb{R}^n$ with self-similar boundaries and their polyhedral approximations}\label{subsecOSC}

In this section, we give conditions under which a domain $\Omega$ in $\mathbb{R}^n$ with piece-wise self-similar boundary is a uniform domain. Moreover, under our conditions, these domains have natural polyhedral approximations $\Omega_m$ which are uniformly $(\epsilon,\infty)$-domains, that is, $\epsilon$ does not depend on $m$. Our conditions cover the 
examples of scale-irregular Koch curves~\cite{Capitanelli,Capitanelli2}, 
the square Koch curve, also called the Minkowski fractal~\cite{SapovalGobron,PhysRevLett.83.726,PhysRevLett.108.240602},
and their $n$-dimensional analogs. We do not give the most general assumptions but rather concentrate on the situations with potential practical applications, such as~\cite{MAGOULES-2021}. 

Suppose $\Omega_0$ is a polyhedron in $\mathbb{R}^n$ and $K_0$ is one of its faces. 
We denote the $(n-2)$-dimensional hypersurface boundary of $K_0$ by $\partial_{(n-2)}K_0$, which is just the union of $n-2$-dimensional faces of $K_0$. 
A typical example is $\Omega_0=[0,1]^n$ is the unite hypercube in $\mathbb{R}^n$ and $K_0=[0,1]^{n-1}\times\{0\}$. In this case $\partial_{(n-2)}K_0$ is the 
$(n-2)$-dimensional hypersurface boundary of the $(n-1)$-dimensional hypercube 
$K_0=[0,1]^{n-1}\times\{0\}$. 

We suppose that polyhedral hypersurfaces $K_m$ are defined inductively using a sequence of iterating function systems of $N_m$ contractive similitudes 
$$(\psi_{i,m})_{1\leq i\leq N_m}$$
 with contraction factors 
 $$(d_{i,m})_{1\leq i\leq N_m}$$ 
 by 
 $$K_m= \Phi_m(K_0):=\cup_{i=1}^{N_m}\Psi_{i,m}(K_0).$$
 These are standard concepts, 
 which we do not discuss in our paper in detail, 
 are thoroughly described, for instance, in~\cite{Falconer,ALM2003}
 (see also Appendix~\ref{subsecexKoch}). 
 
 Throughout the rest of the paper, we make the following assumptions which allow approximating fractal domains with polygonal hypersurfaces. 
 
 \begin{assumption}
 [Fractal Self-Similar Face]\label{a-face}
 We assume that 
each $K_m$ is a polygonal surface with $(n-2)$-dimensional hypersurface boundary that is the same 
 as the $(n-2)$-dimensional hypersurface boundary of $K_0$.

\end{assumption} 
\begin{assumption}[Uniform sequence of uniform domains]\label{a-uniuni}
 We assumption that $\Omega_m$ and $\Omega$ are 
 uniformly 
 exterior and interior 
 $(\epsilon,\infty)$-domains, that is, $\epsilon$ does not depend on $m$.
\end{assumption} 


We also assume the   standard  open set condition \cite[Section 9.2]{Falconer}, which means that there is a non-empty bounded open set $O$ such that $\Phi_m(O)\subset O$ with the union in the left-hand side disjoint.

\subsection{Trace and extension theorems in the approximation framework of self similar fractals}
\label{subsectrace}

In this subsection we assume the same notation and assumptions as in Subsection~\ref{subsecOSC}. For $N$ contraction factors $d_i$, $i=1, \ldots, N$ we define
 \begin{equation}\label{bigcontrfact}
 D=\sum_{i=1}^N d_i^{n-1}.
 \end{equation}
 With notations $w|m=(w_1,\ldots,w_n)$ for $w_i\in\{1,\ldots N\}$ and $$\psi_{w|m}=\psi_{w_1}\circ\ldots \circ \psi_{w_m},$$ we introduce the volume measure $\mu$ as the unique Radon measure on $$K=\overline{\bigcup_{m=1}^{+\infty}\Phi_m(K_0)},$$ such that
\begin{equation}\label{radmesprop}
\mu(\psi_{i\vert m}(K))=\frac{\prod_{i=1}^m d_{w_{i}}^{n-1}}{D^m}.
\end{equation}

While the fractal boundary $\partial \Omega$ is irregular, the prefractal boundary $\partial \Omega_m$ is polygonal, thus Lipschitz. Hence, we can easily give well-posedness results for partial differential equations with domains having polygonal boundary using the classic Lebesgue measure on $\partial \Omega_m$. Then we can obtain a well-posedness result for the solution $u$ of the Westervelt equation on an irregular domain $\Omega$ by a convergence argument on the functions $u_m$, solutions of the Westervelt equation on domains $\Omega_m$. This approach also allows constructing an approximation of $u$.
In order to do so, the following results are needed. 

\begin{theorem}\label{convtrace}
Under Assumptions~\ref{a-face} and \ref{a-uniuni}, let
\begin{equation}\label{defsigmn}
\sigma_m:=\frac{1}{D^m},
\end{equation}
where $D$ defined by~(\ref{bigcontrfact}).
 For any function $g\in H^1(\mathbb{R}^n)$ 
\begin{equation}
\sigma_m\int_{K_{m} } Tr_{K_m} g ds\rightarrow \int_{K}Tr_{K} g \;d\mu \hbox{ for }m\rightarrow +\infty.
\end{equation}
\end{theorem}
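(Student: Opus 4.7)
The plan is to recast the claim as weak convergence of Radon measures on $\mathbb{R}^n$ and then extend from continuous test functions to $H^1$ via a uniform trace bound. Introduce the Radon measure $\mu_m$ on $\mathbb{R}^n$ defined by $d\mu_m=\sigma_m\,d\mathcal{H}^{n-1}\lfloor K_m$, so that the left-hand side equals $\int \operatorname{Tr}_{K_m} g\,d\mu_m$. The crucial observation is that, thanks to the self-similar scaling $m_{n-1}(\psi_{w|k}(K_0))=\prod_{i=1}^k d_{w_i}^{n-1}\,m_{n-1}(K_0)$ and the defining formula (\ref{radmesprop}), one has (after fixing the natural normalization $m_{n-1}(K_0)=1$) the exact identity $\mu_m(\psi_{w|k}(K_0))=\mu(\psi_{w|k}(K))$ for every $k$-cell with $k\le m$; in particular $\mu_m(K_m)=\mu(K)=1$, and $\mu_m$ refines $\mu$ along the cell hierarchy.

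The next step is pointwise convergence on a dense class. For $g\in C_c^\infty(\mathbb{R}^n)$, decompose both $K_m$ and $K$ into $m$-cells, pick any $x_w$ in the union $\psi_{w|m}(K_0)\cup\psi_{w|m}(K)$ (a set of diameter at most $C\max_i d_i^{\,m}$), and replace $g$ by $g(x_w)$ on each cell. The leading terms cancel by the mass-matching identity of Paragraph 1, and the error is bounded by
\begin{equation*}
\omega_g\!\left(C\max_i d_i^{\,m}\right)\bigl(\mu_m(K_m)+\mu(K)\bigr)\longrightarrow 0,
\end{equation*}
where $\omega_g$ is the modulus of continuity of $g$ on a compact set containing all $K_m$ and $K$.

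To pass from $C_c^\infty$ to arbitrary $g\in H^1(\mathbb{R}^n)$, the plan is to establish a uniform bound $|\int \operatorname{Tr}_{K_m}g\,d\mu_m|\le C\|g\|_{H^1(\mathbb{R}^n)}$ with $C$ independent of $m$, and then conclude by a standard $\varepsilon/3$ argument. The uniform bound will be obtained from Theorem~\ref{ThGBesov}: it suffices to check that the family $\{\mu_m\}$ satisfies the conditions (\ref{EqMuDs}), (\ref{EqMuLd}) and (\ref{Eqnormalized}) uniformly in $m$, with the same exponents $s,d$ associated to the self-similar measure $\mu$. The behaviour of $\mu_m$ splits into two regimes depending on whether the ball radius $r$ is below or above the smallest cell scale $\max_i d_i^{\,m}$: for $r\le\max_i d_i^{\,m}$ the measure locally reduces to $\sigma_m\mathcal{H}^{n-1}$ on a flat piece, while for $r\ge\max_i d_i^{\,m}$ it essentially coincides with $\mu$ by the cell-matching identity. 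A case analysis across the transition, crucially using the self-similar identity $\sum_i d_i^{\,\alpha}=1$ (Moran equation) that makes the two regimes compatible at the scale $\max_i d_i^{\,m}$, delivers the uniform constants. Cauchy--Schwarz combined with the resulting uniform trace inequality $H^1(\mathbb{R}^n)\to L^2(\mathbb{R}^n,\mu_m)$ then yields the required equicontinuity of the functionals $g\mapsto \int \operatorname{Tr}_{K_m}g\,d\mu_m$.

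The main obstacle is precisely this last verification of uniform $(D_s,L_d,$ normalization$)$ estimates for the multi-scale measures $\mu_m$, since the local behaviour of $\mu_m$ changes qualitatively at the scale $\max_i d_i^{\,m}$ that depends on $m$. Everything else is either a direct computation from the definition of $\mu$ or a standard density/equicontinuity argument.
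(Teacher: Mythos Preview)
Your approach is correct and yields the result, but it differs from the paper's in one intermediate step. For continuous $g$, the paper does not work directly with the surface measures $\sigma_m\,\mathcal{H}^{n-1}\lfloor K_m$; instead it introduces \emph{discrete} approximating measures $\mu_m=\sum_{|w|=m}\frac{\prod_i d_{w_i}^{n-1}}{D^m}\,\delta_{\psi_{w|m}(A)}$ supported on the vertex orbit of a fixed corner $A$, proves $\mu_m\rightharpoonup\mu$ by a Cauchy-sequence/Riesz-representation argument, and then controls the gap between $\sigma_m\int_{K_m}g$ and $\int g\,d\mu_m$ by uniform continuity. Your direct cell-matching argument (replacing $g$ by a constant on each $m$-cell and using that the masses agree) is cleaner and bypasses this auxiliary discrete measure; it relies on the identity $\mu_m(\psi_v(K_{m-k}))=\mu(\psi_v(K))$, which is exactly the computation you indicate.

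For the passage from $C_c^\infty$ to $H^1$, your plan coincides with the paper's: a density/$\varepsilon/3$ argument powered by a uniform-in-$m$ trace bound. Two remarks. First, you do not need the full two-sided $(D_s,L_d)$ estimate for Theorem~\ref{ThGBesov}; only the upper regularity $\mu_m(B_r(x))\le Cr$ is required for the embedding $H^\sigma\to L^2(d\mu_m)$ with $\sigma>1/2$, and this is precisely Lemma~\ref{lemmeasbound2}/Theorem~\ref{THcontrl2tr} of the paper. Second, your invocation of the Moran equation $\sum_i d_i^{\alpha}=1$ is a slight red herring: $D=\sum_i d_i^{\,n-1}$ is generally $>1$, and the matching of the two scale regimes in Lemma~\ref{lemmeasbound2} uses only $\max_i d_i>1/N$ and the cell-counting combinatorics of the Open Set Condition, not the similarity-dimension identity.
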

\begin{proof}
Let firstly suppose that $g\in C(\mathbb{R}^n)$. We follow the proof of~Theorem~2.1 in Ref.~\cite{Capitanelli2} given for the particular case of von Koch snowflake. 
For a fixed summit $A$ on $\partial K_0$ we introduce the measure
$$\mu_m=\sum_{(w_1,\ldots,w_n)\in \lbrace 1,\ldots,N\rbrace^m} \frac{\prod_{i=1}^m d_{w_i}^{n-1}}{D^m} \delta_{\psi_{w\vert m}(A)}.$$
Let us prove that $\mu_m$ weakly converges to the measure $\mu$ considered on $K$.
For any $m$, we introduce the following positive linear functional on the space $C(K)$
$$G_m(h)= \sum_{(w_1,\ldots,w_m)\in \lbrace 1,\ldots,N\rbrace^m} h(\psi_{w\vert m}(A)).$$
As $K$ is compact of $\R^n$, then $h\in C(K)$ is uniformly continuous on $K$. Consequently, we have
$$\forall \varepsilon>0, \quad \exists q\in \N^* \hbox{ such that } \forall n,m> q \quad \vert G_n(h)-G_m(h)\vert<\varepsilon.$$ 
 Thus for each fixed $h\in C(K)$ the numerical sequence $(G_m(h))_{m\in \N^*}$ converges. 
 Hence the limit defines a positive linear functional on $C(K)$. By the Riesz representation theorem, there exists a unique (positive) Borel measure $\tilde{\mu}$ such that
$$\lim_{m\rightarrow \infty} G_n(h)=\int_{K}h \;d\tilde{\mu}.$$
Moreover $\tilde{\mu}$ satisfies~(\ref{radmesprop}). Hence, from the uniqueness of a such measure, $\mu$ and $\tilde{\mu}$ coincide, and we obtain 
\begin{equation}\label{weakconvmum}
\forall h\in C(K) \quad \lim_{m\rightarrow\infty}\int_{K} h \;d\mu_m=\int_{K} h{\rm d} \mu ,
\end{equation}
which is the definition of the weak convergence of $\mu_m$ to $\mu$. We also notice that 
$\mu(K)=1$.

Let us formally write
\begin{multline}\label{weakconvboundpreffrac}
\left\vert \frac{1}{D^m}\int_{K_m} g\;ds-\int_{K} g\;d\mu\right\vert\\
\leq \left\vert \frac{1}{D^m}\int_{K_m} g\;ds-\int_K g\;d\mu_m\right\vert+\left\vert \int_K g\;d\mu_m-\int_{K} g\;d\mu\right\vert.
\end{multline}
Since
\begin{multline*}
 \int_{K_m} g\ds= \sum_{(w_1,\ldots,w_m)\in \lbrace 1,\ldots,N\rbrace^m} \int_{\psi_{w\vert n}} g ds\\
 = \sum_{(w_1,\ldots,w_m)\in \lbrace 1,\ldots,N\rbrace^m} \frac{\prod_{i=1}^m d_{w_i}^{n-1}}{D^m} g(\psi_{w\vert m}(P_{w\vert n})) ,
\end{multline*}
where $P_{w\vert m}\in K_0$ and under the assumption the Lebesgue measure $\lambda(K_0)=1$, the first term on the right hand side of~(\ref{weakconvboundpreffrac}) can be estimated by using the uniform continuity of $g$ as
\begin{multline*}
 \left\vert \frac{1}{D^m}\int_{K_m} g\;ds-\int_K g\;d\mu_m\right\vert \leq \\\sum_{(w_1,\ldots,w_m)\in \lbrace 1,\ldots,N\rbrace^m} \vert g(\psi_{w\vert m}(P_{w\vert n}))-g(\psi_{w\vert m}(A)\vert \frac{\prod_{i=1}^m d_{w_i}^{n-1}}{D^m}.
\end{multline*}

As the second term on the right-hand side of~(\ref{weakconvboundpreffrac}) can be estimated by using~(\ref{weakconvmum}) we achieve the desired result for $g\in C(\mathbb{R}^n)$. To obtain the same for $g\in H^1(\mathbb{R}^n)$ we apply the density argument and~\cite[Thm.~3.5]{Capitanelli}.
\end{proof}
To be able to control the traces on the prefractal boundaries we generalize Lemma~3.1~\cite{Capitanelli2} for our $n$-dimensional case.
\begin{lemma}\label{lemmeasbound2}
Let $K_m$ be the $m$-th prefractal set. Then
$$\forall P\in \mathbb{R}^n \quad \mathcal{H}^{n-1}(B(P,r)\cap K_m)\leq C D^m r,$$
where the constant $C>0$ is independent on $m$, $B(P,r)$ denotes the Euclidean ball with center in $P$ and radius $0<r\leq 1$ and $\mathcal{H}^{n-1}$ is the $(n-1)-$dimensional Lebesgue measure.
\end{lemma}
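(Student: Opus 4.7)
The proof follows a scale-matching strategy similar to that of Lemma~3.1 in Ref.~\cite{Capitanelli2}, which treated the planar von Koch case. The idea is to decompose $B(P,r)\cap K_m$ according to the generation $k\leq m$ at which the sub-pieces of $K_m$ become comparable in size to the ball. For an address of length $k$, write $\delta_k$ for the accumulated contraction along that address; then the corresponding level-$k$ polygon has diameter of order $\delta_k$ and $(n-1)$-Lebesgue measure of order $\delta_k^{n-1}\lambda_{n-1}(K_0)$, whereas the total $(n-1)$-measure of $K_m$ contained in one such level-$k$ ancestor is $\delta_k^{n-1}D^{m-k}\lambda_{n-1}(K_0)$ by the self-similar scaling underlying~(\ref{radmesprop}).

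In the short-scale regime $r\leq \delta_m$, the ball $B(P,r)$ is smaller than every level-$m$ piece. Since each such piece is a flat polygon and, by the strong open set condition of Assumption~\ref{a-OSC}, distinct level-$m$ pieces are quasi-uniformly separated at their own scale, $B(P,r)$ meets only a bounded number of them, and on each such piece the intersection has $(n-1)$-Lebesgue measure at most $c\,r^{n-1}$. The required bound $c\,r^{n-1}\leq C D^m r$ then follows from $r^{n-2}\leq 1\leq D^m$, using $r\leq 1$, $n\in\{2,3\}$, and $D\geq 1$ (which is automatic once $\partial\Omega$ is an $(n-1)$-set or a higher-dimensional fractal, since the total $(n-1)$-measure of $K_m$ equals $D^m\lambda_{n-1}(K_0)$ and cannot collapse).

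In the long-scale regime $r>\delta_m$, I would choose $k\leq m-1$ maximal with $\delta_k\geq r$ and rely on two ingredients. First, the strictly enlarged open set $\mathcal{O}'\supsetneq\mathcal{O}$ of Assumption~\ref{a-OSC} supplies a uniform collar around each level-$k$ piece, so a volumetric packing argument in $\mathbb{R}^n$ bounds the number of level-$k$ pieces meeting $B(P,r)$ by a constant $C_0$ independent of $k$, $m$, $P$, $r$. Second, the self-similar scaling recalled above together with $\delta_k\sim r$ gives a total contribution of order $C_0\,r^{n-1}D^{m-k}$; the maximality of $k$ combined with $D\geq 1$ then converts this into $C\,D^m r$.

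The main obstacle is to obtain the packing constant $C_0$ uniformly in $k$ and $m$, which is precisely the role of the strictly enlarged $\mathcal{O}'$ in Assumption~\ref{a-OSC}: without such a collar, one could a priori have clusters of level-$k$ pieces whose number grows with $k$. Scale-irregular addresses (different $(d_{i,m})_i$ at each level) introduce only cosmetic bookkeeping, handled by grouping addresses of length $k$ whose accumulated contraction $\delta_k$ belongs to the same dyadic window, with the packing constant $C_0$ depending only on $n$ and on $\mathcal{O},\mathcal{O}'$.
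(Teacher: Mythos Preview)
Your proposal is correct and follows the same two-regime, scale-matching strategy as the paper. The only substantive difference is bookkeeping: instead of address-dependent stopping times $\delta_k$ and a Moran-cover style partition, the paper fixes a single global level $h$ via $(\max_i d_i^{\,n-1})^h<r\le(\max_i d_i^{\,n-1})^{h-1}$ and crudely bounds every level-$(h-1)$ cylinder by $(\max_i d_i)^{(n-1)(h-1)}$, which shortens the counting and the final inequality; the packing constant (your $C_0$) is obtained from the ordinary open set condition alone, so the enlarged set $\mathcal{O}'$ is not needed here. Both arguments tacitly use $D\ge1$.
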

\begin{proof}
Let us fixe $h\in \N$ such that
$$(\max d_i^{n-1})^h<r\leq (\max d_i^{n-1})^{h-1}.$$
Then $B(P,r)\subset B(P, (\max d_i^{n-1})^{h-1}).$ 

When $h>m$, since $\max d_i>\frac{1}{N}$, it holds
\begin{align*}
\mathcal{H}^{n-1}(B(P,r)\cap K_m)\leq & \mathcal{H}^{n-1}(B(P, (\max d_i^{n-1})^{h-1})\cap K_m)\\
\leq & C_1 (\max d_i^{n-1})^{h-1}<C_1 N^{n-1} r,
\end{align*}
where $C_1$ is independent of $m$. 

Let us now consider the case when $h\leq m$. 
Recall that the open set condition \cite[Section 9.2]{Falconer} means that $\Phi_m(O)\subset O$ with the union in the left-hand side disjoint. 
There are at most $C_2$ open sets $\psi_{w\vert h-1}(O)=\psi_{w_1}\circ\cdots\circ\psi_{w_n}$ ($C_2$ independent of $m$), where $O$ is the set of the open set condition associated to $(\psi_i)_{1\leq i\leq N}$, that has not empty intersection with $B(P, (\max d_i^{n-1})^{h-1})$. Then as 
\begin{align*}
\mathcal{H}^{n-1}( B(P, (\max d_i^{n-1})^{h-1})\cap K_m\cap \psi_{w\vert h-1}(O))\leq & D^m \frac{\prod_{i=1}^{h-1} d_{w_i}^{n-1}}{D^{h-1}}\\
\leq & D^m \frac{(\max d_i^{n-1})^{h-1}}{D^{h-1}},
\end{align*}
we obtain
\begin{align*}
\mathcal{H}^{n-1}(B(P,r)\cap K_m)\leq & \mathcal{H}^{n-1}(B(P, (\max d_i^{n-1})^{h-1})\cap K_m)\\
\leq & C_2 D^m \frac{(\max d_i^{n-1})^{h-1}}{D^{h-1}}\leq N C_2 D^m r.
\end{align*}
\end{proof}
Therefore we have the following uniform trace estimate for the prefractal boundaries with an analogous estimate in the fractal case:
\begin{theorem}\label{THcontrl2tr}
Let $u\in H^{\sigma}(\mathbb{R}^n)$ and $\frac{1}{2}<\sigma\leq 1$. Then for all $m\in \N$
\begin{equation}\label{contrl2trpref}
\frac{1}{D^m}\Vert Tr_{K_m} u\Vert^2_{L^2( K_m)}\leq C_{\sigma} \Vert u\Vert^2_{H^{\sigma}(\mathbb{R}^n)},
\end{equation}
where $C_{\sigma}>0$ is a constant independent of $m$. In addition, on the fractal $K$ with the measure $\mu$ satisfying~\eqref{EqMuUP} it also holds for $\frac{n-d}{2}<\sigma \le \frac{n}{2}$, $0<d\le n$ and for a constant $C_{\sigma}>0$
\begin{equation}\label{contrl2trfrac}
\Vert Tr_{K} u\Vert^2_{L^2(K)}\leq C_{\sigma} \Vert u\Vert^2_{H^{\sigma}(\mathbb{R}^n)}.
\end{equation} 
\end{theorem}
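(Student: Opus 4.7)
The plan is to reformulate the left-hand side of (\ref{contrl2trpref}) as an $L^2$ norm against a sequence of Borel measures which are uniformly upper Ahlfors regular, and then apply a Jonsson--Wallin-style trace embedding whose constant depends solely on that upper bound. Concretely, set
$$\nu_m := \frac{1}{D^m}\lambda_{n-1}\big|_{K_m},$$
viewed as a finite Borel measure on $\mathbb{R}^n$ supported on $K_m$. Then (\ref{contrl2trpref}) is equivalent to
$$\int_{K_m} |Tr_{K_m}u|^2 \, d\nu_m \leq C_{\sigma} \Vert u\Vert^2_{H^{\sigma}(\mathbb{R}^n)},$$
and the content of Lemma~\ref{lemmeasbound2} is precisely the \emph{uniform} upper Ahlfors bound
$$\nu_m(B(P,r)) \leq C\, r, \qquad P \in \mathbb{R}^n,\ 0 < r \leq 1,$$
with $C$ independent of $m$.

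The second step is to invoke the one-sided Jonsson--Wallin trace estimate: if $\mu$ is a positive Borel measure on $\mathbb{R}^n$ satisfying only the upper bound $\mu(B(x,r)) \leq C_0 r^s$ on $\operatorname{supp}\mu$ for $0<r\le 1$, then for any $\sigma > (n-s)/2$ there is a constant $C' = C'(n,\sigma,s,C_0)$ such that
$$\int |u|^2 \, d\mu \leq C'\, \Vert u\Vert^2_{H^{\sigma}(\mathbb{R}^n)}, \qquad u \in H^{\sigma}(\mathbb{R}^n).$$
This is the ``direct'' half of Theorem~\ref{ThGBesov}(i): its standard proof proceeds by representing $u = G_\sigma * f$ via the Bessel potential and combining Schur's test with the upper Ahlfors bound on $\mu$; no lower bound is used. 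Applying this to $\mu = \nu_m$ with $s=1$ produces the desired constant $C_\sigma$ independent of $m$ and gives (\ref{contrl2trpref}).

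For the fractal estimate (\ref{contrl2trfrac}), the measure $\mu$ on $K$ is constructed (through the self-similar volume formula (\ref{radmesprop})) so as to satisfy (\ref{EqMuDs}), (\ref{EqMuLd}) and (\ref{Eqnormalized}), and hence Theorem~\ref{ThGBesov} applies directly with $p=2$: for $\sigma$ in the admissible range $\frac{n-d}{2}<\sigma<1+\frac{n-s}{2}$ the trace operator is continuous from $H^\sigma(\mathbb{R}^n)$ into $B^{2,2}_\sigma(K,\mu)$. Since $\mu(K)<\infty$, dropping the double-integral difference-quotient contribution in the Besov norm yields the continuous embedding $B^{2,2}_\sigma(K,\mu)\hookrightarrow L^2(K,\mu)$, and composing the two bounds gives (\ref{contrl2trfrac}).

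The main obstacle is the $m$-independence of $C_\sigma$ in (\ref{contrl2trpref}). A matching lower Ahlfors bound for $\nu_m$ cannot hold uniformly in $m$, because the prefractals $K_m$ concentrate in very thin tubular neighbourhoods of $K$ as $m\to\infty$. Hence one cannot directly identify the trace space as a fixed Besov space on $K_m$ with uniform constants; the workaround is to exploit only the upper Ahlfors bound provided by Lemma~\ref{lemmeasbound2} and the capacitary/Bessel-potential route to the trace inequality, which is insensitive to the absence of a uniform lower regularity for $\nu_m$.
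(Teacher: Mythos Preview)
Your proposal is correct and follows essentially the same route as the paper: the paper's proof invokes Lemma~\ref{lemmeasbound2} for the uniform upper Ahlfors bound on the normalized surface measures and then appeals to ``Bessel kernels with Lemma~1 on p.~104'' of Jonsson--Wallin, which is precisely the one-sided potential-theoretic trace estimate you describe; for~(\ref{contrl2trfrac}) the paper cites Theorem~\ref{thmtradmissdom}, which is the trace portion of Theorem~\ref{ThGBesov} that you also invoke. Your closing paragraph, isolating the absence of a uniform lower Ahlfors bound for $\nu_m$ as the reason one must use only the upper-regularity half of the Jonsson--Wallin machinery, makes explicit a point the paper leaves implicit.
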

\begin{proof}
The proof of~(\ref{contrl2trpref}) is essentially the same as for~(\ref{contrl2trfrac}) proved in Ref.~\cite{Capitanelli2} and is thus omitted, the key point being Lemma~\ref{lemmeasbound2} and the use of Bessel kernels with Lemma~1 on p.~104 in Ref.~\cite{JONSSON-1984}. In addition~(\ref{contrl2trfrac}) is a direct consequence of~\cite[Theorem~5.1]{HINZ-2021}. 
\end{proof}
The following theorem extends functions of $H^1(\Omega_m)$ to space $H^1(\mathbb{R}^n)$ by an operator whose norm is independent of the (increasing) number of sides. It is a particular case of the extension theorem due to Jones (Theorem 1 in Ref.~\cite{JONES-1981}) as the domains $\Omega_m$ are $(\varepsilon,\infty)-$domains with $\varepsilon$ independent of $m$. We also give the extension result for the limit domain $\Omega$ coming from the Rogers extension theorem~\cite{Rogers} due to a ``degree-independent'' operator for Sobolev spaces on $(\eps,\infty)$-domains.
\begin{theorem}\label{extR2prefract}
For any $m\in \mathbb{N}$, there exists a bounded linear extension operator 
\\$E_{\Omega_m}:H^1(\Omega_m)\rightarrow H^1(\mathbb{R}^n)$, whose norm is independent of $m$, that is
\begin{equation}
\Vert E_{\Omega_m} v\Vert_{H^1(\mathbb{R}^n)}\leq C_J \Vert v\Vert_{H^1(\Omega^{n})}
\end{equation}
with a constant $C_J>0$ independent of $m$.

In addition, for the $(\eps,\infty)$-domain $\Omega$ with a fractal boundary $K$ there exists a bounded linear extension operator $E_{\Omega}:H^{\sigma}(\Omega)\rightarrow H^{\sigma}(\mathbb{R}^n)$, $\frac{1}{2}<\sigma\leq 1$, such that
\begin{equation}\label{extR2fract}
\Vert E_{\Omega} v\Vert_{H^{\sigma}(\mathbb{R}^n)}\leq C_{\Omega} \Vert v\Vert_{H^{\sigma}(\Omega)}.
\end{equation}
\end{theorem}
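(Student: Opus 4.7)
The plan is to deduce both assertions directly from the Jones/Rogers extension machinery by exploiting the uniform geometric control provided by Theorem~\ref{thm-sOSC}. The key observation is that Jones' construction of the extension operator in~\cite{JONES-1981} for an $(\eps,\delta)$-domain produces an operator whose operator norm depends only on $\eps$, $\delta$ and the ambient dimension $n$, and \emph{not} on any additional metric feature of the domain (such as the number of polyhedral faces or the perimeter). Consequently, once we know that an entire family of domains shares the same parameters $(\eps,\delta)$, the corresponding family of Jones extensions is equibounded.

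First I would invoke Theorem~\ref{thm-sOSC}, which under Assumptions~\ref{a-face} and~\ref{a-OSC} ensures that all prefractal approximations $\Omega_m$ and the limiting domain $\Omega$ are $(\eps,\infty)$-domains with one and the same $\eps>0$, independent of $m$. Applying Jones' extension theorem~\cite[Theorem~1]{JONES-1981} on each $\Omega_m$ yields a bounded linear operator $E_{\Omega_m}:H^1(\Omega_m)\to H^1(\R^n)$, and by inspection of the construction in~\cite{JONES-1981} (based on a Whitney decomposition and reflection across a uniform family of ``chains'' whose length and geometry are controlled solely by $\eps$) the resulting norm satisfies $\|E_{\Omega_m}\|\le C_J=C_J(\eps,n)$, which is the first claim with a uniform constant.

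For the second assertion I would appeal to the extension theorem of Rogers~\cite{Rogers}, which extends Jones' result to the fractional Sobolev scale: on any $(\eps,\infty)$-domain there exists a single linear operator simultaneously bounded from $H^\sigma(\Omega)$ into $H^\sigma(\R^n)$ for all $\sigma$ in a range including $\tfrac12<\sigma\le 1$, with norm depending only on $\eps$, $n$ and $\sigma$. Applying this theorem to the limit domain $\Omega$ (which is $(\eps,\infty)$ by Theorem~\ref{thm-sOSC}) produces the desired operator $E_\Omega$ with the stated bound~(\ref{extR2fract}). Equivalently, one may note that $\Omega$ is a Sobolev admissible domain in the sense of Definition~\ref{defadmisdomain}, so Theorem~\ref{thmtradmissdom} already supplies an extension for $W^{k,p}$ with integer $k$, and the fractional range is recovered via real interpolation between $L^2(\Omega)$ and $H^1(\Omega)$ together with the commuting diagram satisfied by Jones/Rogers extensions.

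No substantial new obstacle arises here; the entire content of the statement is the fact that the constants are independent of $m$. The main point to emphasize in the write-up is therefore the careful verification that the constants in~\cite{JONES-1981} and~\cite{Rogers} depend only on $(\eps,n,\sigma)$ and not on any auxiliary quantity (such as the Lipschitz constant of a particular parametrization of $\partial\Omega_m$, which would blow up as $m\to\infty$). Since Theorem~\ref{thm-sOSC} secures a common $\eps$ for the whole family, the uniform bound $C_J$ follows at once, and this is precisely the property that will make the Mosco convergence argument in Subsection~\ref{subsecMosco} possible.
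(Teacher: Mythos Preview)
Your proposal is correct and follows essentially the same route as the paper: invoke Theorem~\ref{thm-sOSC} to obtain a common $\eps$ for all $\Omega_m$ and $\Omega$, then apply Jones' extension theorem~\cite{JONES-1981} for the uniform $H^1$-bound on the prefractals, and Rogers' extension theorem~\cite{Rogers} together with interpolation for the fractional range on the limit domain. The paper's proof is just a compressed version of exactly this argument.
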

\begin{proof}
The independence on $m$ comes from the fact that the $\Omega_m$ are $(\varepsilon,\infty)$ domains with $\varepsilon$ fixed according to Assumption~\ref{a-uniuni}.
Then we just have to apply the result of Ref.~\cite{JONES-1981} on quasiconformal mappings.
The extension result for the fractal domain $\Omega$ follows from the Rogers extension theorem~\cite[Thm.~8]{Rogers}, since by its definition $\Omega$ is $(\eps,\infty)$-domain, with the use of interpolation techniques (see also~\cite[Thm.~3.5]{Capitanelli2}). 
\end{proof}

\subsection{Mosco type convergence}\label{subsecMosco}
We consider a domain $\Omega$ of $\R^n$ defined in Subsection~\ref{subsecOSC} and its polyhedral approximation by domains $\Omega_m$. We suppose as in Section~\ref{secwpdampwavmix} that 
$$\partial\Omega=\Gamma_{D,\Omega}\cup\Gamma_{N,\Omega}\cup \Gamma_{R,\Omega} \quad \hbox{with} \quad
 \Gamma_{R,\Omega}=K $$
and
$$\partial\Omega_m=\Gamma_{D,\Omega_m}\cup\Gamma_{N,\Omega_m}\cup \Gamma_{R,\Omega_m} \quad \hbox{with} \quad \Gamma_{R,\Omega_m}=K_m, $$
where the parts of boundaries with letters $D$, $N$ and $R$ correspond to the type of the homogeneous boundary condition considered on them: the Dirichlet, the Neumann and the Robin boundary conditions respectively.

Our aim is to consider the limit $m\to +\infty$ of the weak solutions of the following Westervelt mixed boundary problem to show that a weak solution on $\Omega$ can be approximated by weak solutions on $\Omega_m$.
\begin{equation}\label{Kuzeqrobinpref}
\left\lbrace
\begin{array}{l}
\partial^2_t u-c^2 \Delta u-\nu \Delta\partial_t u=\alpha\partial_t[u\partial_t u]+f\;\;\hbox{on}\;]0,+\infty[\times \Omega_m,\\
u=0 \hbox{ on }\Gamma_{D,\Omega_m}\times[0,+\infty[,\\
\frac{\partial}{\partial n}u=0\hbox{ on }\Gamma_{N,\Omega_m}\times[0,+\infty[,\\
\frac{\partial}{\partial n}u+a_m u=0\hbox{ on }K_m\times[0,+\infty[,\\
u(0)=u_{0,m},\;\;u_t(0)=u_{1,m} \;\;\hbox{on}\;\Omega_m.
\end{array}\right.
\end{equation}

We set 
\begin{align}
H(\Omega):=H^1([0,+\infty[;H^1(\Omega))\cap & H^2([0,+\infty[;L^2(\Omega))\label{Moscospace}
\end{align}
and fixe a Sobolev admissible domain $\Omega^*$ such that $\Omega\subset\Omega^*$ for all $m\in\N^*$ $\Omega_m\subset\Omega^*$.

For $u\in H(\Omega^*)$ and $\phi\in L^2([0,+\infty[,H^1(\Omega^*))$ we define
\begin{align}
F_m[u,\phi]:=&\int_0^{+\infty}\int_{\Omega_m} \partial_t^2 u \phi+c^2 \nabla u \nabla \phi+\nu \nabla \partial_t u \nabla \phi\;d\lambda\dt\nonumber\\
&+\int_0^{+\infty} \int_{K_m} c^2 a_m Tr_{\partial\Omega_m} u \;Tr_{\partial\Omega_m} \phi + \nu a_m Tr_{\partial\Omega_m}\partial_t u \;Tr_{\partial\Omega_m} \phi\ds\dt\label{Fnu}\\
&\int_0^{+\infty}\int_{\Omega_m} -\alpha (u \partial_t^2 u) \phi -\alpha ( \partial_t u)^2 \phi+f\phi\;d\lambda\dt\nonumber
\end{align}
and also
\begin{align}
F[u,\phi]:=&\int_0^{+\infty}\int_{\Omega} \partial_t^2 u \phi+c^2 \nabla u \nabla \phi+\nu \nabla \partial_t u \nabla \phi\;d\lambda\dt\nonumber\\
&+\int_0^{+\infty} \int_{K} c^2 a Tr_{\partial\Omega} u \;Tr_{\partial\Omega} \phi + \nu a Tr_{\partial\Omega}\partial_t u\; Tr_{\partial\Omega} \phi{\rm d} \mu \dt\label{Fu}\\
&\int_0^{+\infty}\int_{\Omega} -\alpha (u \partial_t^2 u) \phi -\alpha ( \partial_t u)^2 \phi+f\phi\;d\lambda\dt.\nonumber
\end{align} 
 In addition, 
 for $u\in L^2([0,+\infty[;L^2(\Omega^*))$, we define for all $\phi\in L^2([0,+\infty[;H^1(\Omega^*))$
\begin{equation}\label{Fmbar}
\overline{F}_m[u,\phi]=\left\lbrace\begin{array}{ll}
F_m[u,\phi] & \hbox{ if } u\in H(\Omega^*),\\
+\infty & \hbox{otherwise}
\end{array}\right.
\end{equation}
and
\begin{equation}\label{Fbar}
\overline{F}[u,\phi]=\left\lbrace\begin{array}{ll}
F[u,\phi] & \hbox{ if } u\in H(\Omega^*),\\
+\infty & \hbox{otherwise}.
\end{array}\right.
\end{equation}
\begin{remark}\label{defweaksolutionKuz}
We see that $u$ is a weak solution of the Westervelt problem~(\ref{CauchypbWesdirhom}) on $[0,+\infty[ \times \Omega $ in the sense of Theorem~\ref{ThWPWestGlob} if there hold
\begin{itemize}
\item $u\in X$ with the space $X$ defined in~(\ref{solspacedirhom});
\item for all $\phi \in L^2([0,+\infty[;V(\Omega))$
$F[u,\phi]=0,$
where $F$ is defined in~(\ref{Fu});
\item $u(0)=u_0$ and $u_t(0)=u_1$ on $\Omega$.
\end{itemize}
\end{remark}
The expression $F[u,\phi]=0$ can be obtained multiplying the Westervelt equation from system~(\ref{CauchypbWesdirhom}) by $\phi \in X$ integrating on $[0,+\infty[\times \Omega$ and doing integration by parts taking into account the boundary conditions.
In the same way with $F_m[u,\phi]$ given by Eq.~(\ref{Fnu}) we can define the weak solution of problem~(\ref{Kuzeqrobinpref}).

In order to state our main result, we also need to recall the notion of $M-convergence$ of functionals introduced in Ref.~\cite{Mosco}.
\begin{definition}
A sequence of functionals $G^m:H\rightarrow (-\infty,+\infty]$ is said to $M$-converge to a functional $G:H\rightarrow (-\infty,+\infty]$ in a Hilbert space $H$, if
\begin{enumerate}
\item(lim sup condition) For every $u\in H$ there exists $u_m$ converging strongly in $H$ such that
\begin{equation}
\overline{\lim} G^m[u_m]\leq G[u],\;\;\;\hbox{as}\;m\rightarrow +\infty.
\end{equation}
\item(lim inf condition) For every $v_m$ converging weakly to $u$ in $H$
\begin{equation}
\underline{\lim} G^m[v_m]\geq G[u],\;\;\;\hbox{as}\;m\rightarrow +\infty.
\end{equation}
\end{enumerate}
\end{definition}
Because of the quadratic nonlinearity of the Westervelt equation to be controlled for weak solutions, we consider domains in $\R^n$ only with $n=2$ or $3$. For the linear problem, it is possible to work with higher dimensions too.
The main result is the following theorem.
\begin{theorem}\label{Mconv}
Let $\Omega$ be a fractal domain of $\R^2$ or $\R^3$ and $(\Omega_m)_{m\in \N^*}$ be the prefractal polyhedral sequence described and defined previously and satisfying Assumptions~\ref{a-face} and \ref{a-uniuni}, all included in a Sobolev admissible domain $\Omega^*$.
For $\phi\in L^2([0,+\infty[;H^1(\Omega^*))$, and $a_m=a\sigma_m$, where $\sigma_m $ defined in~(\ref{defsigmn}), the sequence of functionals $u\mapsto \overline{F}_m[u,\phi]$ defined in~(\ref{Fmbar}), $M$-converges in $L^2([0,+\infty[;L^2(\Omega^*))$, to the following functional $u\mapsto \overline{F}[u,\phi]$ defined in~(\ref{Fbar}) as $m\rightarrow +\infty$. 

Moreover, for all $\phi\in L^2([0,+\infty[;H^1(\Omega^*))$ if $v_m\rightharpoonup u$ in $H(\Omega^*)$ defined in~(\ref{Moscospace}), then 
$$ F_m[v_m,\phi]\underset{m\rightarrow +\infty}{\longrightarrow}F[u,\phi],$$
where $F_m$ and $F$ are defined by equations~(\ref{Fnu}) and~(\ref{Fu}) respectively.
\end{theorem}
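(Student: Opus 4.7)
The bulk of the proof lies in the second assertion (direct convergence), from which the $M$-convergence then follows, so I would address the direct convergence first. Fix $\phi\in L^2([0,+\infty[;H^1(\Omega^*))$ and assume $v_m\rightharpoonup u$ weakly in $H(\Omega^*)$. Using the uniform extension of Theorem~\ref{extR2prefract}, extend each $v_m$ to $\tilde v_m\in H^1(\mathbb{R}^n)$ pointwise in time; the resulting family is uniformly bounded in $H^1([0,+\infty[;H^1(\mathbb{R}^n))\cap H^2([0,+\infty[;L^2(\mathbb{R}^n))$, and after extracting a subsequence converges weakly to some $\tilde u$ that restricts to $u$ on $\Omega$. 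Applying the Aubin--Lions lemma to the compact embedding $H^1\subset\subset L^2$ on the reference domain $\Omega^*$, combined with interpolation $[L^2,H^1]_\tau=H^\tau$, yields the strong convergence $\tilde v_m\to\tilde u$ in $L^2_{loc}([0,+\infty[;H^\tau(\mathbb{R}^n))$ for every $\tau\in(1/2,1)$.

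I then split $F_m[v_m,\phi]-F[u,\phi]$ into three groups of terms. The linear volume integrals are rewritten as $\int_{\Omega^*}\chi_{\Omega_m}(\cdot)$; by the polyhedral approximation and the strong open set condition (Assumption~\ref{a-OSC}), $\chi_{\Omega_m}\to\chi_\Omega$ in $L^1$ and a.e., which combined with the weak convergences of $\partial_t^2 v_m$, $\nabla v_m$ and $\nabla\partial_t v_m$ transfers each integral from $\Omega_m$ to $\Omega$ in the limit. For the boundary terms $a\sigma_m\int_{K_m}(\operatorname{Tr} v_m)(\operatorname{Tr}\phi)\,ds$, decompose as $a\sigma_m\int_{K_m}\operatorname{Tr}(v_m-u)\operatorname{Tr}\phi\,ds+a\sigma_m\int_{K_m}(\operatorname{Tr} u)(\operatorname{Tr}\phi)\,ds$. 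The first summand is bounded by $C\|\tilde v_m-\tilde u\|_{H^\tau(\mathbb{R}^n)}\|\tilde\phi\|_{H^\tau(\mathbb{R}^n)}$ via the uniform trace estimate of Theorem~\ref{THcontrl2tr}, and thus vanishes by the strong $H^\tau$-convergence. For the second summand, approximate $\phi$ by $\phi^\varepsilon\in C_c^\infty(\mathbb{R}^n)$: since $u\phi^\varepsilon\in H^1(\mathbb{R}^n)$ with $\operatorname{Tr}(u\phi^\varepsilon)=(\operatorname{Tr} u)(\operatorname{Tr}\phi^\varepsilon)$, Theorem~\ref{convtrace} applied to $g=u\phi^\varepsilon$ gives convergence to $\int_K(\operatorname{Tr} u)(\operatorname{Tr}\phi^\varepsilon)\,d\mu$, and Theorem~\ref{THcontrl2tr} once more removes the $\varepsilon$-regularization.

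The main obstacle is the third group, the nonlinear terms $\int_{\Omega_m}\alpha(v_m\partial_t^2 v_m+(\partial_t v_m)^2)\phi$, and it is precisely this step that forces $n\in\{2,3\}$. In these dimensions the Sobolev embedding $H^1\hookrightarrow L^6$ together with Aubin--Lions compactness produces strong convergences $v_m\to u$ and $\partial_t v_m\to\partial_t u$ in $L^2_{loc}([0,+\infty[;L^p(\Omega^*))$ for every $p<6$, while $\partial_t^2 v_m\rightharpoonup\partial_t^2 u$ weakly in $L^2$ and $\phi\in L^2([0,+\infty[;L^6(\Omega^*))$. H\"older's inequality with exponents $(6,3,2)$ for $\int(v_m-u)\partial_t^2 v_m\phi$, and with suitable exponents summing to $1$ both for $\int u(\partial_t^2 v_m-\partial_t^2 u)\phi$ and for the analogous split of the quadratic term $(\partial_t v_m)^2\phi$, then yield convergence. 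The difference between integrating over $\Omega_m$ and over $\Omega$ is absorbed via $|\Omega_m\triangle\Omega|\to 0$ and the equi-integrability supplied by the uniform $L^6$ bounds.

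The $M$-convergence of $\overline F_m$ to $\overline F$ in $L^2([0,+\infty[;L^2(\Omega^*))$ is then a corollary. For the lim sup condition, take the constant recovery sequence $u_m\equiv u$: if $u\in H(\Omega^*)$ the direct convergence above gives $\overline F_m[u,\phi]\to\overline F[u,\phi]$, and otherwise both sides equal $+\infty$. For the lim inf condition with $v_m\rightharpoonup u$ weakly in $L^2$, only the case $\sup_m\overline F_m[v_m,\phi]<+\infty$ is nontrivial; then $v_m\in H(\Omega^*)$ for all $m$, and a coercivity-type extraction isolates a subsequence bounded in $H(\Omega^*)$ whose weak $H$-limit must coincide with $u$, after which the direct convergence just established provides equality in the lim inf, not merely inequality.
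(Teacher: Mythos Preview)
Your treatment of the direct convergence assertion (the ``Moreover'' clause) matches the paper's almost term by term: the same three-way split into linear volume integrals, boundary integrals, and nonlinear terms; the same passage from $\Omega_m$ to $\Omega$ via $\mathds{1}_{\Omega_m}\to\mathds{1}_\Omega$; the same decomposition of the boundary piece into a $(v_m-u)$ part controlled by the uniform trace bound of Theorem~\ref{THcontrl2tr} against strong $H^\sigma$-convergence, and a $u$ part handled by smoothing $\phi$ and invoking Theorem~\ref{convtrace}; and the same use of $H^1\hookrightarrow L^6$ in dimensions $n\le 3$ together with compactness to push the nonlinear terms through H\"older. The lim sup part of $M$-convergence via the constant recovery sequence $u_m\equiv u$ is likewise exactly what the paper does.

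There is, however, a genuine gap in your lim inf argument. You write that from $\sup_m\overline F_m[v_m,\phi]<+\infty$ ``a coercivity-type extraction isolates a subsequence bounded in $H(\Omega^*)$''. No such coercivity exists: $F_m[\cdot,\phi]$ is the pairing of the equation with a \emph{fixed} test function $\phi$, not a quadratic energy, and its finiteness says only that $v_m\in H(\Omega^*)$, nothing about the size of $\|v_m\|_{H(\Omega^*)}$. (For instance, if $\phi=0$ then $F_m[v_m,0]=0$ for every $v_m$.) So you cannot manufacture an $H(\Omega^*)$-bounded subsequence from the values of $\overline F_m$ alone. The paper does not attempt this extraction: in its ``Proof of the lim inf condition'' it simply takes $(v_m)$ already bounded and weakly convergent in $H(\Omega^*)$, i.e.\ it verifies the lim inf inequality under the hypothesis of the ``Moreover'' clause rather than under the bare $L^2$-weak convergence of the abstract $M$-convergence definition. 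Your argument should do the same, or else the lim inf half needs a different justification.
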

\begin{remark}
 If $(\Omega_m)_{m\in \N^*}$ is a monotone increasing sequence up to $\Omega$, $i.e.$ $\Omega_m\subset \Omega$ for all $m$, then it is not necessary to take $\Omega^*$ different to $\Omega$, it is sufficient to take $\Omega^*=\Omega$. In all cases, thanks to Theorem~\ref{extR2prefract}, 
 functions $v_m(t)\in H^1(\Omega_m)$ can be uniformly on $m$ extended to the functions $E_{\Omega_m} v_m(t)\in H^1(\R^n)$ and after it we work with their restrictions on $\Omega^*$: $\left[E_{\Omega_m} v_m(t)\right]|_{\Omega^*}\in H^1(\Omega^*)$. By the same way, for $u(t)\in H^1(\Omega)$ we consider if $\Omega\ne \Omega^*$ $\left[E_{\Omega} u(t)\right]|_{\Omega^*}\in H^1(\Omega^*)$. To avoid complicated notations we work directly with functions from $H^1(\Omega^*)$. 
 
 We also make the attention that we don't impose on $(\Omega_m)_{m\in \N^*}$ any restriction to be monotone, but only to satisfy Assumptions~\ref{a-face} and \ref{a-uniuni} and, by the fractal approximation, to converge to $\Omega$ in the sense of the characteristic functions: $\|\mathds{1}_{\Omega_m}-\mathds{1}_{\Omega}\|_{L^1(\Omega^*)}\to 0$ for $m\to +\infty$.
 \end{remark}

\begin{proof}
We consider $\phi\in L^2([0,+\infty[;H^1(\Omega^*))$.

\textbf{Proof of "lim sup" condition.} Without loss of generality, let us take directly a fixed $u\in H(\Omega^*)$ and define $v_m=u$ for all $m$. Hence $(v_m)_{m\in \N^*}$ is strongly converging sequence in $L^2([0,+\infty[;L^2(\Omega^*))$. Thus by the definition of functionals $\overline{F_m}[u,\phi]$ and $\overline{F}[u,\phi]$, they are equal respectively to 
$F_m[u,\phi]$ and $F[u,\phi]$, which are well defined (and hence are finite).
As by our construction $\Omega_m\rightarrow \Omega$ 
for $m\to +\infty$ in the sense of the characteristic functions
 and $u \in H(\Omega^*)$, for the linear terms in~(\ref{Fnu}) integrated over $\Omega_m$ to pass to the limit we can directly apply the 
 dominated convergence theorem for $m\rightarrow +\infty$
\begin{multline}
\int_0^{+\infty}\int_{\Omega_m} \partial_t^2 u \phi+c^2 \nabla u \nabla \phi+\nu \nabla \partial_t u \nabla \phi\dx\dt\\
\rightarrow \int_0^{+\infty}\int_{\Omega} \partial_t^2 u \phi+c^2 \nabla u \nabla \phi+\nu \nabla \partial_t u \nabla \phi\dx\dt.\label{limsupconv1}
\end{multline}
Indeed, knowing that $u\in H(\Omega^*)$ and $\phi\in L^2([0,+\infty[;H^1(\Omega^*))$ by H\"older's inequality we have
\begin{align*}
\int_0^{+\infty}\int_{\Omega^*} \vert \partial_t^2 u \phi\vert\dx\dt
\leq & \Vert \partial_t^2 u \Vert_{L^2([0,+\infty[;L^2(\Omega^*))} \Vert \phi\Vert_{L^2([0,+\infty[;L^2(\Omega^*))}<+\infty,\\
\int_0^{+\infty}\int_{\Omega^*} \vert c^2 \nabla u \nabla \phi\vert\dx\dt
\leq & c^2 \Vert \nabla u \Vert_{L^2([0,+\infty[;L^2(\Omega^*))} \Vert\nabla \phi\Vert_{L^2([0,+\infty[;L^2(\Omega^*))}<+\infty,\\
\int_0^{+\infty}\int_{\Omega^*} \vert \nu \varepsilon \nabla \partial_t u \nabla \phi\vert\dx\dt
\leq & \nu \Vert \nabla \partial_t u \Vert_{L^2([0,+\infty[;L^2(\Omega^*))} \Vert \nabla\phi\Vert_{L^2([0,+\infty[;L^2(\Omega^*))}<+\infty.
\end{align*}
To pass to the limit for the nonlinear terms integrated over $\Omega_m$ we also apply the dominated convergence theorem
\begin{align}
\int_0^{+\infty}\int_{\Omega_m} -\alpha (u \partial_t^2 u) \phi -\alpha ( \partial_t u)^2 \phi\dx\dt
\rightarrow \int_0^{+\infty}\int_{\Omega} -\alpha (u \partial_t^2 u) \phi -\alpha ( \partial_t u)^2 \phi\dx\dt.\label{limsupconv1bis}
\end{align}
More precisely, we successively apply H\"older's inequality and the Sobolev embeddings to control
\begin{align*}
\int_0^{+\infty}\int_{\Omega^*} \vert (u & \partial_t^2 u) \phi\vert \dx\dt\\
\leq & \Vert u\Vert_{L^{\infty}([0,+\infty[;L^4(\Omega^*))}\Vert \partial^2_t u\Vert_{L^2([0,+\infty[;L^2(\Omega^*))} \Vert \phi\Vert_{L^2([0,+\infty[;L^4(\Omega^*))}\\
\leq & C \Vert u\Vert_{H^1([0,+\infty[;H^1(\Omega^*))}\Vert \partial^2_t u\Vert_{L^2([0,+\infty[;L^2(\Omega^*))} \Vert \phi\Vert_{L^2([0,+\infty[;H^1(\Omega^*))}<+\infty
\end{align*}
and 
\begin{align*}
\int_0^{+\infty}\int_{\Omega^*} \vert & ( \partial_t u)^2 \phi\vert\dx\dt \\
\leq & \Vert \partial_t u\Vert_{L^{\infty}([0,+\infty[;L^2(\Omega^*))} \Vert \partial_t u\Vert_{L^{2}([0,+\infty[;L^4(\Omega^*))} \Vert \phi\Vert_{L^2([0,+\infty[;L^4(\Omega^*))}\\
\leq & C \Vert \partial_t u\Vert_{H^1([0,+\infty[;L^2(\Omega^*))} \Vert \partial_t u\Vert_{L^{2}([0,+\infty[;H^1(\Omega^*))} \Vert \phi\Vert_{L^2([0,+\infty[;H^1(\Omega^*))}<+\infty.
\end{align*}

Let $(\phi_j)_{j\in \N^*}\subset C^{\infty}([0,+\infty[\times\Omega^*)$ be a bounded sequence converging to $\phi$:
$$\phi_j\underset{j\rightarrow +\infty}{\rightarrow}\phi\quad\hbox{in}\quad L^2([0,+\infty[,H^1(\Omega^*)).$$
Thus we can express the difference between the boundary therms as follows
\begin{align}
\int_0^{+\infty} \int_{K_m}& \sigma_m Tr_{\partial\Omega_m} \partial_t u \;Tr_{\partial\Omega_m} \phi \ds\dt-\int_0^{+\infty} \int_{K} Tr_{\partial\Omega} \partial_t u \;Tr_{\partial\Omega} \phi d\mu \dt\nonumber\\
=&\int_0^{+\infty} \int_{K_m} \sigma_m Tr_{\partial\Omega_m} \partial_t u \;Tr_{\partial\Omega_m} (\phi-\phi_j) \ds\dt\label{limsupconvbound1}\\
&+\int_0^{+\infty} \int_{K_m} \sigma_m Tr_{\partial\Omega_m} \partial_t u \;Tr_{\partial\Omega_m} \phi_j \ds\dt-\int_0^{+\infty} \int_{K} Tr_{\partial\Omega} \partial_t u \;Tr_{\partial\Omega} \phi_j {\rm d} \mu \dt\nonumber\\
&-\int_0^{+\infty} \int_{K} Tr_{\partial\Omega} \partial_t u \;Tr_{\partial\Omega} (\phi-\phi_j) {\rm d} \mu \dt\nonumber.
\end{align}

Thanks to Theorems~\ref{contrl2trpref} and~\ref{ThGBesov}, we estimate the first integral in~(\ref{limsupconvbound1}) using the Cauchy-Schwartz inequality by
\begin{multline*}
\left| \int_0^{+\infty} \int_{K_m} \sigma_m Tr_{\partial\Omega_m} \partial_t u \;Tr_{\partial\Omega_m} (\phi-\phi_j) \ds\dt \right| \\
\leq C \Vert \partial_t u\Vert_{L^2([0,+\infty[,H^1(\Omega^*))} \Vert \phi-\phi_j\Vert_{L^2([0,+\infty[,H^1(\Omega^*))},
\end{multline*}
with a constant $C>0$ independent on $m$. Therefore, for all $\eps>0$ there exists $j_1\in \N^*$ such that for all $j\geq j_1$ and all $m\in \mathbb{N}$
\begin{equation}\label{limsupconvbound2}
\left\vert \int_0^{+\infty} \int_{K_m} \sigma_m Tr_{\partial\Omega_m} \partial_t u \;Tr_{\partial\Omega_m} (\phi-\phi_j) \ds\dt\right\vert\leq \frac{\varepsilon}{3}.
\end{equation}
In the same way by Theorems~\ref{THcontrl2tr} and~\ref{ThGBesov} we can show that there exists $j_2\in \N^*$ such that for all $j\geq j_2$ 
\begin{equation}\label{limsupconvbound3}
\left\vert \int_0^{+\infty} \int_{K} Tr_{\partial\Omega} \partial_t u \;Tr_{\partial\Omega} (\phi-\phi_j) {\rm d} \mu\dt\right\vert\leq \frac{\varepsilon}{3}.
\end{equation}
Let us now fix $j=\max (j_1,j_2)$. Given the regularity of $\phi_j$, we have $$\partial_t u\phi_j\in L^2([0,+\infty[,H^1(\Omega^*)).$$ So by Theorem~\ref{convtrace} for almost all time $t\in [0,+\infty[$ we find
$$\int_{K_m} \sigma_m Tr_{\partial\Omega_m} \partial_t u \;Tr_{\partial\Omega_m} \phi_j \ds- \int_{K} Tr_{\partial\Omega} \partial_t u \;Tr_{\partial\Omega} \phi_j {\rm d} \mu\underset{m\rightarrow +\infty}{\rightarrow} 0.$$
Moreover by~\eqref{contrl2trpref} and Theorem~\ref{ThGBesov}
\begin{align*}
\left\vert \int_{K_m} \sigma_m Tr_{\partial\Omega_m} \partial_t u \;Tr_{\partial\Omega_m} \phi_j \ds\dt\right\vert\leq 
C \Vert\partial_t u\Vert_{H^1(\Omega^*)} \Vert \phi_j \Vert_{H^1(\Omega^*)},
\end{align*}
with a constant $C>0$ independent on $m$. We notice that since $u\in H(\Omega^*)$ $$\Vert\partial_t u\Vert_{H^1(\Omega^*)} \Vert \phi_j \Vert_{H^1(\Omega^*)}\in L^1([0,\infty[).$$
Consequently, by the dominated convergence theorem
\begin{equation}\label{limsupconvbound4}
\left|\int_0^{+\infty} \int_{K_m} \sigma_m Tr_{\partial\Omega_m} \partial_t u \;Tr_{\partial\Omega_m} \phi_j \ds\dt-\int_0^{+\infty} \int_{K} Tr_{\partial\Omega} \partial_t u \;Tr_{\partial\Omega} \phi_j {\rm d} \mu \dt\right|\underset{m\rightarrow +\infty}{\rightarrow} 0.
\end{equation}
Thus, putting together~(\ref{limsupconvbound2}),~(\ref{limsupconvbound3}) and~(\ref{limsupconvbound4}) in~(\ref{limsupconvbound1}), we finally obtain that
\begin{equation}\label{limsupconvbound5}
\int_0^{+\infty} \int_{K_m} \sigma_m Tr_{\partial\Omega_m} \partial_t u \;Tr_{\partial\Omega_m} \phi \ds\dt-\int_0^{+\infty} \int_{K} Tr_{\partial\Omega} \partial_t u \;Tr_{\partial\Omega} \phi {\rm d} \mu \dt\underset{m\rightarrow +\infty}{\rightarrow} 0.
\end{equation}
In the same way we prove that
\begin{equation}\label{limsupconvbound6}
\int_0^{+\infty} \int_{K_m} \sigma_m Tr_{\partial\Omega_m} u \;Tr_{\partial\Omega_m} \phi \ds\dt-\int_0^{+\infty} \int_{K} Tr_{\partial\Omega} u \;Tr_{\partial\Omega} \phi {\rm d} \mu \dt\underset{m\rightarrow +\infty}{\rightarrow} 0.
\end{equation}
By using~(\ref{limsupconv1}),~(\ref{limsupconv1bis}),~(\ref{limsupconvbound5}),~(\ref{limsupconvbound6}) and the fact that by the dominated convergence theorem for $f\in L^2([0,+\infty[;L^2(\Omega))$
$$\int_{0}^{+\infty}\int_{\Omega_m}f\phi dx\dt\underset{m\rightarrow +\infty}{\rightarrow} \int_{0}^{+\infty}\int_{\Omega}f\phi \dx\dt,$$
we conclude that for all $\phi\in L^2([0,+\infty[,H^1(\Omega))$
$$F_m[u,\phi]\underset{m\rightarrow +\infty}{\longrightarrow} F[u,\phi].$$
This proves the "lim sup" condition since the infinite case obviously holds.

\textbf{Proof of the "lim inf" condition.}
Now, let $(v_m)_{m\in \N^*}$ be a bounded sequence in $H(\Omega^*)$ such that
$$v_m \rightharpoonup u\;\;\;\hbox{in}\;H(\Omega^*)\quad m\to +\infty.$$
Then by definition of $H(\Omega^*)$ in~(\ref{Moscospace}), it follows that
\begin{equation}\label{convdistr0}
\partial_t^2 v_m \rightharpoonup \partial_t^2 u\;\;\;\hbox{in}\; L^2([0,+\infty[;L^2(\Omega^*)),
\end{equation}
\begin{equation}\label{convdistr1}
\partial_t v_m \rightharpoonup \partial_t u,\; \nabla\partial_t v_m \rightharpoonup \nabla\partial_t u \;\;\;\hbox{in}\; L^2([0,+\infty[;L^2(\Omega^*)),
\end{equation}
and
\begin{equation}\label{convdistr2}
v_m\rightharpoonup u,\hbox{ } \nabla v_m \rightharpoonup \nabla u \;\;\;\hbox{in}\;L^2([0,+\infty[;L^2(\Omega^*)).
\end{equation}
Moreover, working in $\R^n$ with dimension $n\leq 3$, by Theorem~\ref{thmsobolembadm} it is possible to chose any $2 \leq p<6$ ensuring the compactness of the embedding $L^2([0,+\infty[;H^1(\Omega^*))\subset\subset L^2([0,+\infty[;L^p(\Omega^*))$. For higher dimension the desired assertion with $p\ge 2$ fails. So for $2 \leq p<6$
\begin{equation}\label{convdistr4}
 v_m\rightarrow u,\hbox{ }\partial_t v_m \rightarrow \partial_t u \hbox{ in } L^2([0,+\infty[;L^p(\Omega^*)).
\end{equation}
From the compact embedding of $H^1(\Omega^*)$ in $H^{\sigma}(\Omega^*)$ ($\frac{1}{2}<\sigma<1$) we also have that 
\begin{equation}\label{convdistr5}
 v_m\rightarrow u,\hbox{ }\partial_t v_m \rightarrow \partial_t u \hbox{ in } L^2([0,+\infty[;H^{\sigma}(\Omega^*)).
\end{equation}
Let $\phi\in L^2([0,\infty[,H^1(\Omega^*))$, we want to show that 
$$F_m[v_m,\phi] \underset{m\rightarrow+\infty}{\longrightarrow} F[u,\phi].$$
We start by studying the convergence of the terms with $\int_0^{+\infty}\int_{\Omega_m}$:
\begin{multline*}
\Big\vert \int_0^{+\infty}\int_{\Omega_m} \partial_t^2 v_m \phi \dx\ds- \int_0^{+\infty}\int_{\Omega} \partial_t u \partial_t \phi \dx\ds\Big\vert \leq \Big\vert \int_0^{+\infty}\int_{\Omega^*} \partial_t^2 v_m \mathbbm{1}_{\Omega_m} \phi\dx\ds\\
\;\;\;\;-\int_0^{+\infty} \int_{\Omega^*} \partial_t^2 v_m \mathbbm{1}_{\Omega}\phi\dx\ds\Big\vert
+\Big\vert \int_0^{+\infty}\int_{\Omega^*} \partial_t^2 v_m \mathbbm{1}_{\Omega} \phi\dx\ds
\;\;\;\;-\int_0^{+\infty} \int_{\Omega^*} \partial_t^2 u \mathbbm{1}_{\Omega} \phi\dx\ds\Big\vert.
\end{multline*}
The second term on the right hand side tends to zero as $m\rightarrow+\infty$ by~(\ref{convdistr0}) as $\mathbbm{1}_{\Omega}\partial_t\phi\in L^2([0,+\infty[;L^2(\Omega^*))$.
For the first term
$$\Big\vert \int_0^{+\infty}\int_{\Omega^*} \partial_t^2 v_m (\mathbbm{1}_{\Omega_m}-\mathbbm{1}_{\Omega}) \phi\dx\ds\Big\vert\leq \Vert (\mathbbm{1}_{\Omega_m}-\mathbbm{1}_{\Omega})\phi\Vert_{L^2([0,+\infty[\times\Omega^*) }\Vert \partial_t^2 v_m\Vert_{L^2([0,+\infty[\times\Omega^*)},$$
but $\Vert \partial_t^2 v_m\Vert_{L^2([0,+\infty[\times\Omega^*)}$ is bounded for all $m$ by~(\ref{convdistr0}) and by the dominated convergence theorem
$$\Vert (\mathbbm{1}_{\Omega_m}-\mathbbm{1}_{\Omega})\phi\Vert_{L^2([0,+\infty[\times\Omega^*) }\underset{m\rightarrow+\infty}{\longrightarrow} 0.$$
Then for $m\rightarrow+\infty$
$$\int_0^{+\infty}\int_{\Omega_m} \partial_t^2 v_m \phi \dx\ds\rightarrow \int_0^{+\infty}\int_{\Omega} \partial_t^2 u \phi \dx\ds.$$
 Using~(\ref{convdistr2}) we can deduce in the same way
\begin{multline}
\int_0^{+\infty}\int_{\Omega_m} \partial_t^2 v_m \phi+c^2 \nabla v_m \nabla \phi+\nu \varepsilon \nabla\partial_t v_m \nabla \phi\; \dx\dt\\
\underset{m\rightarrow+\infty}{\longrightarrow} \int_0^{+\infty}\int_{\Omega} \partial_t^2 u \phi+c^2 \nabla u \nabla \phi+\nu \varepsilon \nabla \partial_t u \nabla \phi \dx\dt.\label{liminfconv1}
\end{multline}
For the quadratic terms we have
\begin{multline}
\left\vert \int_0^{+\infty}\int_{\Omega_m} (v_m \partial_t^2 v_m) \phi dx dt-\int_0^{+\infty}\int_{\Omega} (u \partial_t^2 u) \phi dx dt\right\vert \\\leq \left\vert \int_0^{+\infty}\int_{\Omega_m} (v_m \partial_t^2 v_m) \phi dx dt - \int_0^{+\infty}\int_{\Omega} (v_m \partial_t^2 v_m) \phi dx dt\right\vert\\
+\left \vert\int_0^{+\infty}\int_{\Omega} ( v_m \partial_t^2 v_m) \phi dx dt -\int_0^{+\infty}\int_{\Omega} (u \partial_t^2 u) \phi dx dt\right\vert.\label{liminfconvquadstep1}
\end{multline}
To show that the first term on the right hand side tends to $0$ for $m\rightarrow +\infty$ we use the fact that by H\"older's inequality
\begin{align*}
\left\vert \int_0^{+\infty}\right. &\int_{\Omega_m} (v_m \partial_t^2 v_m) \phi dx dt -\left. \int_0^{+\infty}\int_{\Omega} (v_m \partial_t^2 v_m) \phi dx dt\right\vert\\
\leq & \Vert (\mathbbm{1}_{\Omega_m}-\mathbbm{1}_{\Omega})\phi\Vert_{L^2([0,+\infty[;L^4(\Omega^*)) } \Vert v_m\Vert_{L^{\infty}([0,+\infty[;L^4(\Omega^*))}\Vert \partial_t^2 v_m \Vert_{L^2([0,+\infty[;L^2(\Omega^*))}.
\end{align*}
Using the Sobolev embeddings we have for all $m$
\begin{align*}
\Vert v_m\Vert_{L^{\infty}([0,+\infty[;L^4(\Omega^*))} & \Vert \partial_t^2 v_m \Vert_{L^2([0,+\infty[;L^2(\Omega^*))}\\
\leq & C \Vert v_m\Vert_{H^1([0,+\infty[;H^1(\Omega^*))}\Vert \partial_t^2 v_m \Vert_{L^2([0,+\infty[;L^2(\Omega^*))} \leq K
\end{align*}
with a constant $K>0$ independent on $m$, as $(v_m)_{m\in \N^*}$ is weakly convergent in $H(\Omega^*)$. Moreover, as by the Sobolev embedding we have 
$$\phi\in L^2([0,+\infty[;H^1(\Omega^*))\subset\subset L^2([0,+\infty[;L^4(\Omega^*)),$$ 
then by the dominated convergence theorem we obtain
$$\Vert (\mathbbm{1}_{\Omega_m}-\mathbbm{1}_{\Omega})\phi\Vert_{L^2([0,+\infty[;L^4(\Omega^*)) }\underset{m\rightarrow+\infty}{\longrightarrow} 0.$$
So 
\begin{equation}\label{liminfconvquadstep2}
\left\vert \int_0^{+\infty}\int_{\Omega_m} (v_m \partial_t^2 v_m) \phi dx dt- \int_0^{+\infty}\int_{\Omega} (v_m \partial_t^2 v_m) \phi dx dt\right\vert \underset{m\rightarrow+\infty}{\longrightarrow} 0.
\end{equation}
Now we consider 
$$\left \vert\int_0^{+\infty}\int_{\Omega} ( v_m \partial_t^2 v_m) \phi dx dt -\int_0^{+\infty}\int_{\Omega} (u \partial_t^2 u) \phi dx dt\right\vert.$$
We see that
\begin{align*}
\Vert v_m \phi - u \phi\Vert_{L^2([0,+\infty[;L^2(\Omega))}=& \int_{0}^{+\infty}\Vert (v_m-u)\phi\Vert_{L^2(\Omega)}^2ds.\\
\intertext{Consequently, by the Young inequality}
\Vert v_m \phi - u \phi\Vert_{L^2([0,+\infty[;L^2(\Omega))} \leq & \int_{0}^{+\infty} \Vert v_m -u\Vert_{L^3(\Omega)}^2 \Vert \phi \Vert_{L^6(\Omega)}^2ds
\intertext{and by the Sobolev embeddings we find}
\Vert v_m \phi - u \phi\Vert_{L^2([0,+\infty[;L^2(\Omega))} \leq & K \int_{0}^{+\infty} \Vert v_m -u\Vert_{H^1(\Omega)}^2 \Vert \phi \Vert_{H^1(\Omega)}^2ds\\
\Vert v_m \phi - u \phi\Vert_{L^2([0,+\infty[;L^2(\Omega))} \leq & K \Vert v_m -u\Vert_{L^{\infty}([0,+\infty[; H^1(\Omega)}^2 \Vert \phi\Vert_{L^2([0,+\infty[;L^2(\Omega))}^2.
\end{align*}
Here $K>0$ is a general constant independing on $m$.
But we have
$v_m\rightharpoonup u$ in $H^1([0,+\infty[;H^1(\Omega^*))\subset\subset L^{\infty}([0,+\infty[;H^1(\Omega))$, so $v_m\rightarrow u$ in $L^{\infty}([0,+\infty[;H^1(\Omega))$.
Hence 
$$ v_m\phi \rightarrow u\phi \hbox{ in }L^2([0,+\infty[;L^2(\Omega)).$$
Combining this strong convergence result with the weak convergence~(\ref{convdistr0}) we obtain 
\begin{equation}\label{liminfconvquadstep3}
\left \vert\int_0^{+\infty}\int_{\Omega} ( v_m \partial_t^2 v_m) \phi dx dt -\int_0^{+\infty}\int_{\Omega} (u \partial_t^2 u) \phi dx dt\right\vert\underset{m\rightarrow+\infty}{\longrightarrow} 0.
\end{equation}
Then~(\ref{liminfconvquadstep1}),(\ref{liminfconvquadstep2}) and~(\ref{liminfconvquadstep3}) allow us to conclude that
\begin{equation}\label{liminfconvquad1}
\left\vert \int_0^{+\infty}\int_{\Omega_m} (v_m \partial_t^2 v_m) \phi dx dt-\int_0^{+\infty}\int_{\Omega} (u \partial_t^2 u) \phi dx dt\right\vert \underset{m\rightarrow+\infty}{\longrightarrow} 0.
\end{equation}
Now we consider
\begin{multline}
\left\vert \int_0^{+\infty}\int_{\Omega_m} ( \partial_t v_m)^2 \phi dx dt-\int_0^{+\infty}\int_{\Omega} ( \partial_t u)^2 \phi dx dt\right\vert \\
\leq \left \vert \int_0^{+\infty}\int_{\Omega_m} ( \partial_t v_m)^2 \phi dx dt- \int_0^{+\infty}\int_{\Omega} ( \partial_t v_m)^2 \phi dx dt\right\vert\\
+\left\vert \int_0^{+\infty}\int_{\Omega} ( \partial_t v_m)^2 \phi dx dt -\int_0^{+\infty}\int_{\Omega} ( \partial_t u)^2 \phi dx dt\right\vert\label{liminfconvquad2step1}
\end{multline}
The first term goes to $0$ when $m$ goes to infinity in the same way that for the proof of~(\ref{liminfconvquadstep2}), moreover we have:
\begin{align*}
\left\vert \int_0^{+\infty}\int_{\Omega} ( \partial_t v_m)^2 \phi dx dt -\int_0^{+\infty}\int_{\Omega} ( \partial_t u)^2 \phi dx dt\right\vert=& \left\vert \int_0^{+\infty}\int_{\Omega} (\partial_t v_m-\partial_t u)(\partial_t v_m+\partial_t u)\phi dx dt\right\vert.
\end{align*}
By the Young inequality
\begin{align*}
\left\vert \int_0^{+\infty}\int_{\Omega}( ( \partial_t v_m)^2-( \partial_t u)^2 ) \phi dx dt \right\vert\leq & \int_0^{+\infty} \Vert \partial_t v_m-\partial_t u\Vert_{L^3(\Omega)}\Vert \partial_t v_m+\partial_t u\Vert_{L^2(\Omega)}\Vert \phi\Vert_{L^6(\Omega)}dt
\end{align*}
and by the Sobolev embeddings and the Cauchy-Schwarz inequality we find
\begin{multline*}
\left\vert \int_0^{+\infty}\int_{\Omega}( ( \partial_t v_m)^2-( \partial_t u)^2 ) \phi dx dt \right\vert\leq \Vert \partial_t v_m+\partial_t u\Vert_{L^{\infty}([0,+\infty[;L^2(\Omega))}\\\cdot\Vert\partial_t v_m-\partial_t u\Vert_{L^{2}([0,+\infty[;L^3(\Omega))}\Vert \phi\Vert_{L^2([0,+\infty[;H^1(\Omega))}.
\end{multline*}
By~(\ref{convdistr4}) $\Vert\partial_t v_m-\partial_t u\Vert_{L^{2}([0,+\infty[;L^3(\Omega))}\underset{m\rightarrow+\infty}{\longrightarrow} 0$ and, as
$$\partial_t v_m \rightharpoonup \partial_t u \hbox{ in } H^1([0,+\infty[;L^2(\Omega))\subset\subset L^{\infty}([0,+\infty[;L^2(\Omega)),$$
the numerical sequence $(\Vert \partial_t v_m+\partial_t u\Vert_{L^{\infty}([0,+\infty[;L^2(\Omega))})_{m\in \N^*}$ is bounded. Consequently
$$\left\vert \int_0^{+\infty}\int_{\Omega}( ( \partial_t v_m)^2-( \partial_t u)^2 ) \phi dx dt \right\vert\underset{m\rightarrow+\infty}{\longrightarrow} 0.$$
Coming back to~(\ref{liminfconvquad2step1}) we obtain
\begin{equation}\label{liminfconvquad2}
\left\vert \int_0^{+\infty}\int_{\Omega_m} ( \partial_t v_m)^2 \phi dx dt-\int_0^{+\infty}\int_{\Omega} ( \partial_t u)^2 \phi dx dt\right\vert \underset{m\rightarrow+\infty}{\longrightarrow} 0.
\end{equation}

Let us consider the boundary term 
\begin{align}
\int_0^{+\infty} \int_{K_m}& \sigma_m Tr_{\partial\Omega_m} \partial_t v_m \;Tr_{\partial\Omega_m} \phi \ds\dt-\int_0^{+\infty} \int_{K} Tr_{\partial\Omega} \partial_t u \;Tr_{\partial\Omega} \phi {\rm d} \mu \dt\nonumber\\
= & \int_0^{+\infty} \int_{K_m} \sigma_m Tr_{\partial\Omega_m} \partial_t (v_m-u) \;Tr_{\partial\Omega_m} \phi \ds\dt\label{liminfconvbound1}\\
&+\left(\int_0^{+\infty} \int_{K_m} \sigma_m Tr_{\partial\Omega_m} \partial_t u \;Tr_{\partial\Omega_m} \phi \ds\dt-\int_0^{+\infty} \int_{K} Tr_{\partial\Omega} \partial_t u \;Tr_{\partial\Omega} \phi {\rm d} \mu \dt\right).\nonumber
\end{align}
By~(\ref{limsupconvbound5}) we already have the convergence to zero of the second term in~(\ref{liminfconvbound1}). Now thanks to Theorems~\ref{THcontrl2tr} and~\ref{ThGBesov} we find
\begin{align*}
\vert\int_0^{+\infty} \int_{K_m} & \sigma_m Tr_{\partial\Omega_m} \partial_t (v_m-u) \;Tr_{\partial\Omega_m} \phi \ds\dt\vert\\
\leq & C \Vert E_{\mathbb{R}^2}( \partial_t v_m-\partial_t u)\Vert_{L^2([0,+\infty[,H^{\sigma}(\mathbb{R}^2))} \Vert E_{\mathbb{R}^2}\phi \Vert_{L^2([0,+\infty[,H^{1}(\mathbb{R}^2))}\\
\leq & C \Vert \partial_t v_m-\partial_t u\Vert_{L^2([0,+\infty[,H^{\sigma}(\Omega^*))} \Vert \phi \Vert_{L^2([0,+\infty[,H^{1}(\Omega^*))}
\end{align*}
with a constant $C>0$ independent on $m$. Then by~(\ref{convdistr5})
\begin{equation}\label{liminfconvbound2}
\left\vert\int_0^{+\infty} \int_{K_m} \sigma_m Tr_{\partial\Omega_m} \partial_t (v_m-u) \;Tr_{\partial\Omega_m} \phi \ds\dt\right\vert\underset{m\rightarrow +\infty}{\rightarrow}0.
\end{equation}
By~(\ref{liminfconvbound1}),~(\ref{liminfconvbound2}) and~(\ref{limsupconvbound5}) we result in
\begin{equation}\label{liminfconvbound3}
\int_0^{+\infty} \int_{K_m} \sigma_m Tr_{\partial\Omega_m} \partial_t v_m \;Tr_{\partial\Omega_m} \phi \ds\dt-\int_0^{+\infty} \int_{K} Tr_{\partial\Omega} \partial_t u \;Tr_{\partial\Omega} \phi {\rm d} \mu \dt\underset{m\rightarrow +\infty}{\rightarrow}0.
\end{equation} 
In the same way
\begin{equation}\label{liminfconvbound4}
\int_0^{+\infty} \int_{K_m} \sigma_m Tr_{\partial\Omega_m} v_m \;Tr_{\partial\Omega_m} \phi \ds\dt-\int_0^{+\infty} \int_{K} Tr_{\partial\Omega} u \;Tr_{\partial\Omega} \phi {\rm d} \mu \dt\underset{m\rightarrow +\infty}{\rightarrow}0.
\end{equation} 
So by~(\ref{liminfconv1}),~(\ref{liminfconvquad1}),~(\ref{liminfconvquad2}),~(\ref{liminfconvbound3}) and~(\ref{liminfconvbound4}) we have for all $\phi\in L^2([0,+\infty[;H^1(\Omega))$
$$F_m[v_m,\phi]\rightarrow F[u,\phi],$$
as $m\rightarrow+\infty$ and this conclude the proof.
\end{proof}

We finish by proving the weak convergence of the solutions of the Westervelt problem on the prefractal domains to the weak solution on the fractal domain.
\begin{theorem}\label{thmconv}
Let domains $\Omega$ and $\Omega_m$ in $\R^2$ or $\R^3$ be defined as previously satisfying Assumptions~\ref{a-face} and \ref{a-uniuni}, $\Omega^*$ be a Sobolev admissible domain such that $\Omega\subset\Omega^*$, $\forall m$ $\Omega_m\subset\Omega^*$ and
$$\partial\Gamma_{D,\Omega_m}=\partial\Gamma_{D,\Omega}=\partial\Gamma_{D,\Omega^*}.$$ 
For $g\in L^2(\Omega^*)$, let $u_0\in V(\Omega)$, $u_1\in V(\Omega)$, $\Delta u_0=g\vert_{\Omega}\in L^2(\Omega) $ in the sense of the Poisson problem~(\ref{PoissonDir1}) with $a>0$. In addition, let for all $m\in \N^*$ 
$u_{0,m}\in V(\Omega_m)$ and $u_{1,m}\in V(\Omega_m)$ with $ \Delta u_{0,m}=g\vert_{\Omega_m}\in L^2(\Omega_m)$ such that 
\begin{align*}
(E_{\mathbb{R}^2}u_{0,m})\vert_{\Omega}\underset{m\rightarrow+\infty}{\rightharpoonup} u_0 \hbox{ in }H^1(\Omega), \\
(E_{\mathbb{R}^2} u_{1,m})\vert_{\Omega} \underset{m\rightarrow+\infty}{\rightharpoonup} u_1\hbox{ in }H^1(\Omega).
\end{align*}
Then for $u_m\in X(\Omega_m)$, the weak solution of problem~(\ref{Kuzeqrobinpref}) on $\Omega_m$ associated to the initial conditions $u_{0,m}$ and $u_{1,m}$ in the sense of Theorem~\ref{ThWPWestGlob} with $a_m=a\sigma_m$, and $u\in X(\Omega)$, the weak solution of problem~(\ref{CauchypbWesdirhom}) on $\Omega$ in the sense of Theorem~\ref{ThWPWestGlob}, 
it follows that they are weak solutions in the sense of Remark~\ref{defweaksolutionKuz}
and 
$$(E_{\mathbb{R}^2}u_m)\vert_{\Omega^*}\rightharpoonup u^* \hbox{ in } H(\Omega^*)\hbox{ with } u^*\vert_{\Omega}=u,$$
where $H(\Omega^*)$ is defined in~(\ref{Moscospace}).
\end{theorem}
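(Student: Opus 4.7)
The approach is a standard compactness--plus--Mosco strategy: first show that the prefractal solutions $u_m$, once extended to $\Omega^*$, form a uniformly bounded sequence in $H(\Omega^*)$; then extract a weakly convergent subsequence; then identify its limit as a weak solution on $\Omega$ via the Mosco-type convergence of Theorem~\ref{Mconv}; and finally upgrade to convergence of the whole sequence by uniqueness.

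The first step requires uniformity in $m$ of the smallness constants in Theorem~\ref{ThWPWestGlob}. Since $(\Omega_m)_{m\in\N^*}$ are uniformly $(\epsilon,\infty)$-domains by Theorem~\ref{thm-sOSC}, Theorem~\ref{extR2prefract} supplies extension operators $E_{\Omega_m}:H^1(\Omega_m)\to H^1(\R^n)$ with norms independent of $m$. From this one deduces that the Poincar\'e constant on $V(\Omega_m)$ via Theorem~\ref{thmPoinc2}, the Sobolev embedding constant of Proposition~\ref{propapl6}, and the $L^\infty$ bound of Theorem~\ref{thmlinfestL2} are all uniform in $m$. Consequently the constants $B_1, B_2, C_\nu$ entering the proof of Theorem~\ref{ThWPWestGlob} do not depend on $m$, so the smallness radius $r_*$ can be chosen uniformly. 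Because $\Delta u_{0,m}=g|_{\Omega_m}$ with a fixed $g\in L^2(\Omega^*)$ and because $u_{1,m}$ is weakly convergent in $H^1(\Omega)$ hence bounded, the quantity $\|u_{0,m}\|_{\mathcal{D}(-\Delta)}+\|u_{1,m}\|_{V(\Omega_m)}$ is bounded uniformly in $m$. Theorem~\ref{ThWPWestGlob} thus produces a unique $u_m\in X^2(\Omega_m)$ with $\|u_m\|_{X^2(\Omega_m)}\le 2r$ for some $r\in[0,r_*[$, uniformly in $m$.

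Setting $\tilde u_m:=(E_{\Omega_m}u_m)|_{\Omega^*}$, the uniform extension bound gives a sequence bounded in $H(\Omega^*)$, so up to a subsequence $\tilde u_m\rightharpoonup u^*$ in $H(\Omega^*)$. For every test function $\phi\in L^2([0,+\infty[;V(\Omega^*))$, the weak formulation of~(\ref{Kuzeqrobinpref}) reads $F_m[u_m,\phi|_{\Omega_m}]=0$, which by the definition~(\ref{Fnu}) of $F_m$ coincides with $F_m[\tilde u_m,\phi]$. The second assertion of Theorem~\ref{Mconv} then yields $F_m[\tilde u_m,\phi]\to F[u^*,\phi]$, whence $F[u^*,\phi]=0$. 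Since any $\psi\in L^2([0,+\infty[;V(\Omega))$ can be extended to an element of $L^2([0,+\infty[;V(\Omega^*))$ by Theorem~\ref{thmtradmissdom} together with the hypothesis $\partial\Gamma_{D,\Omega}=\partial\Gamma_{D,\Omega^*}$, and since $F[u^*,\cdot]$ only sees the restriction of the test function to $\Omega$, it follows that $u^*|_\Omega$ satisfies the variational formulation~(\ref{weakformWes}). The initial-data identification uses the continuous embedding $H(\Omega^*)\hookrightarrow C([0,+\infty[;H^1(\Omega^*))\cap C^1([0,+\infty[;L^2(\Omega^*))$, under which evaluation at $t=0$ is weakly continuous; combined with the assumed weak convergence of $(u_{0,m},u_{1,m})$, this gives $u^*(0)|_\Omega=u_0$ and $\partial_t u^*(0)|_\Omega=u_1$. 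Uniqueness in Theorem~\ref{ThWPWestGlob} then forces $u^*|_\Omega=u$, and because every weakly convergent subsequence of $(\tilde u_m)$ has the same limit, the full sequence converges weakly in $H(\Omega^*)$.

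The main obstacle is the first step: certifying that the Sukhinin-type contraction scheme in the proof of Theorem~\ref{ThWPWestGlob} admits an $m$-independent smallness radius. All the nonlinear estimates in that scheme are traced back to Poincar\'e, the embedding $H^1\hookrightarrow L^6$, and the $L^\infty$ bound of Theorem~\ref{thmlinfestL2}, so the uniformity is pulled back from the uniform $(\epsilon,\infty)$-property of the $\Omega_m$ through the uniformly bounded extensions. The rest of the argument is either routine weak-compactness bookkeeping or a direct invocation of the already-proved Theorem~\ref{Mconv}; no monotonicity of the sequence $(\Omega_m)$ is used, which is why the argument covers e.g.\ Minkowski fractal approximations.
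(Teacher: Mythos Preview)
Your overall architecture is the paper's own: uniform bounds from the common $(\varepsilon,\infty)$-constant via Theorems~\ref{thm-sOSC}, \ref{thmPoinc2}, \ref{thmlinfestL2}, then weak compactness in $H(\Omega^*)$, then identification of the limit through Theorem~\ref{Mconv} and uniqueness. The part about tracking the domain-dependence of the constants in Theorem~\ref{ThWPWestGlob} back to the Poincar\'e constant is exactly right.

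The genuine gap is in your passage to the limit in the variational identity. You take $\phi\in L^2([0,+\infty[;V(\Omega^*))$ and assert that $\phi|_{\Omega_m}\in L^2([0,+\infty[;V(\Omega_m))$, so that $F_m[u_m,\phi|_{\Omega_m}]=0$. This is not justified: the hypothesis of the theorem is only $\partial\Gamma_{D,\Omega_m}=\partial\Gamma_{D,\Omega}=\partial\Gamma_{D,\Omega^*}$, i.e.\ the $(n-2)$-dimensional boundaries of the Dirichlet pieces agree, \emph{not} the Dirichlet pieces themselves. In general $\Gamma_{D,\Omega_m}$ lies in the interior of $\Omega^*$, and a function with zero trace on $\Gamma_{D,\Omega^*}$ has no reason to vanish on $\Gamma_{D,\Omega_m}$. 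The same objection applies to your extension step: an $H^1$-extension of $\psi\in V(\Omega)$ to $\Omega^*$ need not land in $V(\Omega^*)$, since the extension operator does not see the Dirichlet constraint on $\Gamma_{D,\Omega^*}$.

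The paper repairs this by an explicit localisation. It introduces the increasing open sets $U_m=\bigcap_{i\ge m}\Omega_i\cap\Omega\uparrow\Omega$, a closed ``buffer'' set $W_m\subset\Omega^*$ bounded by $\Gamma_{Dir,U_m}\cup\Gamma_{D,\Omega^*}$, and the restricted test spaces
\[
V_m(\Omega^*)=\Big\{\phi\in L^2([0,+\infty[;H^1(\Omega^*)):\ \phi|_{U_m}\in L^2([0,+\infty[;V(U_m)),\ \phi\equiv 0\text{ on }W_m\Big\}.
\]
For $\phi\in V_M(\Omega^*)$ and $m\ge M$ one then does have $\phi|_{\Omega_m}\in L^2([0,+\infty[;V(\Omega_m))$, because $\phi$ is identically zero in the region separating the various Dirichlet surfaces; this is what makes $F_m[(E_{\mathbb{R}^2}u_m)|_{\Omega^*},\phi]=0$ legitimate before invoking Theorem~\ref{Mconv}. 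Finally, since $U_m\uparrow\Omega$, every $\phi\in L^2([0,+\infty[;V(\Omega))$ is approximated in $L^2([0,+\infty[;V(\Omega))$ by elements of $V_m(\Omega^*)$, which closes the argument. You should insert this construction (or an equivalent one) in place of the direct restriction/extension claims.
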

\begin{proof}
By the definitions of $u_m$ and $u$ respectively from Theorem~\ref{ThWPWestGlob} we have as a direct consequence that $u_m\in X(\Omega_m)$ and $u\in X(\Omega)$ are weak solutions in the sense of Remark~\ref{defweaksolutionKuz}. Therefore for all $\phi_1\in L^2([0,+\infty[;V(\Omega_m))$ and $\phi_2\in L^2([0,+\infty[;V(\Omega))$
$$F_m[u_m,\phi_1]=0\quad \hbox{and} \quad F[u,\phi_2]=0.$$
Extending with the help of Theorem~\ref{extR2prefract} we obtain
$$\Vert (E_{\mathbb{R}^2}u_m)\vert_{\Omega^*}\Vert_{H(\Omega^*)}\leq C \Vert u_m\Vert_{H(\Omega_m)}$$
with a constant $C>0$ independent on $m$.

By assumption, for $(\Omega_m)_{m\in\mathbb{N}}$ and $\Omega$ we have the same $\partial\Gamma_D$ fixed, and they are all $(\varepsilon,\delta)$-domains with fixed $\varepsilon$ and $\delta$. So we can apply Theorem~\ref{thmPoinc2}. 
After what we apply Theorems~\ref{thmlinfestL2},~\ref{dampedwaveeqregint1},~\ref{thmeqivcaudampwav},~\ref{ThWPWestGlob}. As in these theorems, the dependence of the constants on the domain only depends on the constant from the Poincar\'e's inequality, we obtain the existence of $r^*$ independent on $m$ in Theorem~\ref{ThWPWestGlob} such that if $r<r^*$ and
$$
\Vert f\Vert_{L^2([0,+\infty[;L^2(\Omega_m))} +\Vert \Delta u_{0,m}\Vert_{L^2(\Omega_m)}+\Vert u_{1,m}\Vert_{V(\Omega_m)}\leq \frac{\nu }{C_1}r,
$$
with $C_1>0$ independent on $m$, then
$$C \Vert u_m\Vert_{H(\Omega_m)} \leq \Vert u_m\Vert_{X(\Omega_m)}\leq 2r,$$
with $C>0$ independent on $m$.
Therefore,
$$\Vert (E_{\mathbb{R}^2}u_m)\vert_{\Omega^*}\Vert_{H(\Omega)}\leq K $$
with a constant $K>0$ independent on $m$, and consequently, there exits $u^*$ in $ H(\Omega^*)$ and a subsequence still denoted by $(E_{\mathbb{R}^2}u_m)\vert_{\Omega^*}$ such that
$$(E_{\mathbb{R}^2}u_m)\vert_{\Omega^*}\rightharpoonup u^*\quad \hbox{in}\quad H(\Omega^*).$$
Now for $m\in\mathbb{N}$ we define 
$$U_m:=\bigcap_{i=m}^{+\infty}\Omega_i\cap\Omega.$$
It is an increasing sequence of open sets with $U_m\uparrow\Omega$ for $m\to +\infty$.

We also define
$$V(U_m):=\lbrace u\in H^1(U_m)\vert\; \text{Tr} u=0\hbox{ on }\Gamma_{Dir,U_m}=(\cup_{i=m}^{\infty}\Gamma_{D,\Omega_i}\cup\Gamma_{D,\Omega})\cap \partial U_m\rbrace,$$
the closed set $W_m\subset\Omega^*$ such that $\partial W_m=\Gamma_{Dir,U_m}\cup\Gamma_{D,\Omega^*}$
and 
$$V_m(\Omega^*):=\lbrace \phi \in L^2([0,+\infty[;H^1(\Omega^*)\vert\;\;\; \phi\vert_{U_m}\in L^2([0,+\infty[;V(U_m))\hbox{ and }u=0\hbox{ on }W_m\rbrace.$$
Set 
$\phi\in V_M(\Omega^*)$, then for all $m\geq M$ $\phi\vert_{\Omega_m}\in L^2([0,+\infty[;V(\Omega_m)) $. Thus by Theorem~\ref{Mconv} we have 
$$0=F_m[(E_{\mathbb{R}^2}u_m)\vert_{\Omega^*},\phi]\rightarrow F[u^*,\phi].$$
Consequently for all $M\in \mathbb{N}$ and for all $\phi\in V_M(\Omega^*)$ 
$$F[u^*,\phi]=0.$$
But by definition of $U^m$ for $\phi\in L^2([0,+\infty[;V(\Omega))$ we can construct a sequence 
$$\phi_m\in V_m(\Omega^*)\subset L^2([0,+\infty[;V(\Omega)) $$ such that
$$ \phi_m\vert_{\Omega} \underset{m\rightarrow+\infty}{\rightarrow}\phi\hbox{ in }L^2([0,+\infty[,V(\Omega)).$$
Then for all $\phi\in L^2([0,+\infty[;V(\Omega))$
$$F[u^*,\phi]=0.$$
 By definition of $u_m$ we also have $u^*(0)=u_0$, $\Delta u^*(0)=\Delta u_0$ in $L^2(\Omega)$ and $\partial_t u^*(0)=u_1$ in $V(\Omega)$. 
Moreover 
$$u^*\in H(\Omega^*). $$
Thus we deduce $u^*\vert_{\Omega}=u$ which allows to conclude.
\end{proof}
\begin{remark}
Given the variational formulations~(\ref{Fnu}) and~(\ref{Fu}), it is also possible to consider the prefractal approximations not only for $\Gamma_{R,\Omega}$ but also for $\Gamma_{N,\Omega}$ and $\Gamma_{D,\Omega}$ simultaneously, which different fractals can describe. In this case, Theorem~\ref{Mconv} stays true, and we have an equivalent of Theorem~\ref{thmconv} with the help of Theorem~\ref{thmPoinc2} which ensures that the constants in the Poincar\'e's inequality can be taken independently on $m$. 
As particular examples in $\R^2$, Theorems~\ref{Mconv} and~\ref{thmconv} hold for the studied in Ref.~\cite{Capitanelli2} case of von Koch mixtures (see Appendix~\ref{subsecexKoch}) and for the Minkowski fractal.
\end{remark}
\section*{Acknowledgements}
 The authors are deeply grateful to Luke G. Rogers and the unanimous referee for many helpful comments. The additional thanks and warm thoughts are in memory of M. F. Sukhinin recently died. 

\appendix

 \section{Proof of Theorem~\ref{thmlinfestL2}}\label{AppDaners}
 As in Ref.~\cite{Daners} let us define for every $m\in \N^*$, $t\geq 1$ 
the function
\begin{equation}
G_{t,m}(\xi):=\left 
\lbrace\begin{array}{ll}
0 & \hbox{if }\xi\leq 0,\\

\xi^t & \hbox{if }\xi\in ]0,m[,\\
m^{t-1}u & \hbox{if }\xi\geq m,
\end{array}\right.
\end{equation}
which by its definition is piece wise smooth and has a bounded derivative. This implies that $G_{t,m}(u)\in V(\Omega)$ for $u\in V(\Omega)$ by Theorem 7.8 of Ref.~\cite{Gilbarg}. 
For some fixed $m\geq 1$ and $q\geq 2$ we introduce the following notations:
$$v:=G_{q-1,m}(u), \quad w:=G_{\frac{q}{2},m}(u).$$
Using again Theorem 7.8 in Ref.~\cite{Gilbarg} we obtain that
$$ \partial_{x_i}w\partial_{x_j}w=\left\{ \begin{array}{ll}
 \frac{q^2}{4(q-1)}\partial_{x_i}u\partial_{x_j}v, &\hbox{ if } u(x)\leq m\\
 \partial_{x_i}u\partial_{x_j}v, & \hbox{ if }u(x)\geq m.
 \end{array} \right.$$

Consequently we find 
\begin{align*}
\Vert \nabla w\Vert_{L^2(\Omega)}^2\leq & \frac{q^2}{4(q-1)} (\nabla u,\nabla v)_{L^2(\Omega)}\\
\leq & q [(\nabla u,\nabla v)_{L^2(\Omega)}+a\int_{\Gamma_R}Tr_{\Gamma_R}u\;Tr_{\Gamma_R}v dm_d]\\
\leq & q (f,v)_{L^2(\Omega)}\\
\leq & q \Vert f\Vert_{L^2(\Omega)} \Vert v\Vert_{L^2(\Omega)}.
\end{align*}
Using estimate~(\ref{L6est}) we obtain
$$\Vert w\Vert_{L^6(\Omega)}^2\leq C \Vert \nabla w\Vert_{L^2(\Omega)}^2 \leq C q \Vert f\Vert_{L^2(\Omega)} \Vert v\Vert_{L^2(\Omega)},$$
where $C>0$ depends only on $\Omega$ in the same way as in Proposition~\ref{propapl6}.
Then we use the fact that
$0\leq v\leq w^{\frac{2(q-1)}{q}} $ to deduce
\begin{equation}\label{Lqestimate}
\Vert w^{\frac{2}{q}}\Vert_{L^ {3q}(\Omega)}^q\leq C q \Vert f\Vert_{L^2(\Omega)} \Vert w^{\frac{2}{q}}\Vert_{L^{2(q-1)}(\Omega)}^{q-1}.
\end{equation}
Let us denote by $u^+$ and $u^-$ the positive and negative parts of $u$, $u^{\pm}:=\max(0,\pm u)$.
 The sequence of functions $w^{\frac{2}{q}}= [G_{\frac{q}{2},m}(u)]^{\frac{2}{q}}$ is increasing as $m$ increases and converges to $u^+$ as $m$ goes to infinity. Thus, if we take $\overline{u}=\frac{u^+}{M}$ with $M=C \Vert f\Vert_{L^2(\Omega)}$, from~(\ref{Lqestimate}) with the help of the monotone convergence theorem we have
\begin{equation}\label{Lqestimatebis}
\Vert \overline{u}\Vert_{L^ {3q}(\Omega)}^q\leq q \Vert \overline{u}\Vert_{L^{2(q-1)}(\Omega)}^{q-1}.
\end{equation}
We take $q_0=2$ and $q_{n+1}=1+ \eta q_n$ with $\eta=\frac{3}{2}$ for all $n\in\mathbb{N}$, what allows us thanks to estimate~(\ref{Lqestimatebis}) to find
$$\Vert \overline{u}\Vert_{L^ {3q_{n+1}}(\Omega)}^{q_{n+1}}\leq q_{n+1} \Vert \overline{u}\Vert_{L^{3q_n}(\Omega)}^{\eta q_n}.$$
From the last estimate we obtain by induction that
$$ \Vert \overline{u}\Vert_{L^ {3q_{n+1}}(\Omega)}\leq \left(\prod_{k=1}^{n+1} q_k^{\frac{\eta^{n+1-k}}{q_{n+1}}} \right)\Vert \overline{u}\Vert_{L^6(\Omega)}^{2\frac{\eta^{n+1}}{q_{n+1}}}.$$
As $\eta=\frac{3}{2} >1$ we see that $\eta\leq \frac{q_{n+1}}{q_n} \leq 2\eta $, which by induction implies that $q_{n+1}=4\eta^{n+1}-2$.
Consequently,
$$ \Vert \overline{u}\Vert_{L^ {3q_{n+1}}(\Omega)}\leq 2^{\sum_{k=1}^{n+1}\eta^{-k}} (2\eta)^{\frac{1}{2}\sum_{k=1}^{n+1}k\eta^{-k}}\Vert \overline{u}\Vert_{L^6(\Omega)}^{2\frac{\eta^{n+1}}{4\eta^{n+1}-2}}.$$
Since $\eta>1$ we can pass to the limit for $n\to +\infty$: 
$$\Vert \overline{u}\Vert_{L^{\infty}(\Omega)}\leq K \Vert \overline{u}\Vert_{L^6}^{\frac{1}{2}},$$
where $$K= 2^{\sum_{k=1}^{+\infty}\eta^{-k}} (2\eta)^{\frac{1}{2}\sum_{k=1}^{+\infty}k\eta^{-k}}<+\infty.$$
Taking into account that $$\Vert \overline{u}\Vert_{L^{\infty}(\Omega)}\leq K \vert \Omega\vert^{\frac{1}{12}}\Vert \overline{u}\Vert_{L^{\infty}(\Omega)}^{\frac{1}{2}},$$
we conclude in
$$\Vert \overline{u}\Vert_{L^{\infty}(\Omega)}\leq K^2 \vert \Omega\vert^{\frac{1}{6}}.$$
Finally, by definition of $\overline{u}$ we obtain 
$$\Vert u^+\Vert_{L^{\infty}(\Omega)}\leq C \Vert f\Vert_{L^2(\Omega)},$$
where $C>0$ depends only on $\Omega$ in the same way as in Proposition~\ref{propapl6}.
As $u^-=(-u)^+$, and by linearity $-u$ is the solution of the Poisson problem~(\ref{PoissonDir1}) with $f$ replaced by $-f$, then we also have 
$$\Vert u^-\Vert_{L^{\infty}(\Omega)}\leq C \Vert f\Vert_{L^2(\Omega)},$$
which finishes the proof.
 
\section{Scale irregular Koch curves and the Strong Open Set Condition}\label{subsecexKoch}
 Koch mixtures~\cite{Capitanelli2} can give a typical example of a self-similar fractal boundary in $\R^2$. %

We recall briefly some notations introduced in Section $2$ page 1223 of Ref.~\cite{Capitanelli2} for scale irregular Koch curves built on two families of contractive similitudes. Let $\mathcal{B}=\lbrace 1,2\rbrace$: for $a\in \mathcal{B}$ let $2<l_a<4$, and for each $a\in \mathcal{B}$ let
$$\Psi^{(a)}=\lbrace \psi_1^{(a)},\ldots, \psi_4^{(a)}\rbrace$$
be the family of contractive similitudes $\psi_i^{(a)}:\mathbb{C}\rightarrow \mathbb{C}$, $i=1, \ldots, 4$, with contraction factor $l_a^{-1}$ defined in Ref.~\cite{Capitanelli}.

Let $\Xi=\mathcal{B}^{\mathbb{N}}$; we call $\xi\in \Xi$ an environnent. We define the usual left shift $S$ on $\Xi$. For $\mathcal{O}\subset \mathbb{R}^2$, we set 
$$\Phi^{(a)}(\mathcal{O})=\bigcup_{i=1}^4 \psi_i^{(a)}(\mathcal{O})$$
and
$$\Phi^{(\xi)}_m(\mathcal{O})=\Phi^{(\xi_1)}\circ\cdots\circ\Phi^{(\xi_m)}(\mathcal{O}).$$
Let $K$ be the line segment of unit length with $A=(0,0)$ and $B=(1,0)$ as end points. We set, for each $m$ in $\mathbb{N}$, 
$$K^{(\xi),m}=\Phi^{(\xi)}_m(K).$$
 $K^{(\xi),m}$ is the so-called $m$-th prefractal curve.
The fractal $K^{(\xi)}$ associated with the environment sequence $\xi$ is defined by
$$K^{(\xi)}=\overline{\bigcup_{m=1}^{+\infty}\Phi^{(\xi)}_m(\Gamma)},$$
where $\Gamma=\lbrace A,B\rbrace$. For $\xi\in \Xi$, we set $i\vert m=(i_1,\ldots,i_m)$ and $\psi_{i\vert m}=\psi_{i_1}^{(\xi_1)}\circ \cdots\circ \psi_{i_m}^{(\xi_m)}$. We define the volume measure $\mu^{(\xi)}$ as the unique Radon measure on $K^{(\xi)}$ such that
$$\mu^{(\xi)}(\psi_{i\vert m}(K^{(S^m\xi)}))=\frac{1}{4^m}$$
(see Section 2 in Ref.~\cite{Barlow}) as, for each $a\in \mathcal{B}$, the family $\Phi^{(a)}$ has $4$ contractive similitudes.

The fractal set $K^{(\xi)}$ and the volume measure $\mu^{(\xi)}$ depend on the oscillations in the environment sequence $\xi$. We denote by $h_a^{(\xi)}(m)$ the frequency of the occurrence of $a$ in the finite sequence $\xi \vert m$, $m\geq 1$:
$$h_a^{(\xi)}(m)=\frac{1}{m}\sum_{i=1}^m 1_{\lbrace\xi_i=a\rbrace}, \hbox{ }a=1,2.$$
Let $p_a$ be a probability distribution on $\mathcal{B}$, and suppose that $\xi$ satisfies
$$h_a^{(\xi)}(m)\underset{m\rightarrow +\infty}{\longrightarrow} p_a ,$$
(where $0\leq p_a\leq 1,$ $p_1+p_2=1$) and
$$\vert h_a^{(\xi)}(m)-p_a\vert\leq\frac{C_0}{m}, \hbox{ }a=1,2,\hbox{ }(n\geq 1),$$
with some constant $C_0\geq 1$, that is, we consider the case of the fastest convergence of the occurrence factors.

Under these conditions, the measure $\mu^{(\xi)}$ has the property that there exist two positive constants $C_1$, $C_2$, such that (see Refs.~\cite{Mosco2,Mosco3}),
$$C_1 r^{d^{(\xi)}}\leq \mu^{(\xi)}(K^{(\xi)}\cap B_r(x))\leq C_2 r^{d^{(\xi)}} \;\;\;\hbox{for all } x\in K^{(\xi)},0<r\leq1,$$
where $B_r(x)\subset \mathbb{R}^2$ denotes the Euclidean ball of radius $r$ and centered at $x$ with
$$d^{(\xi)}=\frac{\ln 4}{p_1 \ln p_1+p_2 \ln p_2} .$$
According to Definition \ref{defdset}, it means that $K^{(\xi)}$ is a $ d^{(\xi)}$-set and the measure $\mu^{(\xi)}
$ is a $d^{(\xi)}-$ dimensional measure equivalent to the $d^{(\xi)}$-dimensional Hausdorff measure $m_{d^{(\xi)}}$.

 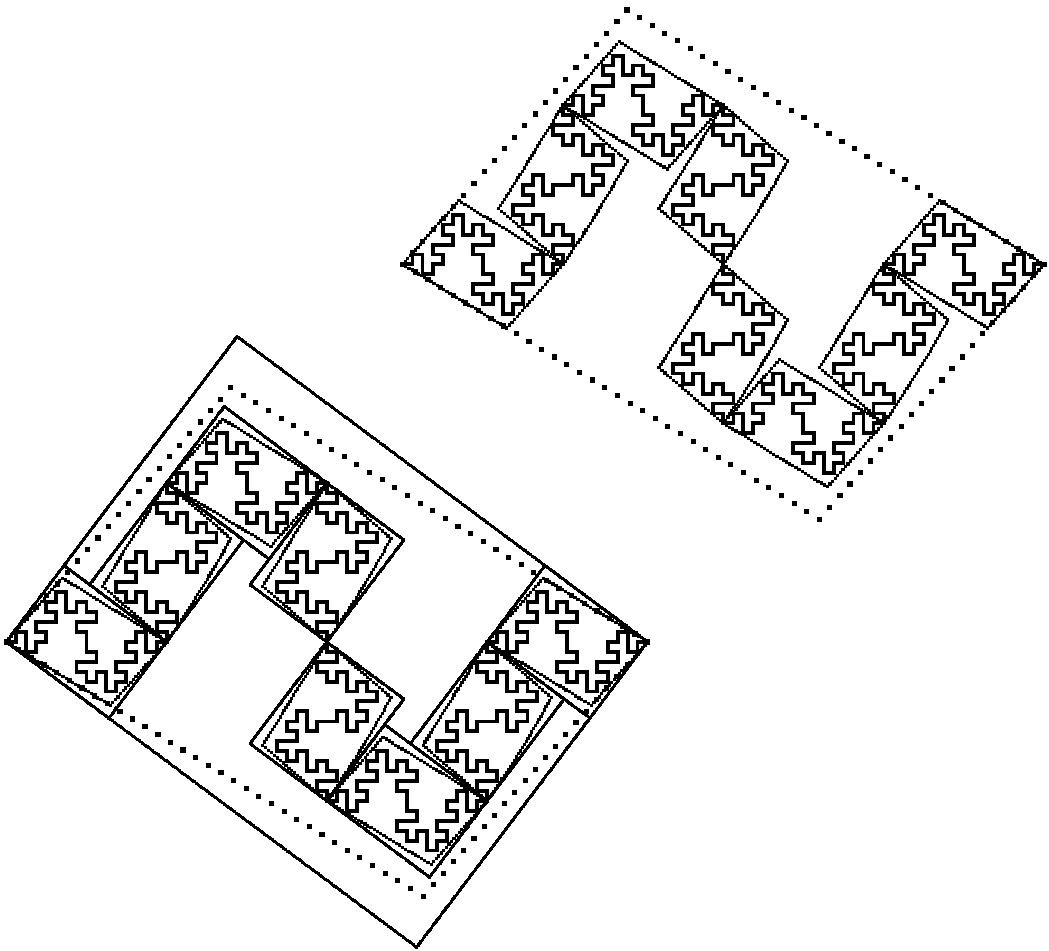
\begin{figure}[htb!]
 \ \hfill \begin{picture}(250,190)(0,-95)
 %
 \def\fh{\linethickness{1.0pt}\setlength{\unitlength}{0.375pt}\qbezier(0,0)(5,0)(10,0)
 \qbezier(10,0)(10,5)(10,10)
 \qbezier(10,10)(15,10)(20,10)
 \qbezier(20,10)(20,0)(20,-10)
 \qbezier(20,-10)(25,-10)(30,-10)
 \qbezier(30,-10)(30,-5)(30,0)
 \qbezier(30,0)(35,0)(40,0)
 \qbezier(0,0)(5,0)(10,0)}
 \def\fv{\linethickness{1.0pt}\setlength{\unitlength}{0.375pt}\qbezier(0,0)(0,5)(0,10)
 \qbezier(0,10)(-5,10)(-10,10)
 \qbezier(-10,10)(-10,15)(-10,20)
 \qbezier(-10,20)(00,20)(10,20)
 \qbezier(10,20)(10,25)(10,30)
 \qbezier(10,30)(5,30)(0,30)
 \qbezier(0,30)(0,35)(0,40)}
 \def\ffh{\linethickness{1.0pt}\setlength{\unitlength}{0.375pt}
 \put(00,00){\fh}
 \put(40,00){\fv}
 \put(40,40){\fh}
 \put(80,00){\fv}
 \put(80,-40){\fv}
 \put(120,-40){\fv}
 \put(80,-40){\fh}
 \put(120,00){\fh}}
 \def\ffv{\linethickness{1.0pt}\setlength{\unitlength}{0.375pt}
 \put(00,00){\fv}
 \put(-40,40){\fh}
 \put(-40,40){\fv}
 \put(-40,80){\fh}
 \put(00,80){\fh}
 \put(00,120){\fh}
 \put(40,80){\fv}
 \put(00,120){\fv}}
 \setlength{\unitlength}{1.5pt}
 \linethickness{1.5pt}
 \put(00,00){
 {\ffh}
 \linethickness{.75pt}
 \qbezier[22](0,0)(7,8)(14,16)
 \qbezier[33](40,0)(27,8)(14,16)
 \qbezier[22](40,0)(33,-8)(26,-16)
 \qbezier[33](0,0)(13,-8)(26,-16)
 }
 \put(40,00){
 {\ffv}
 \linethickness{.75pt}
 \qbezier[22](0,0)(-8,7)(-16,14)
 \qbezier[33](0,40)(-8,27)(-16,14)
 \qbezier[22](0,40)(8,33)(16,26)
 \qbezier[33](0,0)(9,13)(16,26)
 }
 \put(40,40){
 {\ffh}
 \linethickness{.75pt}
 \qbezier[22](0,0)(7,8)(14,16)
 \qbezier[33](40,0)(27,8)(14,16)
 \qbezier[22](40,0)(33,-8)(26,-16)
 \qbezier[33](0,0)(13,-8)(26,-16)
 }
 \put(80,00){
 {\ffv}
 \linethickness{.75pt}
 \qbezier[22](0,0)(-8,7)(-16,14)
 \qbezier[33](0,40)(-8,27)(-16,14)
 \qbezier[22](0,40)(8,33)(16,26)
 \qbezier[33](0,0)(9,13)(16,26)
 }
 \put(80,-40){
 {\ffv}
 \linethickness{.75pt}
 \qbezier[22](0,0)(-8,7)(-16,14)
 \qbezier[33](0,40)(-8,27)(-16,14)
 \qbezier[22](0,40)(8,33)(16,26)
 \qbezier[33](0,0)(9,13)(16,26)
 }
 \put(120,-40){
 {\ffv}
 \linethickness{.75pt}
 \qbezier[22](0,0)(-8,7)(-16,14)
 \qbezier[33](0,40)(-8,27)(-16,14)
 \qbezier[22](0,40)(8,33)(16,26)
 \qbezier[33](0,0)(9,13)(16,26)
 }
 \put(80,-40){
 {\ffh}
 \linethickness{.75pt}
 \qbezier[22](0,0)(7,8)(14,16)
 \qbezier[33](40,0)(27,8)(14,16)
 \qbezier[22](40,0)(33,-8)(26,-16)
 \qbezier[33](0,0)(13,-8)(26,-16)
 }
 \put(120,00){
 {\ffh}
 \linethickness{.75pt}
 \qbezier[22](0,0)(7,8)(14,16)
 \qbezier[33](40,0)(27,8)(14,16)
 \qbezier[22](40,0)(33,-8)(26,-16)
 \qbezier[33](0,0)(13,-8)(26,-16)
 }
 \setlength{\unitlength}{6pt}
 \qbezier[22](0,0)(7,8)(14,16)
 \qbezier[33](40,0)(27,8)(14,16)
 \qbezier[22](40,0)(33,-8)(26,-16)
 \qbezier[33](0,0)(13,-8)(26,-16)
 \end{picture}
 
 \vskip-15ex
 
 \begin{picture}(250,222)(0,-111) %
 \def\fh{\linethickness{1.0pt}\setlength{\unitlength}{0.375pt}\qbezier(0,0)(5,0)(10,0)
 \qbezier(10,0)(10,5)(10,10)
 \qbezier(10,10)(15,10)(20,10)
 \qbezier(20,10)(20,0)(20,-10)
 \qbezier(20,-10)(25,-10)(30,-10)
 \qbezier(30,-10)(30,-5)(30,0)
 \qbezier(30,0)(35,0)(40,0)
 \qbezier(0,0)(5,0)(10,0)}
 \def\fv{\linethickness{1.0pt}\setlength{\unitlength}{0.375pt}\qbezier(0,0)(0,5)(0,10)
 \qbezier(0,10)(-5,10)(-10,10)
 \qbezier(-10,10)(-10,15)(-10,20)
 \qbezier(-10,20)(00,20)(10,20)
 \qbezier(10,20)(10,25)(10,30)
 \qbezier(10,30)(5,30)(0,30)
 \qbezier(0,30)(0,35)(0,40)}
 \def\ffh{\linethickness{1.0pt}\setlength{\unitlength}{0.375pt}
 \put(00,00){\fh}
 \put(40,00){\fv}
 \put(40,40){\fh}
 \put(80,00){\fv}
 \put(80,-40){\fv}
 \put(120,-40){\fv}
 \put(80,-40){\fh}
 \put(120,00){\fh}}
 \def\ffv{\linethickness{1.0pt}\setlength{\unitlength}{0.375pt}
 \put(00,00){\fv}
 \put(-40,40){\fh}
 \put(-40,40){\fv}
 \put(-40,80){\fh}
 \put(00,80){\fh}
 \put(00,120){\fh}
 \put(40,80){\fv}
 \put(00,120){\fv}}
 \setlength{\unitlength}{1.5pt}
 \linethickness{1.5pt}
 \put(00,00){
 {\ffh}
 \linethickness{.75pt}
 \qbezier[22](0,0)(7,8)(14,16)
 \qbezier[33](40,0)(27,8)(14,16)
 \qbezier[22](40,0)(33,-8)(26,-16)
 \qbezier[33](0,0)(13,-8)(26,-16)
 \linethickness{.5pt}\setlength{\unitlength}{12pt}
 \qbezier(0,0)(0.9,1.2)(1.8,2.4)
 \qbezier(5,0)(3.4,1.2)(1.8,2.4)
 \qbezier(0,0)(1.6,-1.2)(3.2,-2.4)
 \qbezier(5,0)(4.1,-1.2)(3.2,-2.4)
 }
 \put(40,00){
 {\ffv}
 \linethickness{.75pt}
 \qbezier[22](0,0)(-8,7)(-16,14)
 \qbezier[33](0,40)(-8,27)(-16,14)
 \qbezier[22](0,40)(8,33)(16,26)
 \qbezier[33](0,0)(9,13)(16,26)
 \linethickness{.5pt}\setlength{\unitlength}{12pt}
 \qbezier(0,0)(-1.2,0.9)(-2.4,1.8)
 \qbezier(0,5)(-1.2,3.4)(-2.4,1.8)
 \qbezier(0,0)(1.2,1.6)(2.4,3.2)
 \qbezier(0,5)(1.2,4.1)(2.4,3.2)
 }
 \put(40,40){
 {\ffh}
 \linethickness{.75pt}
 \qbezier[22](0,0)(7,8)(14,16)
 \qbezier[33](40,0)(27,8)(14,16)
 \qbezier[22](40,0)(33,-8)(26,-16)
 \qbezier[33](0,0)(13,-8)(26,-16)
 \linethickness{.5pt}\setlength{\unitlength}{12pt}
 \qbezier(0,0)(0.9,1.2)(1.8,2.4)
 \qbezier(5,0)(3.4,1.2)(1.8,2.4)
 \qbezier(0,0)(1.6,-1.2)(3.2,-2.4)
 \qbezier(5,0)(4.1,-1.2)(3.2,-2.4)
 }
 \put(80,00){
 {\ffv}
 \linethickness{.75pt}
 \qbezier[22](0,0)(-8,7)(-16,14)
 \qbezier[33](0,40)(-8,27)(-16,14)
 \qbezier[22](0,40)(8,33)(16,26)
 \qbezier[33](0,0)(9,13)(16,26)
 \linethickness{.5pt}\setlength{\unitlength}{12pt}
 \qbezier(0,0)(-1.2,0.9)(-2.4,1.8)
 \qbezier(0,5)(-1.2,3.4)(-2.4,1.8)
 \qbezier(0,0)(1.2,1.6)(2.4,3.2)
 \qbezier(0,5)(1.2,4.1)(2.4,3.2)
 }
 \put(80,-40){
 {\ffv}
 \linethickness{.75pt}
 \qbezier[22](0,0)(-8,7)(-16,14)
 \qbezier[33](0,40)(-8,27)(-16,14)
 \qbezier[22](0,40)(8,33)(16,26)
 \qbezier[33](0,0)(9,13)(16,26)
 \linethickness{.5pt}\setlength{\unitlength}{12pt}
 \qbezier(0,0)(-1.2,0.9)(-2.4,1.8)
 \qbezier(0,5)(-1.2,3.4)(-2.4,1.8)
 \qbezier(0,0)(1.2,1.6)(2.4,3.2)
 \qbezier(0,5)(1.2,4.1)(2.4,3.2)
 }
 \put(120,-40){
 {\ffv}
 \linethickness{.75pt}
 \qbezier[22](0,0)(-8,7)(-16,14)
 \qbezier[33](0,40)(-8,27)(-16,14)
 \qbezier[22](0,40)(8,33)(16,26)
 \qbezier[33](0,0)(9,13)(16,26)
 \linethickness{.5pt}\setlength{\unitlength}{12pt}
 \qbezier(0,0)(-1.2,0.9)(-2.4,1.8)
 \qbezier(0,5)(-1.2,3.4)(-2.4,1.8)
 \qbezier(0,0)(1.2,1.6)(2.4,3.2)
 \qbezier(0,5)(1.2,4.1)(2.4,3.2)
 }
 \put(80,-40){
 {\ffh}
 \linethickness{.75pt}
 \qbezier[22](0,0)(7,8)(14,16)
 \qbezier[33](40,0)(27,8)(14,16)
 \qbezier[22](40,0)(33,-8)(26,-16)
 \qbezier[33](0,0)(13,-8)(26,-16)
 \linethickness{.5pt}\setlength{\unitlength}{12pt}
 \qbezier(0,0)(0.9,1.2)(1.8,2.4)
 \qbezier(5,0)(3.4,1.2)(1.8,2.4)
 \qbezier(0,0)(1.6,-1.2)(3.2,-2.4)
 \qbezier(5,0)(4.1,-1.2)(3.2,-2.4)
 }
 \put(120,00){
 {\ffh}
 \linethickness{.75pt}
 \qbezier[22](0,0)(7,8)(14,16)
 \qbezier[33](40,0)(27,8)(14,16)
 \qbezier[22](40,0)(33,-8)(26,-16)
 \qbezier[33](0,0)(13,-8)(26,-16)
 \linethickness{.5pt}\setlength{\unitlength}{12pt}
 \qbezier(0,0)(0.9,1.2)(1.8,2.4)
 \qbezier(5,0)(3.4,1.2)(1.8,2.4)
 \qbezier(0,0)(1.6,-1.2)(3.2,-2.4)
 \qbezier(5,0)(4.1,-1.2)(3.2,-2.4)
 }
 \setlength{\unitlength}{6pt}
 \qbezier[22](0,0)(7,8)(14,16)
 \qbezier[33](40,0)(27,8)(14,16)
 \qbezier[22](40,0)(33,-8)(26,-16)
 \qbezier[33](0,0)(13,-8)(26,-16)
 \linethickness{.5pt}\setlength{\unitlength}{48pt}
 \qbezier(0,0)(0.9,1.2)(1.8,2.4)
 \qbezier(5,0)(3.4,1.2)(1.8,2.4)
 \qbezier(0,0)(1.6,-1.2)(3.2,-2.4)
 \qbezier(5,0)(4.1,-1.2)(3.2,-2.4)
 \end{picture}
 \caption{An illustration for the Open Set Condition in the case of the square Koch curve, also called the Minkowski fractal. The thick dotted line outlines the set $\mathcal{O}$, which is called the 0-cell. The thin dotted lines outlines the 
 open sets in $\Phi_1(\mathcal{O})$, which are called 1-cells. 
 The bottom picture illustrates the stronger form of the Open Set Condition used in Conjecture~\ref{con-sOSC}: the thin solid lines outline the 
 open sets $ \mathcal{O}'$ and $\Phi_1(\mathcal{O}')$. 
 }\label{fig-OSC}
 \end{figure}
 
 We now discuss the more general set-up in Subsection~\ref{subsecOSC}. 
 The standard Open Set Condition \cite[Section 9.2]{Falconer} is satisfied for an iterated function system if there is a non-empty bounded open set $O$ such that $\Phi_m(O)\subset O$ with the union in the left-hand side disjoint. Note that the open set $O$ may not be unique. 
 Conjecture~\ref{con-sOSC} assumes The Fractal Self-Similar Face Condition (Assumption \ref{a-face}) and a strong version of the Open Set Condition, see Figure~\ref{fig-OSC}, that we introduce as follows.

 \begin{assumption}[A Strong Open Set Condition]\label{a-OSC}
 We assume the Open Set Condition for the sequence $\Phi_m$ 
 is satisfied with 
 two different convex open polygons $\mathcal{O}\subsetneqq\mathcal{O}'$,
 not depending on $m$, 
 such that 
 $$\partial {\mathcal{O}}\cap K_0=\partial {\mathcal{O}}'\cap K_0=\partial {\mathcal{O}}\cap 
 \partial {\mathcal{O}}'=\partial_{(n-2)}K_0.$$
 \end{assumption} 
 
 \begin{conjecture}\label{con-sOSC}:
 \emph{If Assumptions~\ref{a-face} and \ref{a-OSC} are satisfied, 
 then $\Omega_m$ and $\Omega$ are 
 uniformly 
 exterior and interior 
 $(\epsilon,\infty)$-domains, that is, $\epsilon$ does not depend on $m$.}
 \end{conjecture}
 
 One possible approach to this conjecture, following Definition~\ref{DefEDD} of an $(\eps,\delta)$-domain from \cite{JONES-1981} and \cite[Remark 1]{HINZ-2021-1}, condition (ii) is equivalent to saying that the \emph{$\frac{1}{\varepsilon}$-cigar} 
 \begin{equation}\label{E:cigar}
 C(\gamma,\varepsilon):=\bigcup_{z\in\gamma} B(z,\varepsilon\lambda(z)),\quad \text{where}\quad \lambda(z)=|x-z|\frac{|y-z|}{|x-y|},\quad z\in\gamma,
 \end{equation}
 is contained in $\Omega$. 
 Another possible approach to this conjecture is by the recent result~\cite[Theorem 2.15]{AHMNT} (see also~\cite[Appendix A]{AHMNT}), 
 it is enough to prove the interior and exterior NTA conditions with uniform constants (we do not provide here the definition of an NTA domain, see \cite{JERISON-1982} or  \cite{AHMNT}). 
 By~\cite[Definition 2.12]{AHMNT}, we need to verify the Corkscrew condition~\cite[Definition 2.10]{AHMNT}
 and 
 the Harnack chain condition~\cite[Definition 2.12]{AHMNT}
 with constants not depending on $m$.
 The Corkscrew condition, both exterior and interior, is immediately implied by the self-similarity and the Strong Open Set Condition. The essential arguments in the proof of 
 the Harnack chain condition
 are similar to those in Ref.~\cite{ALM2003}, where the reader can find background and detailed explanations of the techniques. 
 
 In the two dimensional case there are 
 more straightforward arguments 
 to show that polygonal approximations to a self-similar curve bound uniformly $(\eps,\infty)$-domains. 
 Such arguments can be based on the Ahlfors three point condition, see~\cite[page 73]{JONES-1981}. 

\def\refname{References}
\bibliographystyle{siam}
\bibliography{ref}
\end{document}